\newtheorem*{theorem*}{Theorem}
\newtheorem*{conjecture*}{Conjecture}
\newtheorem*{proposition*}{Proposition}
\titleformat*{\section}{\large\bfseries\sc\filcenter}
\titleformat*{\subsection}{\normalfont\bfseries\sc\filcenter}
\newcounter{itemC}
\NewDocumentCommand{\printNum}{s}{%
    \stepcounter{itemC}
    \IfBooleanTF{#1}%
    {%
        \theitemC.\makebox[0pt]{*}%
    }{%
        \theitemC.%
    }%
    }
 \definecolor{shadecolor}{RGB}{210,210,210}
\tikzstyle{decision} = [diamond, draw, text width=4.5em, text badly centered, node distance=3cm, inner sep=0pt]
\tikzstyle{block} = [rectangle, draw, text width=5em, text centered, rounded corners, minimum height=4em]
\theoremstyle{plain} \numberwithin{equation}{section}
\newtheorem{theorem}{Theorem}[section]
\newtheorem{corollary}[theorem]{Corollary}
\newtheorem{conjecture}{Conjecture}
\newtheorem{problem}{Problem}
\newtheorem{lemma}[theorem]{Lemma}
\newtheorem{proposition}[theorem]{Proposition}
\theoremstyle{definition}
\newtheorem{definition}[theorem]{Definition}
\newtheorem{remark}[theorem]{Remark}
\newcommand{\be}{\begin{enumerate}}
\newcommand{\ee}{\end{enumerate}}
\newcommand{\R}{\mathbb{R}}
\newcommand{\Z}{\mathbb{Z}}
\newcommand*{\fullref}[1]{\hyperref[{#1}]{\Cref*{#1}}}
\renewcommand\footnotemark{}
\begin{document}
\title{\large{\textbf{Bounds in Simple Hexagonal Lattice and Classification of 11-stick Knots}}\thanks{This work was done as part of the \href{https://geometrynyc.wixsite.com/polymathreu}{Polymath Jr. Program} in summer 2022, advised by Dr. Marion Campisi and Nicholas Cazet.}
}

\author{
        \makebox[.25\textwidth]{Yueheng Bao}\\ \small University of Science and \\ \small Technology of China
        \and 
        \makebox[.25\textwidth]{Ari Benveniste}\\ \small Pomona College
        \and 
        \hfill\makebox[.25\textwidth]{Marion Campisi}\\ \small San Jose State University
        \and
        \makebox[.4\textwidth]{Nicholas Cazet}\\ \small University of California, Davis
        \and
        \hfill\makebox[.4\textwidth]{Ansel Goh}\\ \small University of Washington
        \and
        \makebox[.4\textwidth]{Jiantong Liu}\\ \small University of California,\\\small Los Angeles
        \and
        \hfill\makebox[.4\textwidth]{Ethan Sherman}\\ \small Vanderbilt University
}

\usethanksrule
    \maketitle
    
    \begin{abstract}
        The \textit{stick number} and the \textit{edge length} of a knot type in the simple hexagonal lattice (sh-lattice) are the minimal numbers of sticks and edges required, respectively, to construct a knot of the given type in sh-lattice. By introducing a linear transformation between lattices, we prove that for any given knot both values in the sh-lattice are strictly less than the values in the cubic lattice. Finally, we show that the only non-trivial $11$-stick knots in the sh-lattice are the trefoil knot ($3_1$) and the figure-eight knot ($4_1$).
    \end{abstract}

\section{Introduction}
Knots in the cubic lattice have been studied more thoroughly compared to knots in the simple hexagonal lattice because of the simplicity of the former lattice's structure. Some examples of the work done about the knots in the cubic lattice include \cite{adams2012stick}, \cite{diao1994number}, \cite{huang2017lattice}, \cite{huh2010knots}, \cite{huh2005lattice}, and \cite{scharein2009bounds}. Therefore, it is reasonable to examine the properties of the simple hexagonal lattice as well as their relationships with analogous properties in the cubic lattice. For example, \cite{bailey2015stick} has done some work in this direction. 

We are able to show the following results regarding stick numbers and edge lengths, two properties involved in the construction of a knot in a lattice. 

\begin{theorem*}
For any knot type $[K]$, $s_{sh}[K] < s_{L}[K]$, where $s_L$ and $s_{sh}$ are the stick numbers of $[K]$ in the cubic lattice and in the simple hexagonal lattice, respectively. 
\end{theorem*}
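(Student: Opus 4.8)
The plan is to reduce everything to the cubic lattice by a single explicit linear map, and then to shave off one stick.

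First I would fix coordinates in which the simple hexagonal lattice is the product $\Lambda_{\triangle}\times\mathbb{Z}$ of a planar triangular lattice with a vertical line; then its sticks occur in exactly four directions — the three coplanar ``horizontal'' directions $60^\circ$ apart that generate each triangular layer, and the one vertical direction. I would write down a linear isomorphism $T\colon\mathbb{R}^3\to\mathbb{R}^3$ with $T(\mathbb{Z}^3)\subseteq\Lambda_{sh}$, sending $e_1,e_2$ to two of the three horizontal sh-directions and $e_3$ to the vertical one. Since $T$ is invertible it is injective on the set of sticks and never carries two perpendicular cubic sticks onto collinear segments, so the image of any cubic stick-presentation is an sh stick-presentation of the same knot type with the same number of sticks. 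Applying this to a minimal cubic presentation $P$ of $[K]$ (with $s_{L}[K]$ sticks) gives an sh-presentation of $[K]$ with $s_{L}[K]$ sticks, hence $s_{sh}[K]\le s_{L}[K]$.

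For the strict inequality I would argue that $T(P)$ is never minimal in the sh-lattice. The key point is that $T(P)$ uses only three of the four available stick directions: the fourth horizontal direction $u$ (the one not in $\{T(e_1),T(e_2)\}$) is unused, and it lies in the same horizontal planes as $T(e_1)$ and $T(e_2)$. I would locate a vertex $v$ of $P$ at which an $e_1$-stick meets an $e_2$-stick — such a vertex exists because the multigraph on $\{1,2,3\}$ recording which stick-directions are consecutive along $P$ is connected on the (at least three) directions used by $P$, hence contains an edge between the two directions we have chosen to send into the horizontal plane, after relabeling/rechoosing $T$ if necessary — and then perform a local rerouting near $v$ that trades the two sticks abutting $v$, together with at most one further adjacent stick, for a shorter chain one of whose segments runs along $u$. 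Verifying that this local move lowers the stick count by at least one, and does not change the knot type, finishes the proof.

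The main obstacle is exactly this last step: showing the rerouting can be done without introducing self-intersections, i.e.\ that the planar region swept between the old corner and the new chain is disjoint from the rest of $T(P)$. A direct ``corner cut'' is a lattice move only when the two sticks at $v$ happen to have equal length, and any rerouting that stays in the plane of $T(e_1),T(e_2)$ has a cubic analogue and is therefore already blocked by minimality of $P$; so the real content is to pick $v$ at an extreme vertex of $T(P)$ (for instance one of maximal height, or a vertex on the boundary of the convex hull of its triangular layer, where one arm of the corner is unobstructed) and to route through the fourth direction $u$ in a way that has \emph{no} counterpart inside the cubic lattice, so that minimality of $P$ does not forbid it. I expect this to require a short case analysis on the local stick pattern around $v$; the rest of the argument is routine.
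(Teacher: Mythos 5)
Your overall strategy is the same as the paper's: use the linear map $T$ sending the cubic lattice onto the sh-lattice to get $s_{sh}[K]\le s_L[K]$, and then gain one stick by converting an $x$--$y$ corner into a stick in the fourth (unused) horizontal direction. However, the step you explicitly defer --- showing that the triangular region swept by the corner move is disjoint from the rest of the knot --- is precisely the substantive content of the theorem, and your proposed fix does not close it. Choosing a corner at an extreme vertex (maximal height, or on the convex hull of its layer) does not guarantee an unobstructed sweep: even in the topmost level, endpoints of vertical sticks terminate in that plane and other $x$- and $y$-sticks of the same level arc can sit inside the triangle, so ``one arm of the corner is unobstructed'' is not enough. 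You also correctly note that the corner cut is only a lattice move when the two legs have equal length, but you do not say how to arrange this. So as written the argument establishes only the weak inequality plus a program for the strict one.

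The paper resolves both issues constructively rather than by choosing a special corner. First, it equalizes the legs by lengthening all $x$-sticks on one side of the relevant level by the difference of the two leg lengths, which changes no stick count and no knot type. Second, if the resulting isosceles triangle contains obstructions ($z$-sticks or endpoints of $x$- or $y$-sticks), it applies an explicit piecewise-affine ``squeeze'' in the $y$-direction, $y' = b_y - p_x + \tfrac{p_x}{b_y}\,y$ on the strip $0<y<b_y$, which pushes every obstructing point above the hypotenuse while preserving the knot type, and then rescales by $b_y$ to return to integer coordinates. Only then does it apply $T$ and the corner-to-$z$-stick lemma (its Lemma 4.3) to drop the stick count by one. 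If you want to salvage your approach, you would need either to prove an analogue of this obstruction-clearing isotopy or to give the promised case analysis in full; without it the strict inequality is not established.
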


\begin{theorem*}
For any knot type $[K]$, $e_{sh}[K] < e_{L}[K]$, where $e_L$ and $s_{sh}$ are the edge lengths of $[K]$ in the cubic lattice and in the simple hexagonal lattice, respectively.
\end{theorem*}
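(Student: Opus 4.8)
The plan is to transport a cheapest cubic representative of $[K]$ into the sh-lattice by a single linear map and then save one edge by short-cutting a corner. Realize the sh-lattice as generated by $a_1=(1,0,0)$, $a_2=(\tfrac12,\tfrac{\sqrt 3}{2},0)$, $a_3=(0,0,1)$, so its unit edges are the directions $\pm a_1,\pm a_2,\pm(a_1-a_2),\pm a_3$; the one algebraic fact driving the argument is that the linear map $T$ with $T(e_i)=a_i$ carries the cubic diagonal $e_1-e_2$ to $a_1-a_2$, which is again a \emph{unit} sh-edge. The aim is therefore to locate, inside a minimum-length cubic representative of $[K]$, a corner whose two edges point along $e_1$ and $-e_2$.

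The combinatorial input I would establish first: every simple closed lattice polygon $C\subset\Z^3$ has a \emph{good corner}, meaning two consecutive edges whose direction vectors are $+e_i$ and $-e_j$ for some distinct axes $i\neq j$ (possibly after relabelling axes). Orient $C$ and write each edge direction as $s_m e_{i_m}$ with $s_m\in\{\pm1\}$; two consecutive edges either coincide (a straight stretch) or lie on different axes (a corner), the remaining possibility being an immediate back-track, which a simple polygon cannot have. If every corner had $s_m s_{m+1}=+1$ then $s_m=s_{m+1}$ at every step, forcing all the $s_m$ equal around the cycle; but then each coordinate is monotone along $C$ and hence constant, leaving $C$ with no edges --- impossible. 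So a good corner exists; apply this to a cubic representative $C$ of $[K]$ with $e_L[K]$ edges.

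Next, relabel the axes so the good corner of $C$ uses axes $1$ and $2$ with successive edge directions $e_1$ then $-e_2$, and orient the third axis so that $(e_1,e_2,e_3)$ is positively oriented; then the map $T$ with $T(e_1)=a_1$, $T(e_2)=a_2$, $T(e_3)=\pm a_3$ (sign chosen so that $\det T>0$) is an injective linear map sending $\Z^3$ into the sh-lattice and each unit cubic edge to a unit sh-edge. Since $\det T>0$, $T$ is ambient isotopic to the identity, so $T(C)$ is a conformation of $[K]$ in the sh-lattice with $e_L[K]$ edges, running along $a_1$ and then $-a_2$ near the image of the good corner. Replacing those two edges by the single edge in direction $a_1-a_2=T(e_1-e_2)$ --- a genuine unit sh-edge --- yields a simple closed lattice polygon $C'$ in the sh-lattice with exactly $e_L[K]-1$ edges.

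Finally one must confirm that $C'$ still has type $[K]$, and this is the only step needing real care. The replacement sweeps across the solid triangle $\triangle$ cobounded by the two old edges and the new one; since $T$ is a homeomorphism it suffices to show that the corresponding cubic triangle (vertices $p-e_1$, $p$, $p-e_2$, where $p$ is the corner vertex) meets $C$ in nothing more than those two edges. But every point in the interior of that triangle, and every interior point of its hypotenuse, has two non-integer coordinates, whereas every point of an axis-parallel unit lattice edge has at most one non-integer coordinate; hence the open triangle is disjoint from $C$, so $\triangle$ is an embedded disk effecting an elementary triangle move and $C'\simeq T(C)$ has knot type $[K]$. Therefore $e_{sh}[K]\le e_L[K]-1<e_L[K]$. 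I expect no deep obstacle: the delicate part is bookkeeping, namely forcing the single map $T$ to be simultaneously lattice-preserving, unit-edge-preserving, diagonal-realizing, and orientation-preserving. (The companion stick-number statement follows from the same transformation, but there one additionally needs the short-cut corner to be flanked by length-one segments, or to argue around this, which is what makes that case slightly harder.)
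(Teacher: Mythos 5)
Your proposal is correct and follows essentially the same route as the paper: transport a minimal-edge cubic conformation into the sh-lattice via the linear map $T$ and shorten a unit ``L''-corner to a single $z$-direction edge, using integrality to see that the unit triangle is unobstructed (the paper's \fullref{lemma2}). The only cosmetic difference is that you secure a suitably signed corner by a small good-corner lemma and a sign choice in $T$, where the paper simply rotates the knot before applying $T$; your explicit check that the swept triangle misses the polygon is the same integrality argument the paper invokes implicitly.
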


In addition, we give a classification of the sh-lattice knots with stick number $11$. 

\begin{theorem*}
The only non-trivial $11$-stick knots in the sh-lattice are $3_1$ and $4_1$.  
\end{theorem*}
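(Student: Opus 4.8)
The plan is to prove the classification by combining an explicit \emph{construction} with a finite \emph{enumeration}. On the construction side I will exhibit closed $11$-stick polygons in the sh-lattice of types $3_1$ and $4_1$, so that $s_{sh}[3_1]\le 11$ and $s_{sh}[4_1]\le 11$; on the enumeration side I will show that the only knot types carried by sh-lattice polygons with at most $11$ sticks are the unknot, $3_1$, and $4_1$, and that neither $3_1$ nor $4_1$ is carried by a polygon with at most $10$ sticks. Together these give $s_{sh}[3_1]=s_{sh}[4_1]=11$ and rule out every other nontrivial knot, which is exactly the statement. (Note that $s_{sh}[3_1]\le 11$ also follows immediately from the earlier theorem $s_{sh}[K]<s_L[K]$ together with the known value $s_L[3_1]=12$, but I want the explicit polygon for the figures anyway.)

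For the construction I would either search directly in a small prism of the sh-lattice or, more systematically, take the minimal cubic-lattice conformations of $3_1$ and $4_1$, push them through the linear lattice-to-lattice map used to prove the earlier theorems, and then merge collinear consecutive edges to reach an $11$-stick sh-polygon. In each case I would read a planar diagram off the projection to a horizontal layer and certify its knot type by reducing with Reidemeister moves or by computing a distinguishing invariant such as the Alexander or Jones polynomial.

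The enumeration is the heart of the proof. The first step is to set up a normal form: every stick of an sh-polygon is parallel to one of exactly four directions --- three spanning the triangular sublattice inside a single horizontal layer, and one vertical --- and consecutive sticks must use distinct directions, so an $11$-stick closed polygon omits at most one direction. Since the signed lengths used in each of the four directions must cancel, a counting argument bounds both the number of horizontal layers the polygon occupies and the split of its sticks between horizontal and vertical moves, which in turn confines the polygon, up to the symmetry group of the sh-lattice, to a bounded prism and hence to a finite list of combinatorial types, in the same spirit as the existing classifications of cubic-lattice knots with few sticks. The lower bounds of the previous section prune this list --- any knot complex enough to be excluded by them can be discarded at once --- and for each remaining configuration I would compute the knot type from its diagram and verify it is the unknot, $3_1$, or $4_1$, while also tracking that $3_1$ and $4_1$ first appear only once the stick count reaches $11$.

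The main obstacle is the completeness and bookkeeping of this enumeration: proving that the passage to a bounded prism is genuinely without loss of generality, listing the configurations up to lattice symmetry without gaps, and identifying the knot type of each one honestly. Because the result is sharp --- both $3_1$ and $4_1$ need exactly $11$ sticks --- there is no room for a loose crossing-count to stand in for a correct analysis; in particular any $\le 11$-stick configuration whose naive diagram has five or more crossings must be individually shown to simplify to the unknot, $3_1$, or $4_1$. A smaller but still necessary point is confirming $s_{sh}[4_1]\ge 11$ (and $s_{sh}[3_1]\ge 11$), i.e. that no sh-polygon with at most $10$ sticks is nontrivial; this falls out of the same enumeration restricted to $10$ sticks, or from the previous section's lower bounds.
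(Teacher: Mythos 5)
Your route is genuinely different from the paper's, but as it stands it has a real gap at its central step: the finiteness of the enumeration. Bounding the number of sticks at $11$ does \emph{not} bound the polygon to a prism of bounded size, because individual sticks can be arbitrarily long; the counting argument you invoke (cancellation of signed lengths in each direction, together with the bound on the number of $w$-levels) controls the vertical extent and the distribution of sticks among directions, but not the horizontal footprint. To make the enumeration finite you would need a normalization lemma stating that every $11$-stick sh-polygon is isotopic, through conformations with at most $11$ sticks, to one whose vertex coordinates are bounded by an explicit constant (equivalently, that the knot type depends only on the finite combinatorial data of direction sequence plus coordinate orderings). You correctly flag this as ``the main obstacle,'' but it is exactly the content of the proof and is not supplied. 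Two subsidiary claims are also off: the lower bounds of the previous section are far too weak to prune the list (for $s_{sh}[K]\le 11$ they only give roughly $s_L[K]\le 46$, nowhere near enough to restrict the knot type), and they likewise cannot show $s_{sh}[3_1],s_{sh}[4_1]\ge 11$; the paper gets that from Theorem~3 of Bailey et al., which says every non-trivial sh-lattice knot needs at least eleven sticks. Finally, ``consecutive sticks use distinct directions, hence at most one direction is omitted'' is not a valid inference (a planar $xy$-polygon omits two directions); the paper proves the needed statement separately.

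For contrast, the paper avoids enumeration altogether. It first pins down the $w$-structure of a non-trivial irreducible $11$-stick polygon (exactly four $w$-sticks, in the configuration $w_{13},w_{23},w_{24},w_{14}$, after showing the five-$w$-stick case is trivial), then bounds the split of the remaining seven sticks among $x$, $y$, $z$ (the $(4,2,1)$, $(3,3,1)$, $(3,2,2)$ cases), and finally eliminates the $z$-sticks via the ``square of replacement'': each $z$-stick is traded for $x$- and $y$-sticks at a net cost of at most three (occasionally four, handled by the $p$-case lemmas) extra sticks, so that applying $T^{-1}$ yields a cubic-lattice polygon of the same knot type with at most fifteen sticks. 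Huang and Yang's classification of cubic-lattice knots with $s_L\le 15$ then forces the type to be $3_1$ or $4_1$. If you want to salvage your approach, you must either prove the bounded-prism normalization and carry out the (large) finite check, or adopt a reduction of this kind that transfers the problem to an already-classified setting.
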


In this paper, we begin by discussing preliminary definitions and notations in \fullref{definitions} before moving into \fullref{primary tool}, where we discuss our primary tool: the linear transformation between the cubic lattice and simple hexagonal lattice. Then, in \fullref{upper bound results} we use this tool to find the upper bounds in the sh-lattice as described by the theorem and proposition above. We also find a lower bound of edge number in the sh-lattice in terms of the cubic lattice. Finally, \fullref{specific knots} is devoted to the classification of $11$-stick knots in sh-lattice, where we show that every non-trivial knot with $11$ sticks has to be either the trefoil knot, $3_1$, or the figure-eight knot, $4_1$. 

\section{Preliminary Definitions}\label{definitions}

A \textit{point lattice} in $\R^3$ is the set of integral linear combinations of vectors $x, y, z$, i.e. $\{ax + by + cz \, | \, a, b, c \in \Z \}$, such that $\{x, y, z\}$ is a basis for $\R^3$. The \textit{cubic lattice} is the point lattice with a basis of $x = \langle 1, 0, 0 \rangle, y = \langle 0, 1, 0 \rangle$, and $z = \langle 0, 0, 1 \rangle$. The \textit{simple hexagonal lattice (sh-lattice)} is the point lattice with a basis of $x = \langle 1, 0, 0 \rangle, y = \langle \frac{1}{2},\frac{\sqrt{3}}{2},0 \rangle,$ and $w = \langle 0, 0, 1 \rangle$. For convenience, we define $z = y - x = \langle - \frac{1}{2}, \frac{\sqrt{3}}{2}, 0\rangle$. We denote the cubic lattice by $\mathbb{L}^3$ and the sh-lattice by $sh$. An \textit{edge} is a line segment between points whose coordinates differ by a basis vector. A \textit{polygon} $\mathcal{P}$ in a point lattice is a continuous path consisting of edges. A maximal line segment parallel to an axis is called a \emph{stick}. We can label a stick by the axis to which it is parallel. For example, a stick parallel to the $x$-axis is an $x$-stick. A \emph{cubic lattice knot} is then a non-intersecting polygon in the cubic lattice consisting of $x$-, $y$-, and $z$-sticks. For the convenience of counting stick numbers in a given cubic lattice polygon $\mathcal P_L$, we define $|\mathcal P_x|$, $|\mathcal P_y|$, and $|\mathcal P_z|$ to be the number of $x$-, $y$-, and $z$-sticks used in the polygon $\mathcal P_L$. The total number of sticks $|\mathcal P_L|$ in a cubic lattice polygon $\mathcal P_L$ is therefore defined by $|\mathcal P_L| = |\mathcal P_x|+|\mathcal P_y|+|\mathcal P_z|$. According to \cite{mann2012stick}, a lattice knot presentation is called \textit{irreducible} if there does not exist a lattice knot presentation with a smaller stick number and is called \textit{properly leveled} with respect to $w$ if each $w$-level of $\mathcal P$ contains a single connected polygonal arc.

A \textit{sh-lattice knot} is a non-intersecting polygon in the sh-lattice consisting of $x$-, $y$-, $z$-, and $w$-sticks. Similar to the corresponding definition in the cubic lattice, we define $|\mathcal P_x|$, $|\mathcal P_y|$, $|\mathcal P_z|$, and $|\mathcal P_w|$ to be the number of $x$-, $y$-, $z$-, and $w$-sticks used in a sh-lattice knot polygon $\mathcal P_{sh}$. The total number of sticks $|\mathcal P_{sh}|$ in a sh-lattice knot $\mathcal P_{sh}$ is therefore $|\mathcal P_{sh}| = |\mathcal P_x|+|\mathcal P_y|+|\mathcal P_z|+|\mathcal P_w|$. 

Throughout this paper, a polygon $\mathcal P$ always means a polygonal lattice knot, either in the cubic lattice or in the sh-lattice. Also, a cubic lattice (respectively, sh-lattice) polygon will always be assumed to be properly leveled with respect to $z$-sticks (respectively, $w$-sticks) unless otherwise specified.

\section{Linear Transformation between Lattices}\label{primary tool}
One tool we frequently apply in this paper is a linear transformation $T: \mathbb L^3\to \text{sh}$ defined by 
$$T\begin{bmatrix} x \\ y \\ z \end{bmatrix} = \begin{bmatrix} 1 & \frac{1}{2} & 0 \\ 0 & \frac{\sqrt{3}}{2} & 0 \\ 0 & 0 & 1 \end{bmatrix} \begin{bmatrix} x \\ y \\ z \end{bmatrix},$$ which sends the cubic lattice to the simple hexagonal lattice. As we will demonstrate in this section, this is a transformation between knot conformations in the cubic lattice and knot conformations in the simple hexagonal lattice that preserves many crucial properties. Therefore, $T$ allows us to consider results in the two lattices in an analogous way. The effect of $T$ is illustrated by \fullref{Teffect}. 

\begin{figure}[ht]
    \centering
    \subfigure[In cubic lattice, before applying $T$]{\includegraphics[width = .45\textwidth]{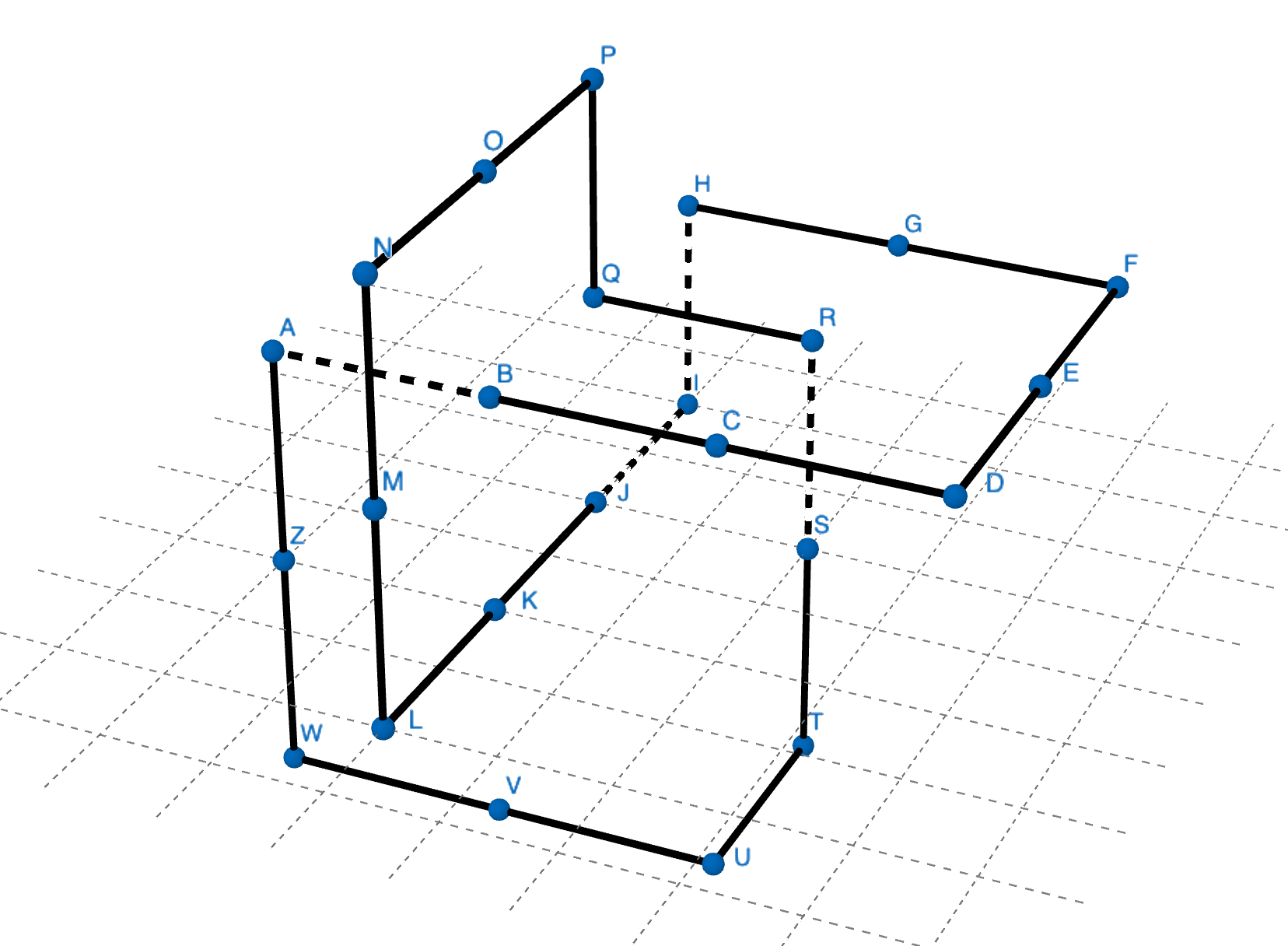}}
    \subfigure[In sh-lattice, after applying $T$]{\includegraphics[width = .45\textwidth]{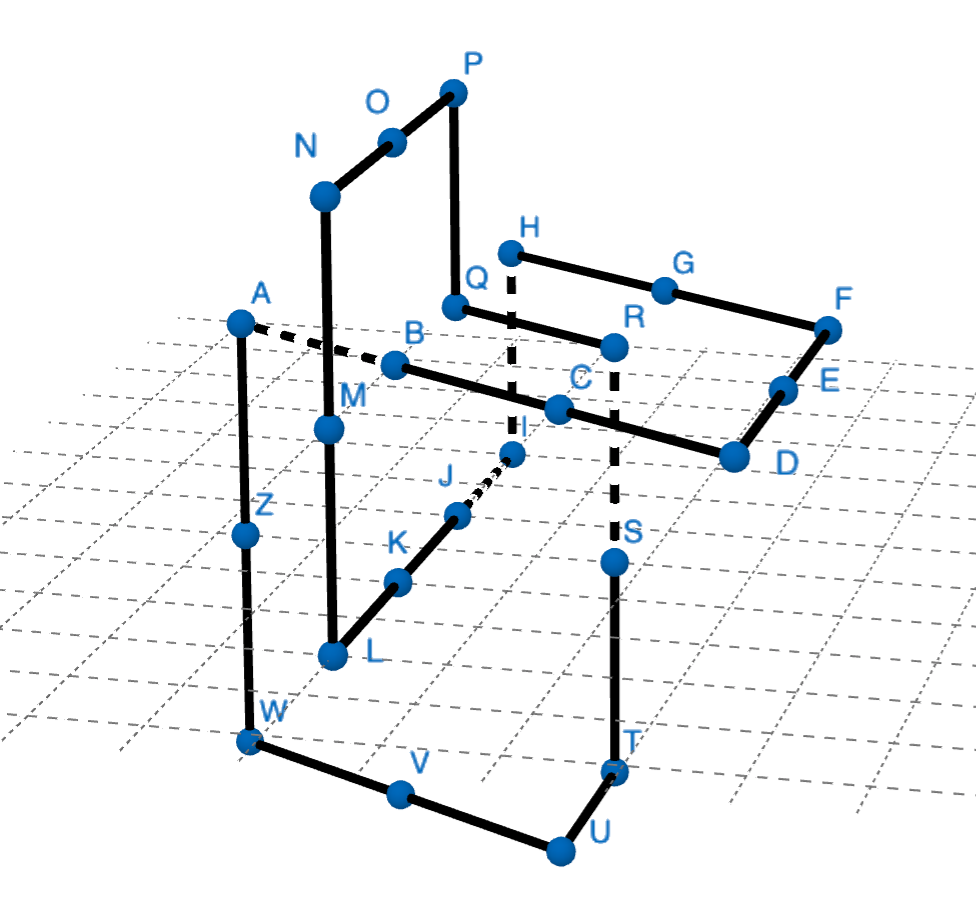}}
    \caption{Effect of $T$ on the trefoil knot}
    \label{Teffect}
\end{figure}

\begin{proposition}\label{prop1}
Let $\mathcal P_L$ be a polygon in the cubic lattice representing the knot type $[K]$. Applying the linear transformation $T = \begin{bmatrix} 1 & \frac{1}{2} & 0 \\ 0 & \frac{\sqrt{3}}{2} & 0 \\ 0 & 0 & 1 \end{bmatrix}$ produces a polygon $\mathcal P_{sh}$ in sh-lattice representing the same knot type.
\end{proposition}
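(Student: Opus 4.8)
The plan is to show that $T$ is an invertible linear map and therefore a homeomorphism of $\R^3$, and then observe that a homeomorphism of $\R^3$ carries any embedded polygon to an ambient-isotopic embedded polygon, hence one of the same knot type. So the proof splits into two pieces: (i) $T$ maps a cubic-lattice polygon to a genuine sh-lattice polygon (i.e. the image is a polygonal closed curve whose sticks are $x$-, $y$-, $z$-, $w$-sticks of the sh-lattice), and (ii) $T$ preserves the knot type.

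For piece (i), I would first check where $T$ sends the standard basis vectors of the cubic lattice. The three unit sticks in the cubic lattice point along $\langle 1,0,0\rangle$, $\langle 0,1,0\rangle$, $\langle 0,0,1\rangle$; applying $T$ sends these to $\langle 1,0,0\rangle = x$, $\langle \tfrac12,\tfrac{\sqrt3}{2},0\rangle = y$, and $\langle 0,0,1\rangle = w$ respectively. Since $T$ is linear, an integer point $(a,b,c) \in \mathbb L^3 \cap \Z^3$ maps to $ax + by + cw \in sh$, so lattice vertices go to lattice vertices. An $x$-stick of $\mathcal P_L$ (a segment between two vertices differing by an integer multiple of $\langle 1,0,0\rangle$) maps to a segment between two sh-vertices differing by an integer multiple of $x$, i.e. lies along the $x$-direction of the sh-lattice; similarly $y$-sticks map to segments along the $y$-direction and $z$-sticks (cubic) map to segments along $w$. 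Maximality of sticks is preserved because $T$ is a bijection that sends collinear points to collinear points and non-collinear to non-collinear: consecutive cubic sticks meet at right angles or along distinct axes, and their images meet along distinct sh-directions, so no two adjacent image-sticks merge, and conversely a single cubic stick cannot break up. Hence $\mathcal P_{sh} := T(\mathcal P_L)$ is a closed polygon in the sh-lattice made of $x$-, $y$-, and $w$-sticks (note: no $z$-sticks arise, which is fine — the definition only requires the sticks to be among the allowed four types).

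For piece (ii), the key fact is that $\det T = \tfrac{\sqrt3}{2} \neq 0$, so $T \colon \R^3 \to \R^3$ is a linear isomorphism, in particular a homeomorphism. Any linear isomorphism of $\R^3$ is isotopic to the identity through homeomorphisms (e.g. via the path $t \mapsto (1-t)\,\mathrm{id} + t\,T$ if $\det$ stays nonzero along it, or more robustly by connectedness of $GL_3^+(\R)$ after checking $\det T > 0$), so it is an ambient isotopy of $\R^3$; applying it to the embedded circle $\mathcal P_L$ yields an embedded circle $\mathcal P_{sh}$ that is ambient isotopic to it, hence of the same knot type $[K]$. I should also note that $\mathcal P_{sh}$ is still embedded (non-self-intersecting) simply because $T$ is injective. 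That is the whole argument.

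The only genuinely delicate point — and the one I expect to spend the most care on — is the maximality/stick-counting bookkeeping in piece (i): one must be sure that $T$ does not accidentally align two cubic sticks that were not aligned, nor split one that was. This reduces to the observation that the three image directions $x$, $y$, $w$ are pairwise linearly independent (indeed $x,y$ span the plane $w = 0$ and $w$ is transverse to it), so the combinatorial structure of the polygon — which vertices are corners, which sticks are maximal — is carried over verbatim. Everything else (the map is well-defined on lattice points, it is a homeomorphism, it preserves embeddedness and knot type) is immediate from linearity and $\det T \neq 0$.
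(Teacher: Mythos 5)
Your proposal is correct and follows essentially the same route as the paper: the paper's proof likewise observes that $T$ maps the cubic-lattice basis vectors to the sh-lattice generators $x$, $y$, $w$ and that $\det T = \tfrac{\sqrt{3}}{2} > 0$ makes $T$ an invertible, orientation-preserving homeomorphism, hence knot-type preserving. Your write-up simply fills in details the paper leaves implicit (the stick-maximality bookkeeping and the ambient-isotopy justification via connectedness of $GL_3^+(\R)$), which is fine but not a different argument.
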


\begin{proof}
Because $T$ is a linear transformation within a subset of $\R^3$ with $\det(T) = \frac{\sqrt{3}}{2}>0$, $T$ is invertible and orientation-preserving. Moreover, $T$ maps the basis vectors for the cubic lattice to those for the simple hexagonal lattice as $T \begin{bmatrix} 1 \\ 0 \\ 0 \end{bmatrix} = \begin{bmatrix} 1 \\ 0 \\ 0 \end{bmatrix}$, $T \begin{bmatrix} 0 \\ 1 \\ 0 \end{bmatrix} = \begin{bmatrix} \frac{1}{2} \\ \frac{\sqrt{3}}{2} \\ 0 \end{bmatrix}$, and $T \begin{bmatrix} 0 \\ 0 \\ 1 \end{bmatrix} = \begin{bmatrix} 0 \\ 0 \\ 1 \end{bmatrix}$. Finally, $T$ preserves knot type as it is an orientation-preserving homeomorphism.
\end{proof}

\begin{corollary} \label{cor2}
$T$ preserves stick number.
\end{corollary}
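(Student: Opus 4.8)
The plan is to trace $T$ one stick at a time. From the proof of \fullref{prop1} we already know $T$ is a linear bijection, so it carries the polygonal path $\mathcal P_L$ onto the polygonal path $\mathcal P_{sh}$, sending each line segment to a line segment and preserving which segments are consecutive. A linear map cannot break a segment into pieces, so the only thing to check is that $T$ does not fuse two or more sticks of $\mathcal P_L$ into a single stick of $\mathcal P_{sh}$; equivalently, that the direction changes of the path occur in $\mathcal P_{sh}$ exactly where they occur in $\mathcal P_L$.

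To see this, I would first record the images of the three stick directions of $\mathbb L^3$: $T$ fixes $\langle 1,0,0\rangle$, sends $\langle 0,1,0\rangle$ to $y=\langle\tfrac12,\tfrac{\sqrt3}{2},0\rangle$, and fixes $\langle 0,0,1\rangle=w$. So $x$-, $y$-, and $z$-sticks of $\mathcal P_L$ become, respectively, $x$-, $y$-, and $w$-sticks of $\mathcal P_{sh}$, and these three image directions are pairwise non-parallel (in particular $\mathcal P_{sh}$ contains no sh-lattice $z$-sticks). Next I would invoke the elementary fact that in any lattice knot two consecutive sticks point in distinct directions — two collinear consecutive segments would either form a single stick or force a self-intersection — so in $\mathcal P_L$ each adjacent pair of sticks uses two different directions among $\{x,y,z\}$. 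Combining this with the direction computation, the images of two adjacent sticks of $\mathcal P_L$ still point in two different sh-lattice directions, so no fusion takes place. Hence $T$ restricts to a bijection between the sticks of $\mathcal P_L$ and those of $\mathcal P_{sh}$; concretely $|\mathcal P_x|$ and $|\mathcal P_y|$ are unchanged, $|\mathcal P_z|$ of $\mathcal P_L$ equals $|\mathcal P_w|$ of $\mathcal P_{sh}$, $\mathcal P_{sh}$ has no $z$-sticks, and therefore $|\mathcal P_{sh}|=|\mathcal P_L|$.

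The only step demanding real attention — the potential obstacle — is ruling out the fusion of sticks under $T$; the rest is bookkeeping built on the linearity and bijectivity already established in \fullref{prop1}. That obstacle disappears once one notes that $T$ sends the three coordinate directions of the cubic lattice to three pairwise distinct directions of the sh-lattice, so every direction change of the path is preserved. As an immediate consequence, applying the corollary to a stick-minimal cubic conformation of a knot type $[K]$ yields $s_{sh}[K]\le s_L[K]$; the strict inequality announced in the introduction is the business of \fullref{upper bound results}.
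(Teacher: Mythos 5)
Your proof is correct and takes essentially the same route as the paper's: a per-direction count resting on the fact that $T$ sends $x$-, $y$-, and $z$-sticks of $\mathcal P_L$ to $x$-, $y$-, and $w$-sticks of $\mathcal P_{sh}$ and produces no sh-lattice $z$-sticks, so $|\mathcal P_{sh}|=|\mathcal P_L|$. The only difference is that you explicitly rule out fusion of consecutive sticks by noting the three image directions are pairwise non-parallel, a point the paper's proof takes for granted rather than a genuinely different argument.
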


\begin{proof}
We can consider $|\mathcal P_{sh}|_x, |\mathcal P_{sh}|_y, |\mathcal P_{sh}|_w,$ and $|\mathcal P_{sh}|_z$ separately.

Beginning with $|\mathcal P_{sh}|_z$, note that as $T$ maps sticks in the cubic lattice only to $x$-, $y$-, or $w$-sticks in the simple hexagonal lattice, $|\mathcal P_{sh}|_z$ is necessarily 0.

Then, for $x$-, $y$-, and $w$- sticks, since $T$ maps $x$-sticks to $x$-sticks, $y$-sticks to $y$-sticks, and $z$-sticks to $w$-sticks from the cubic lattice to the sh-lattice, respectively $|\mathcal P_{sh}|_x = |\mathcal P_L|_x$, $|\mathcal P_{sh}|_y = |\mathcal P_L|_y$, and $|\mathcal P_{sh}|_w = |\mathcal P_L|_z$.

Where $|\mathcal P_L| = |\mathcal P_L|_x + |\mathcal P_L|_y + |\mathcal P_L|_z$ and $|\mathcal P_{sh}| = |\mathcal P_{sh}|_x + |\mathcal P_{sh}|_y + |\mathcal P_{sh}|_z + |\mathcal P_{sh}|_w$, we have that 
\begin{align*}
|\mathcal P_L| &= |\mathcal P_L|_x + |\mathcal P_L|_y + |\mathcal P_L|_z \\
&= |\mathcal P_{sh}|_x + |\mathcal P_{sh}|_y + 0 + |\mathcal P_{sh}|_w \\
&= |\mathcal P_{sh}|_x + |\mathcal P_{sh}|_y + |\mathcal P_{sh}|_z + |\mathcal P_{sh}|_w \\
&= |\mathcal P_{sh}|
\end{align*}
as desired.
\end{proof}

\begin{lemma} \label{lemma1}
 $T$ preserves the order and length of sticks. That is, $\mathcal P_L$ and $\mathcal P_{sh} = T(\mathcal P_L)$ can be represented by the same series of sticks of the same lengths, where $\mathcal P_L$ has sticks in the cubic lattice and $\mathcal P_{sh}$ sticks in the sh-lattice.
\end{lemma}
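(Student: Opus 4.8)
The plan is to read everything off the linearity of $T$ together with the explicit images of the standard basis vectors already computed in the proof of \fullref{prop1}. Write the cubic polygon $\mathcal P_L$ as a cyclic sequence of sticks $s_1, s_2, \dots, s_n$, where $s_i$ is the straight segment from a lattice point $p_{i-1}$ to $p_i$ in one of the coordinate directions $e_1, e_2, e_3$. Since $T$ is linear (indeed affine-linear on each segment), each $s_i$ is carried to the straight segment $T(s_i)$ from $T(p_{i-1})$ to $T(p_i)$, and these image segments fit together head-to-tail in exactly the same cyclic order. Thus the only thing to verify for the ``order'' statement is that passing to $T$ does not amalgamate two consecutive sticks into one, and does not force a single stick to be reinterpreted as several sh-sticks.

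First I would dispose of the ``no internal subdivision'' point: $T(s_i)$ is the image of a line segment under a linear map, hence again a single line segment, and it is parallel to $T(e_1) = x$, $T(e_2) = y$, or $T(e_3) = w$ according to the direction of $s_i$. All three of these are generating directions of the sh-lattice, so $T(s_i)$ is an honest sh-lattice segment carried by a single stick. Next, for the ``no amalgamation'' point: in any lattice knot consecutive sticks point in distinct coordinate directions, and $T$, being invertible and linear, sends the pairwise-nonparallel directions $e_1, e_2, e_3$ to the pairwise-nonparallel directions $x, y, w$; hence $T(s_i)$ and $T(s_{i+1})$ are non-collinear, so each $T(s_i)$ is a \emph{maximal} segment in its direction, i.e. a genuine stick. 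Under the dictionary ($x$-stick $\mapsto$ $x$-stick, $y$-stick $\mapsto$ $y$-stick, cubic $z$-stick $\mapsto$ $w$-stick) the two stick sequences therefore match term by term.

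For the length claim I would argue as follows. A stick $s_i$ of lattice length $\ell_i$ in direction $e_j$ is the concatenation of $\ell_i$ unit edges $e_j$, so $T(s_i)$ is the concatenation of $\ell_i$ copies of $T(e_j)\in\{x,y,w\}$. Each of $x, y, w$ is a primitive generator of the sh-lattice of Euclidean norm $1$ (one checks $\|x\| = \|w\| = 1$ and $\|y\| = \sqrt{\tfrac14 + \tfrac34} = 1$), so $T(s_i)$ has lattice length $\ell_i$, and also Euclidean length $\ell_i$, identical to that of $s_i$. Combining this with the previous paragraph gives that $\mathcal P_L$ and $\mathcal P_{sh} = T(\mathcal P_L)$ are represented by the same series of sticks of the same lengths.

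The step I expect to be the main obstacle is the maximality argument in the second paragraph: one must be careful that applying $T$ neither merges two consecutive sticks nor splits one stick, and this is precisely where invertibility of $T$ and the fact that consecutive sticks of a lattice polygon are never parallel are needed. Everything else is a direct computation with the matrix of $T$ and the basis images from \fullref{prop1}.
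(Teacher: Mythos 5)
Your proposal is correct and follows essentially the same route as the paper: both rest on the linearity of $T$ together with the basis images $e_1\mapsto x$, $e_2\mapsto y$, $e_3\mapsto w$, the paper phrasing it as a formal rewriting of the stick word $x^{a_1}y^{b_1}\cdots$ into $x_{sh}^{a_1}y_{sh}^{b_1}\cdots$. The only difference is that you explicitly verify the no-merging/no-splitting (maximality) point and the unit norms $\|x\|=\|y\|=\|w\|=1$, details the paper compresses into ``it is clear,'' so your write-up is, if anything, a more careful version of the same argument.
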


\begin{proof}
Consider the sequence of sticks forming $\mathcal P_L$. Without loss of generality, we denote the sequence to be \begin{center} $\mathcal P_L = x^{a_1} y^{b_1} \ldots x^{a_i} y^{b_i} z^{c_1} x^{a_{i + 1}} y^{b_{i + 1}} \ldots x^{a_j} y^{b_j} z^{c_2} \ldots x^{a_n} y^{b_n} z^{c_m}$ \end{center} where $a_1, \ldots a_n, b_1, \ldots b_n, c_1, \ldots c_m \in \mathbb{Z}$. The magnitude of an exponent gives us the length of its corresponding stick, and the sign of an exponent gives the stick's direction. For example, $x^{-1}$ could also be written as $X$ according to the notation proposed in \cite{bailey2015stick}. Moreover, We can rewrite the expression of $\mathcal P_L$ as $a_1 x + b_1 y + \ldots + a_i x + b_i y + c_1 z + a_{i + 1} x + b_{i + 1} y + \ldots + a_j x + b_j y + c_2 z + \ldots + a_n x + b_n y + c_m z$, where $x$, $y$, and $z$ are now expressed as vectors. Applying $T$ produces
	\begin{align*}
	    T(\mathcal P_L) &= T(x^{a_1} y^{b_1} \cdots x^{a_i} y^{b_i} w^{c_1} x^{a_{i + 1}} y^{b_{i + 1}} \cdots x^{a_j} y^{b_j} w^{c_2} \cdots x^{a_n} y^{b_n} w^{c_m}) \\
	    &= T(a_1 x + b_1 y + \ldots + a_i x + b_i y + c_1 w + a_{i + 1} x + b_{i + 1} y + \ldots + a_j x + \\ &b_j y + c_2 w + \ldots + a_n x + b_n y + c_m w) \\
	    &= a_1 Tx + b_1 Ty + \ldots + a_i Tx + b_i Ty + c_1 Tw + a_{i + 1} Tx + b_{i + 1} Ty + \ldots + a_j Tx + \\ &b_j Ty + c_2 Tw + \ldots + a_n Tx + b_n Ty + c_m Tw \\
	    &= a_1 x_{sh} + b_1 y_{sh} + \ldots + a_i x_{sh} + b_i y_{sh} + c_1 w_{sh} + a_{i + 1} x_{sh} + b_{i + 1} y_{sh} + \ldots + a_j x_{sh} + \\ &b_j y_{sh} + c_2 w_{sh} + \ldots + a_n x_{sh} + b_n y_{sh} + c_m w_{sh} \\
	    &= x_{sh}^{a_1} y_{sh}^{b_1} \cdots x_{sh}^{a_i} y_{sh}^{b_i} w_{sh}^{c_1} x_{sh}^{a_{i + 1}} y_{sh}^{b_{i + 1}} \cdots x_{sh}^{a_j} y_{sh}^{b_j} w_{sh}^{c_2} \cdots x_{sh}^{a_n} y_{sh}^{b_n} w_{sh}^{c_m}
	\end{align*}

It is clear that the result of these manipulations has the same order and length of sticks.
\end{proof}

\section{Upper Bound of Stick Number and Edge Length in sh-lattice}\label{upper bound results}

\begin{definition} [Stick Number of a Knot Type]
The \textit{stick number of a knot type} is the smallest number of sticks necessary to build a knot in $\mathbb{R}^3$. With respect to a lattice $\mathcal{L}$, we define the stick number $s_\mathcal{L}$ to be the smallest number of sticks among all knot conformations $\mathcal P$ of $[K]$ in $\mathcal{L}$, i.e. $s_\mathcal{L}[K] = \min\limits_{\mathcal P\in [K], \mathcal P \subset \mathcal L} |\mathcal P|$. In particular, we denote the stick number with respect to the cubic lattice as $s_L[K] = \min\limits_{\mathcal P\in [K],\mathcal P\subset \mathbb L^3} |\mathcal P|$ and denote the stick number with respect to the sh-lattice as $s_{sh}[K] = \min\limits_{\mathcal P\in [K],\mathcal P\subset sh} |\mathcal P|$. 
The \textit{stick number of a knot type} $[K]$ is the least stick number among all knot conformations $\mathcal P$ of $[K]$ in a given lattice, i.e. $s[K] = \min\limits_{\mathcal P\in [K]} |\mathcal P|$. In particular, we denote the stick number with respect to the cubic lattice as $s_L[K] = \min\limits_{\mathcal P\in [K],\mathcal P\subset \mathbb L^3} |\mathcal P|$ and denote the stick number with respect to the sh-lattice as $s_{sh}[K] = \min\limits_{\mathcal P\in [K],\mathcal P\subset sh} |\mathcal P|$. 
\end{definition}

\begin{proposition}\label{prop4.2}
For any knot type $[K]$, $s_{sh}[K] \leq s_{L}[K]$, where $s_{sh}$ is the stick number of $[K]$ in the simple hexagonal lattice and $s_{L}$ is the stick number of $[K]$ in the cubic lattice.
\end{proposition}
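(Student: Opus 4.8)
The plan is to deduce this inequality directly from the structural facts about $T$ established earlier in \fullref{primary tool}. First I would invoke the definition of the stick number to pick a cubic-lattice conformation $\mathcal P_L$ of $[K]$ that realizes the minimum, so that $|\mathcal P_L| = s_L[K]$. Then I would form $\mathcal P_{sh} = T(\mathcal P_L)$ and argue that it is an admissible competitor in the minimization defining $s_{sh}[K]$.

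The second step is to check admissibility: by \fullref{prop1}, $\mathcal P_{sh}$ is a polygon in the sh-lattice representing the same knot type $[K]$. In particular, it is non-self-intersecting because $T$ is a homeomorphism, and its vertices lie in the sh-lattice because $T$ sends the cubic basis vectors to the generators $x$, $y$, $w$ of the sh-lattice. Hence $\mathcal P_{sh}$ is a genuine sh-lattice knot of type $[K]$, so $s_{sh}[K] \leq |\mathcal P_{sh}|$. The final step is to count sticks: by \fullref{cor2}, $T$ preserves stick number, so $|\mathcal P_{sh}| = |\mathcal P_L| = s_L[K]$, and chaining the two relations yields $s_{sh}[K] \leq s_L[K]$.

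There is no substantive obstacle here, since each step is a direct application of a statement already proved; the only point deserving a moment's attention is that we really are allowed to use $T(\mathcal P_L)$ as an sh-lattice knot rather than merely as a homeomorphic image of one, and that is precisely the content of \fullref{prop1}. It is worth noting that this proposition only gives the non-strict inequality; upgrading to the strict inequality $s_{sh}[K] < s_L[K]$ claimed in the introduction will require an extra argument, as $T$ by itself never decreases the stick count.
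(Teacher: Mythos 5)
Your argument is correct and follows essentially the same route as the paper: apply $T$ to a minimal cubic-lattice conformation, use \fullref{prop1} to see the image is an sh-lattice knot of the same type, and use \fullref{cor2} to conclude the stick count is unchanged, so the image is a competitor in the minimization defining $s_{sh}[K]$. In fact your write-up matches the paper's own remark following the proposition almost verbatim, and is arguably cleaner than the paper's stated proof, which phrases the same idea via an unnecessary case split on whether $T(\mathcal P_L)$ is itself minimal.
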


\begin{proof}
Suppose the polygon $\mathcal P_L$ representing knot type $[K]$ in the cubic lattice has minimal stick number, i.e. $|\mathcal P_L| = s_L[K]$. Now consider $T(\mathcal P_L) = \mathcal P_{sh}$. If $\mathcal P_{sh}$ also has minimal stick number, that is $|\mathcal P_{sh}| = s_{sh}[K]$, then $s_{sh}[K] = s_L[K]$. If $\mathcal P_{sh}$ does not have minimal stick number, then $s_{sh}[K] < s_L[K]$. Taken together, $s_{sh}[K] \leq s_L[K]$.
\end{proof}

\begin{remark}
\fullref{prop4.2} can be proved in a different way: from \fullref{prop1}, we have that for any polygon $\mathcal P_L$ in the cubic lattice, there exists a polygon $\mathcal P_{sh} = T(\mathcal P_L)$ in the simple hexagonal lattice representing the same knot type and with the same number of sticks. Where $\mathcal P_L$ is the presentation of a knot type $[K]$ with $s_L[K]$ sticks, the minimal presentation of $[K]$ in the simple hexagonal lattice has at most $s_L[K]$ sticks. Hence, $s_{sh}[K] \leq s_L[K]$.
\end{remark}

It is reasonable to conjecture that the bound presented in \fullref{prop4.2} should be strict. Note that $T$ preserves the stick number as well as the overall structure of the knot because of the properties in \fullref{primary tool}, but the $z$-sticks in sh-lattice are left unused since they are not in the image of $T$. Therefore, as demonstrated by \fullref{yx-z} below, one can imagine finding a corner consisting of an $x$-stick and a $y$-stick, and reducing the corner to a $z$-stick instead. This is possible via ambient isotopy, as long as there are no other sticks in the way of the potential trajectory. 

\begin{figure}[ht]
    \centering
    \begin{tikzpicture}
    \draw[line width=0.4mm] (0,0) -- ++(1,{sqrt(3)});
    \draw[line width=0.4mm] (0,0) -- ++(2,0);
    \draw[line width=0.4mm, ->] (3,1) --++(2,0);
    \draw[line width=0.4mm] (6,{sqrt(3)}) -- ++(1,{-sqrt(3)});
    \end{tikzpicture}
    \caption{Transforming an $xy$-corner to a $z$-stick}
    \label{yx-z}
\end{figure}
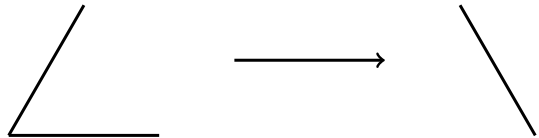

\begin{lemma} \label{lemma2}
Project a polygon $\mathcal P$ in the cubic lattice down to the $xy$-plane. Suppose we have an $x$-stick and a $y$-stick of equal length connected in the shape of an ``L". We will call the $x$-stick $x$ and the $y$-stick $y$. If there are no $z$-sticks within the triangle with $x$ and $y$ as legs, then we can replace them with a $z$-stick in the hexagonal lattice after applying $T$. 
\end{lemma}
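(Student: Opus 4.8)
The plan is to realize the proposed modification as a single ``disk slide'' (an ambient isotopy) carried out in the cubic lattice, and then to transport it to the sh-lattice via $T$. After a translation write the corner as the subpath $A\to B\to C$ of $\mathcal P$, where $BA$ is the $x$-stick, $BC$ is the $y$-stick, $\ell$ is their common length, and $B=(0,0,h)$, so that $BA$ and $BC$ both lie in the horizontal plane $z=h$. For the diagonal $AC$ to be sent by $T$ to an sh $z$-stick --- which is what the statement asserts --- the two legs must point the same way out of $B$, so we may take $A=(\ell,0,h)$ and $C=(0,\ell,h)$ (the opposite orientation is symmetric; for the remaining two orientations of the $L$, $T(AC)$ is not parallel to any sh stick, and those fall outside the claim). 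By \fullref{lemma1}, $T$ carries $BA$ and $BC$ to an sh $x$-stick and an sh $y$-stick, each still of length $\ell$ and still in the horizontal plane; and a direct computation gives $T(C)-T(A)=\ell\langle-\tfrac12,\tfrac{\sqrt3}{2},0\rangle=\ell z$, so $T$ carries the segment $AC$ to an sh $z$-stick of length $\ell$ with the same two endpoints as the image of the $L$. Hence ``replacing the corner by a $z$-stick'' means exactly: replace two sides of the flat triangle on the points $T(A),T(B),T(C)$ by its third side.

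Replacing two sides of a triangle by its third side is an ambient isotopy of $\R^3$ and hence leaves the knot type unchanged, so (bearing in mind \fullref{prop1}, which identifies $T(\mathcal P)$ as an sh-lattice presentation of the same knot type) the modified figure is an sh-lattice knot of the same type as $\mathcal P$ --- provided the closed triangular disk on $T(A),T(B),T(C)$ meets $T(\mathcal P)$ in nothing beyond those two sides. Since $T$ is a bijection this is equivalent to the purely cubic statement that the closed isosceles right triangle $\Delta\subset\{z=h\}$ with legs $BA$ and $BC$ meets $\mathcal P$ in exactly $BA\cup BC$. So the entire lemma reduces to this one disjointness claim, which I would now check inside the cubic lattice.

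To that end split $\mathcal P\setminus(BA\cup BC)$ by its position relative to the plane $z=h$. A stick of $\mathcal P$ not contained in that plane can meet $\Delta$ only if it is a $z$-stick --- the $z$-sticks being the only sticks whose third coordinate varies --- and a $z$-stick meets the plane $z=h$ in at most one point, namely its projection to the $xy$-plane; by hypothesis no such projection lies in $\Delta$, so these sticks are harmless. What remains is the part of $\mathcal P$ that lies in the plane $z=h$; since $\mathcal P$ is properly leveled in $w$, this part is a single embedded polygonal arc $\gamma$ containing the subpath $A\to B\to C$. The arc $\gamma$ cannot cross $BA$ or $BC$ (that would be a self-intersection of $\mathcal P$), so the only way $\gamma$ could enter the interior of $\Delta$ is by crossing the open diagonal $AC$.

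Ruling out that crossing is the step I expect to be the genuine obstacle, and it is where the in-plane geometry --- rather than the clean hypothesis on $z$-sticks --- has to do the work. The fact I would lean on is that $AC$ is the $45^\circ$ grid diagonal $\{s+t=\ell\}$: an axis-parallel stick of $\gamma$ can meet it only at a point with integer coordinates, i.e.\ at a cubic lattice point lying on $AC$, and applying $T$ to that point lands it on the prospective $z$-stick. Such a crossing would therefore already keep the modified figure from being an embedded sh-lattice polygon, so it cannot occur in any situation in which the conclusion is meaningful; consequently $\gamma\cap\Delta=BA\cup BC$, the disk $\Delta$ is clear, and the disk slide completes the proof. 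When $\ell=1$, and in fact whenever $\ell\le 2$, this is automatic: $\Delta$ and its diagonal then contain no lattice points other than the vertices, the hypothesis on $z$-sticks holds vacuously, and nothing of $\mathcal P$ can reach the interior of $\Delta$. (Bulging $\Delta$ off the plane $z=h$ to dodge in-plane debris does not help, since its diagonal edge stays at height $h$ and only $z$-sticks can meet the interior of a bulged disk --- exactly the case already handled.) In short, the vertical obstruction is precisely what the stated hypothesis eliminates, and certifying that the sliding disk is otherwise clear is the one delicate point.
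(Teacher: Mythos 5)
Your route is the same as the paper's, just spelled out more carefully: place the corner at lattice coordinates, check that $T$ sends the two legs to sh $x$- and $y$-sticks and the hypotenuse to an sh $z$-stick with the same endpoints (your orientation remark matches the paper's implicit choice of coordinates $(0,0),(c,0),(0,c)$), and justify the replacement as an ambient isotopy across the triangular disk at the corner's level, with vertical sticks the only possible obstruction coming from other levels. The paper's proof is exactly this, compressed into three sentences.

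The step you yourself flag as delicate is, however, not closed, and the way you dispose of it is circular. You must rule out a same-level $x$- or $y$-stick of $\mathcal P$ crossing the open hypotenuse $AC$, and your argument is that such a crossing would make the replaced figure non-embedded, ``so it cannot occur in any situation in which the conclusion is meaningful'' --- but the lemma claims the replacement is possible whenever the stated hypothesis (no $z$-sticks in the triangle) holds, and that hypothesis simply does not exclude such a crossing. Concretely, with $B=(0,0,h)$, $A=(\ell,0,h)$, $C=(0,\ell,h)$ and $\ell\ge 3$, continue the level-$h$ arc from $A$ by a unit $y$-stick to $(\ell,1,h)$ and then an $x$-stick left to $(1,1,h)$: this stick enters the triangle's interior and meets the prospective $z$-stick after applying $T$, yet no $z$-stick lies in the triangle. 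So no argument can finish your step without strengthening the hypothesis; the literal statement is too weak. To be fair, the paper's own proof elides exactly the same point (``we can smoothly deform \dots because there are no $z$-sticks in the triangle''), and in its only application, \fullref{conjecture2}, the lemma is invoked only after arranging that no other sticks --- in particular no endpoints of $x$- or $y$-sticks --- lie in the triangle, which is the hypothesis actually needed. A clean repair that fits your setup: a same-level $x$- or $y$-stick meeting the closed triangle cannot cross either leg (that would be a self-intersection of $\mathcal P$), hence must have an endpoint in the closed triangle; so assume ``no $z$-sticks, and no endpoints of same-level $x$- or $y$-sticks, in the triangle,'' and your disk-slide argument then goes through verbatim.
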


\begin{proof}
Suppose we have an intersection as above. Label the endpoints of the sticks as $(0,0), (c,0), (0,c)$ where $c$ is the length of the legs. Then, applying $T$, we get new endpoints at $(0,0), (c,0), (\frac{c}{2},\frac{c\sqrt{3}}{2})$. Thus, we can connect $(\frac{c}{2},\frac{c\sqrt{3}}{2})$ and $(0,0)$ with a $z$-stick $\langle\frac{-c}{2},\frac{c\sqrt{3}}{2}\rangle$ replacing our initial $x$ and $y$-sticks and reducing the stick number of $\mathcal P$ by 1. This does not change the knot type because there are no $z$-sticks in the triangle so we can smoothly deform the $x$ and $y$-sticks into the $z$-stick. 
\end{proof}

\begin{theorem}\label{conjecture2}
For any knot type $[K]$, $s_{sh}[K] < s_{L}[K]$. 
\end{theorem}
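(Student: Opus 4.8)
The plan is to strengthen \fullref{prop4.2} by showing that $T$ can never output a \emph{minimal} sh-conformation. Let $\mathcal P_L$ be a cubic conformation of $[K]$ with $|\mathcal P_L| = s_L[K]$, chosen properly leveled in the vertical direction, and set $\mathcal P_{sh} := T(\mathcal P_L)$; by \fullref{cor2} it has $s_L[K]$ sticks and by \fullref{lemma1} it has no $z$-sticks. It then suffices to produce from $\mathcal P_{sh}$ an sh-conformation of $[K]$ with at most $s_L[K]-1$ sticks, since this forces $s_{sh}[K] \le s_L[K] - 1 < s_L[K]$. The reduction tool is \fullref{lemma2}: an $xy$-corner with equal-length legs whose corner triangle is free of obstructions collapses to a single $z$-stick, dropping the stick count by one. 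So the real content is to locate, inside $\mathcal P_{sh}$, a place where \fullref{lemma2} (or a small variant of it) applies.

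I would organize the search around the extremal levels of the vertical coordinate. Because $\mathcal P_L$ is properly leveled, the topmost level consists of a single arc $\alpha$, and that entire level contains nothing but $\alpha$; hence any corner of $\alpha$ has a corner triangle automatically free of the rest of the knot, and the collapsing isotopy of \fullref{lemma2} can be kept clear of everything else. If $\alpha$ has two or more sticks it contains an $xy$-corner, and choosing that corner to be an ``ear'' of the planar arc $\alpha$ (so $\alpha$ does not re-enter the triangle) lets \fullref{lemma2} apply \emph{provided the two legs are equal}. If instead $\alpha$ is a single stick, I would use the four consecutive sticks $w\,h\,w$ hanging off it (vertical, horizontal, vertical): minimality of $\mathcal P_L$ rules out the degenerate case in which the two vertical pieces exactly cancel (otherwise the three cubic sticks collapse to one), and one checks that this $\sqcap$, lying entirely in a single vertical plane at heights at most the top level, can be isotoped into an $x$-stick followed by a $w$-stick, again saving one stick. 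The unknot is disposed of directly: its minimal sh-conformation is a triangle of $3$ sticks while its minimal cubic conformation uses $6$.

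The main obstacle is the equal-length hypothesis. The corner produced above need not have legs of equal length, and a purely local substitution at an unbalanced corner merely trades two sticks for two: a length-$p$ $x$-stick and a length-$q$ $y$-stick sharing a corner can be re-routed through a single $z$-stick only when $p = q$, and the analogous check applies to the $\sqcap$-move. I would attack this by normalization, in one of two ways: (i) pass to a minimal cubic conformation whose extremal level is genuinely a single stick or a balanced $xy$-corner, using exchange and ``push'' moves that preserve the knot type and do not increase the stick count; or (ii) widen the local window, since a maximal run of horizontal sticks whose total $x$-displacement plus total $y$-displacement vanishes collapses, after $T$, to a single $z$-stick with an even larger saving, so it is enough to create one such balanced run by a controlled modification. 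Carrying out this normalization while keeping the collapsing isotopy disjoint from the rest of the conformation is, in my estimation, the crux of the argument; once it is in place, the remaining steps are routine applications of \fullref{prop1}, \fullref{cor2}, and \fullref{lemma1}.
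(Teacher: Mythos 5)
Your overall plan coincides with the paper's: take a minimal cubic conformation, push it through $T$, and then use \fullref{lemma2} to collapse one $xy$-corner into a single $z$-stick, giving $s_{sh}[K]\le s_L[K]-1$. But the two difficulties you yourself single out --- making the two legs of the corner equal, and making the corner triangle free of obstructions --- are exactly the content of the theorem, and your proposal does not resolve either of them. You explicitly defer the equal-length problem to an unspecified ``normalization,'' offering only the two vague options (i) and (ii) and calling it ``the crux''; that is the gap. The paper closes it with two concrete lattice-preserving deformations: first, assuming the $x$-leg $a$ is no longer than the $y$-leg $b$, it lengthens every $x$-stick lying to the right of the relevant $x$-level by the difference of the two leg lengths, which equalizes the legs without adding sticks or changing the knot type; second, if the resulting isosceles triangle still contains $z$-sticks or endpoints of $x$- or $y$-sticks, it compresses the strip of $y$-levels strictly between $0$ and $b_y$ affinely into the strip between $b_y-p_x$ and $b_y$ (where $p_x$ is the smallest $x$-coordinate of an obstruction), pushing every obstruction above the hypotenuse, and then rescales the whole picture by $b_y$ to land back on lattice coordinates. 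Only after these two moves does \fullref{lemma2} apply. Nothing equivalent to this stretch-and-squeeze argument appears in your sketch, so the reduction from $s_{sh}[K]\le s_L[K]$ (which is just \fullref{prop4.2}) to the strict inequality is not actually established.

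A secondary problem is your treatment of the case where the top level is a single stick. The replacement you propose there --- isotoping the vertical--horizontal--vertical $\sqcap$ into an $x$-stick followed by a $w$-stick --- uses only cubic-lattice directions, so if that isotopy were available it would already reduce the stick number of $\mathcal P_L$ inside the cubic lattice, contradicting the assumed minimality $|\mathcal P_L|=s_L[K]$. In other words, in a minimal cubic conformation that move must be obstructed: the rectangle you would sweep across lies strictly below the top level and can perfectly well meet the rest of the knot, and ``lying in a single vertical plane at heights at most the top level'' does not prevent this. (The same caution applies to your claim that a corner triangle at the top level is ``automatically'' clear: it can still contain other portions of the top-level arc, which is why the paper avoids privileging the extremal level altogether and instead clears an arbitrary corner by the compression described above.)
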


\begin{proof}
Consider some intersection of an $x$-stick and a $y$-stick in the cubic lattice. Also, rotate the knot so that this intersection is in the shape of an ``L". Let us label $a$ as the $x$-stick and $b$ as the $y$-stick. Without loss of generality, suppose the length of $a$ is less than or equal to that of $b$. Then, consider the $x$-level that $b$ sits on. Extend all the $x$-sticks to the right of this $x$-level by the difference in lengths between $a$ and $b$. Now, by connecting the farthest points on $a$ and $b$, we can visualize an isosceles right triangle when we project the knot down onto the $xy$-plane. Note that we are not adding a stick here. 
 
 If there are no other sticks in this triangle, we can apply \fullref{lemma2} and reduce the stick number by 1. Otherwise, suppose that there are $n$ $z$-sticks in the triangle or endpoints of $x$- or $y$-sticks in the triangle, which we will label as $p_1, \ldots, p_n$. Let the intersection of $x$ and $y$ be the origin and let $p_x$ be the $x$-coordinate of the leftmost $p_i$. Also, let $b_y$ be the $y$-coordinate of the upper endpoint of $b$. Note that the line $y=b_y-x$ forms the hypotenuse of a right triangle connecting the endpoints of $a$ and $b$. Thus if the sum of $x$- and $y$-coordinates of a point is greater than $b_y$, then it will lie outside the triangle. 

Our goal here is to squeeze the region strictly bounded by $y$-levels $b_y$ and $0$ into the region strictly bounded by $y$-levels $b_y$ and $b_y-p_x$. Doing so will cause all $z$-sticks or endpoints of $x$- and $y$-sticks to have $x$-coordinate greater than or equal to $p_x$ and $y$-coordinate greater than $b_y-p_x$ so the sum of their coordinates will be greater than $b_y$. So, let us look at the endpoints of all sticks in the knot. Let $y$ be the current $y$-coordinate of the points and $y'$ the new coordinate according to the following equation:

  \[
y' = 
\begin{cases} 
b_y-(b_y-y)(\frac{b_y-(b_y-p_x)}{b_y-0}) & \text{if $ 0 < y < b_y$} \\
y & \text{otherwise} \\
\end{cases}
\]
  which simplifies to
  \[
y' = 
\begin{cases} 
b_y-p_x+\frac{p_x}{b_y}y & \text{if $ 0 < y < b_y$} \\
y & \text{otherwise} \\
\end{cases}
\]

 This scales the distance between the sticks and the $b_y$ level so that they lie within the desired region, as demonstrated in \fullref{thm4.4} below.

\begin{figure} [ht]
    \centering
    \begin{tikzpicture}[scale = 0.9]
    \draw[line width = 0.4mm] (0,0) --++(3,0);
    \draw[line width = 0.4mm] (0,0) --++(0,5);
    \fill[red!100] (1,1) circle(.12);
    \fill[red!100] (1.5,0.5) circle(.12);
    \fill[red!100] (2,1.5) circle(.12);
    \fill[red!100] (2.5,0.2) circle(.12);
    \draw[line width = 0.5mm, ->] (3.5,3) -- ++ (1,0);
    \fill[yellow!30] (5,0) --++(5,0) --++(0,5)--++(-5,0)--++(0,-5);
    \draw[line width = 0.4mm] (5,0) --++(0,5);
    \draw[line width = 0.4mm, red, dotted] (5,0) --++(2,0);
    \draw[line width = 0.4mm] (7,0) --++(3,0);
    \draw[black!40] (5,5) --++(5,-5);
    \fill[red!100] (8,1) circle(.12);
    \fill[red!100] (8.5,0.5) circle(.12);
    \fill[red!100] (9,1.5) circle(.12);
    \fill[red!100] (9.5,0.2) circle(.12);
    \draw[line width = 0.5mm, ->] (10.5,3) -- ++ (1,0);
    \fill[yellow!30] (12,2.5) --++(5,0) --++(0,2.5)--++(-5,0)--++(0,-2.5);
    \draw[line width = 0.4mm] (12,0) --++(0,5);
    \draw[line width = 0.4mm] (12,0) --++(5,0);
    \draw[black!40] (12,5) --++(5,-5);
    \fill[red!100] (15,4.2) circle(.12);
    \fill[red!100] (15.5,4.1) circle(.12);
    \fill[red!100] (16,4.3) circle(.12);
    \fill[red!100] (16.5,4.04) circle(.12);
    \node at (-0.5,-0.5) {$(0,0)$};
    \node at (-0.5, 2.5) {$b$};
    \node at (-0.5, 5.5) {$b_y$};
    \node at (2,-0.5) {$a$};
    \node at (6,-0.5) {$b_y-\text{length}(a)$};
    \node at (9,-0.5) {$a$};
    \node at (4.3,0) {$(0,0)$};
    \node at (4.5, 5.5) {$b_y$};
    \node at (11.5, -0.5) {$(0,0)$};
    \node at (11.5, 5.5) {$b_y$};
    \node at (11.1, 2.3) {$b_y-p_x$};
    \end{tikzpicture}
    \caption{Illustration of scaling}
    \label{thm4.4}
\end{figure}
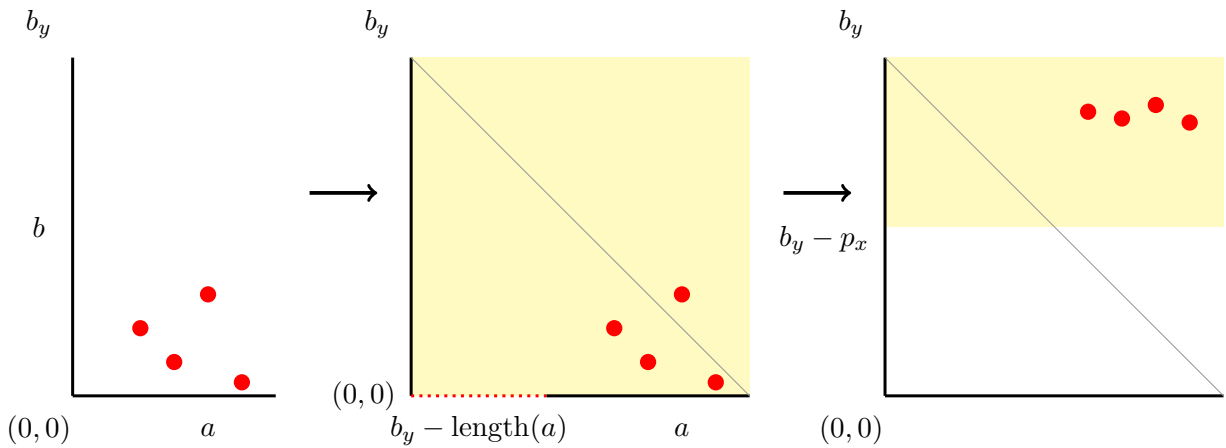
 
The value we want to scale is $b_y-y$ because it is the distance between the specified point and $b_y$. Then, we want the scaling factor to be the ratio between the total length between the $y$-levels $b_y$ and $0$ and the length between the levels $b_y$ and $b_y-p_x$. Therefore, we make the denominator $b_y-0$ and the numerator $b_y-(b_y-p_x)$. Putting this together, we take $b_y$ and subtract the new distance to $b_y$ which gives us the above equation. 

  Thus, the whole section of the knot lying below the hypotenuse now lies above it so all the $z$-sticks are out of our triangle. 
 This new diagram does not necessarily lie in the cubic lattice because we may have non-integer coordinates. Scale everything in the $x$- and $y$-direction by $b_y$. 
 
 Now all our sticks lie back in the lattice. We can apply $T$ and reduce by \fullref{lemma2}. Thus, we have reduced the stick number by 1, proving the theorem. 
\end{proof}

\begin{remark}
Combining the proposition above with Theorem $1$ of \cite{bailey2015stick} yields $5b[K]\le s_{sh}[K] < s_L[K]$, where $b[K]$ is the bridge number of knot type $[K]$. 
\end{remark}
 
 \begin{definition}[Edge, Edge Length]
An \textit{edge} of a polygon in a lattice is a unit-length segment of the polygon between two points in the lattice. The \textit{edge length} of a polygon in a lattice is the total number of edges in the polygon. 

For polygon $\mathcal{P} \in [K]$, we denote $\mathcal E_L(\mathcal{P})$ to be the edge length of a polygon $\mathcal P$ in the cubic lattice, and $\mathcal E_{sh}(\mathcal{P})$ to be the edge length of a polygon $\mathcal P$ in the sh-lattice.
In the lattice $\mathcal{L}$ and direction $\alpha$, the total number of edges of the $\alpha$-sticks of polygon $\mathcal P$ in the lattice $\mathcal{L}$ is denoted as $\mathcal{E}_{\mathcal{L};\alpha}(\mathcal P)$.
Furthermore, $e_L[K]$ and $e_{sh}[K]$ are the minimal edge lengths of a knot type $[K]$ in the cubic and sh-lattices, respectively.
\end{definition}

We conclude this section by proving an analogous strict bound on edge length. 

\begin{proposition}
$T$ preserves edge length.
\end{proposition}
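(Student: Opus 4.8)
The plan is to deduce this directly from \fullref{lemma1}. The key observation is that the edge length of a lattice polygon can be read off purely from its stick word: a single stick of the form $\alpha^{k}$, with $\alpha$ one of the generating directions and $k\in\Z$, is subdivided into exactly $|k|$ unit-length edges by the intermediate lattice points $j\alpha$ for $1\le j\le |k|-1$. Hence for any polygon $\mathcal Q$ whose stick word has exponents $k_1,\dots,k_r$, one has $\mathcal E(\mathcal Q)=\sum_{i=1}^{r}|k_i|$, and this formula does not reference the ambient lattice beyond the fact that the relevant intermediate points are lattice points at unit spacing.

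First I would check that this subdivision statement holds in both lattices. In the cubic lattice it is immediate, since the generators are coordinate unit vectors whose integer multiples are lattice points spaced one unit apart. In the sh-lattice the generating directions are $x$, $y$, $z=y-x$, and $w$; each is a $\Z$-linear combination of $x,y,w$, so its integer multiples lie in $sh$, and each of $x,y,z,w$ has Euclidean norm $1$, so consecutive intermediate points are exactly one unit apart. Thus an $\alpha$-stick $\alpha^{k}$ also consists of $|k|$ edges in the sh-lattice. Next I would invoke \fullref{lemma1}: $\mathcal P_L$ and $\mathcal P_{sh}=T(\mathcal P_L)$ are represented by the same sequence of sticks with the same exponents (each $z$-stick of $\mathcal P_L$ becoming a $w$-stick of $\mathcal P_{sh}$, and no $z$-sticks appearing in $\mathcal P_{sh}$). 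Writing the common exponent sequence as $k_1,\dots,k_r$, the previous paragraph gives $\mathcal E_L(\mathcal P_L)=\sum_{i}|k_i|=\mathcal E_{sh}(\mathcal P_{sh})$, which is the claim. One can equally phrase this directionwise, $\mathcal E_{\mathbb L^3;x}(\mathcal P_L)=\mathcal E_{sh;x}(\mathcal P_{sh})$, $\mathcal E_{\mathbb L^3;y}(\mathcal P_L)=\mathcal E_{sh;y}(\mathcal P_{sh})$, and $\mathcal E_{\mathbb L^3;z}(\mathcal P_L)=\mathcal E_{sh;w}(\mathcal P_{sh})$, mirroring the proof of \fullref{cor2}.

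There is no substantial obstacle; the only point requiring a moment's care is the assertion that a stick's edge count equals the absolute value of its exponent in the sh-lattice, i.e.\ that the generating vectors $x,y,z,w$ genuinely have unit length and that their integer multiples are honest lattice points. This is exactly how the sh-lattice and its sticks were set up in \fullref{definitions}, so the proposition follows with essentially no extra work beyond \fullref{lemma1}.
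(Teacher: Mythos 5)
Your proposal is correct and follows essentially the same route as the paper: both arguments rest on \fullref{lemma1} together with the direction correspondence ($x\mapsto x$, $y\mapsto y$, $z\mapsto w$, nothing to $z$) to conclude that each stick contributes the same number of edges before and after applying $T$, and then sum directionwise. Your explicit check that the sh-lattice generators $x,y,z,w$ have unit length (so a stick's edge count equals the absolute value of its exponent) just spells out a detail the paper leaves implicit.
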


\begin{proof}
Let $\mathcal P_L$ be a knot in the cubic lattice. Then $\mathcal P_{sh} = T(\mathcal P_L)$ has the same knot type but is in the sh-lattice, according to \fullref{prop1}. It now suffices to show $\mathcal{E}_L(\mathcal P_L) = \mathcal{E}_{sh}(\mathcal P_{sh})$, which can be done by examining the number of edges used in the creation of each kind of sticks.

Since no sticks in $\mathcal P_L$, and in particular no edges in $\mathcal P_L$, are mapped to $z$-sticks in the sh-lattice by $T$, $\mathcal{E}_{sh;z}(\mathcal P_{sh}) = 0$. Then, as in \fullref{cor2}, $T$ maps $x$-sticks to $x$-sticks, $y$-sticks to $y$-sticks, and $z$-sticks to $w$-sticks. By \fullref{lemma1}, $T$ preserves stick length, so for a given stick $a$, the number of edges to form $a$ in the cubic lattice and, after applying $T$, in the sh-lattice are equal. It follows that $\mathcal{E}_{L;x}(\mathcal P_L) = \mathcal{E}_{sh;x}(\mathcal P_{sh})$, $\mathcal{E}_{L;y}(\mathcal P_L) = \mathcal{E}_{sh;y}(\mathcal P_{sh})$, and $\mathcal{E}_{L;z}(\mathcal P_L) = \mathcal{E}_{sh;w}(\mathcal P_{sh})$. Therefore, we have 
\begingroup\allowdisplaybreaks
\begin{align*}
    \mathcal{E}_L(\mathcal P_L) &= \mathcal{E}_{L;x}(\mathcal P_L) + \mathcal{E}_{L;y}(\mathcal P_L) + \mathcal{E}_{L;z}(\mathcal P_L) \\
    &= \mathcal{E}_{sh;x}(\mathcal P_{sh}) + \mathcal{E}_{sh;y}(\mathcal P_{sh}) + \mathcal{E}_{sh;w}(\mathcal P_{sh}) + 0 \\
    &= \mathcal{E}_{sh;x}(\mathcal P_{sh}) + \mathcal{E}_{sh;y}(\mathcal P_{sh}) + \mathcal{E}_{sh;w}(\mathcal P_{sh}) + \mathcal{E}_{sh;z}(\mathcal P_{sh}) \\
    &= \mathcal{E}_{sh}(\mathcal P_{sh})
\end{align*}
\endgroup
as desired.
\end{proof}
 
\begin{theorem}
For any knot type $[K]$, $e_{sh}[K] < e_{L}[K]$, where $e_{sh}$ is the edge length of $[K]$ in the simple hexagonal lattice and $e_{L}$ is the edge length of $[K]$ in the cubic lattice. 
\end{theorem}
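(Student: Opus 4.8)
The plan is to mirror the proof of \fullref{conjecture2} almost verbatim, replacing stick-counting by edge-counting. Starting from a cubic-lattice polygon $\mathcal P_L$ realizing $[K]$ with $e_L[K] = \mathcal{E}_L(\mathcal P_L)$ edges, I want to produce an sh-lattice polygon of the same type with \emph{strictly fewer} edges. The natural move is the same ``$xy$-corner $\to$ $z$-stick'' replacement: if an $x$-stick $a$ and a $y$-stick $b$ meet in an ``L'', after applying $T$ the two far endpoints are $(c,0)$ and $(\tfrac c2,\tfrac{c\sqrt3}{2})$ (with $c$ the common leg length in the setup of \fullref{lemma2}), and the segment joining them is exactly a $z$-stick of length $c$ in the sh-lattice — so $c$ $x$-edges plus $c$ $y$-edges get replaced by $c$ $z$-edges, a net loss of $c \ge 1$ edges. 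So the key is to force an equal-length ``L'' configuration with an empty triangle, exactly as in \fullref{conjecture2}.

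First I would invoke the fact that any cubic-lattice polygon has at least one $x$-stick and one $y$-stick meeting at a corner (a properly-leveled knot is non-planar, so it uses all three stick directions, and consecutive sticks along the polygon are of different directions; in particular some $x$-stick is adjacent to some $y$-stick, and after a rotation/reflection this corner is an ``L''). Label the $x$-stick $a$ and the $y$-stick $b$, and WLOG $\mathrm{length}(a)\le \mathrm{length}(b)$. As in the proof of \fullref{conjecture2}, extend every $x$-stick lying on or to the right of the $x$-level of $b$ by $\mathrm{length}(b)-\mathrm{length}(a)$; this changes edge length but does not matter since we only need a strict decrease at the end, and in fact it \emph{adds} the same number of edges it will later let us subtract — I should check the bookkeeping so that the final count still strictly drops (it does: we make $a$ have length $\ell := \mathrm{length}(b)$ by adding $\ell - \mathrm{length}(a)$ $x$-edges, then the corner removal deletes $\ell$ $x$-edges and $\ell$ $y$-edges and inserts $\ell$ $z$-edges, for a net change of $-\mathrm{length}(a) < 0$). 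Then run the identical squeezing isotopy from \fullref{conjecture2} — the piecewise-linear rescaling of $y$-coordinates between the levels $0$ and $b_y$ followed by a global scaling by $b_y$ in the $x$- and $y$-directions — to push all $z$-sticks and all $x$/$y$-stick endpoints out of the isosceles right triangle with legs $a,b$, while keeping everything in the cubic lattice and preserving knot type. This isotopy rescales $x$- and $y$-coordinates uniformly by the integer factor $b_y$, so it multiplies $\mathcal{E}_{L;x}$ and $\mathcal{E}_{L;y}$ each by $b_y$ and leaves $\mathcal{E}_{L;z}$ unchanged; crucially it does not interfere with the sign of the net edge change, since after the corner reduction we still remove strictly more $x,y$-edges than the $z$-edges we add.

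Finally, apply $T$ and \fullref{lemma2} to replace the $xy$-corner of $a$ and $b$ with a single $z$-stick in the sh-lattice. Since $T$ preserves edge length (previous Proposition) and the corner replacement strictly decreases the edge count, the resulting sh-lattice polygon has fewer than $e_L[K]$ edges and represents $[K]$, so $e_{sh}[K] < e_L[K]$.

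The main obstacle — really the only non-routine point — is the edge-count bookkeeping through the ``extend the $x$-sticks'' step and the global scaling by $b_y$: I must confirm that the number of $x$-edges added when equalizing the leg lengths, together with the multiplicative effect of the $b_y$-scaling on the $x$- and $y$-edge totals, still leaves a strict net decrease after the final corner-to-$z$-stick swap. A clean way to handle this is to do the leg-equalization and the empty-triangle isotopy \emph{first}, fixing the resulting (larger) cubic polygon as the new $\mathcal P_L'$, and only then observe that the single corner reduction on $\mathcal P_L'$ removes exactly $2\ell'$ edges and adds exactly $\ell'$ edges where $\ell' \ge 1$ is the (scaled) common leg length — so the net change is $-\ell' < 0$ regardless of how large $\mathcal P_L'$ became. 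Everything else is immediate from \fullref{lemma2}, \fullref{prop1}, and the edge-length-preservation of $T$.
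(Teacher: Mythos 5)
There is a genuine gap, and it is exactly at the point you flagged as ``the only non-routine point.'' Your argument produces a strict edge decrease \emph{relative to the auxiliary polygon} $\mathcal P_L'$, but the theorem requires a polygon with fewer edges than $e_L[K]=\mathcal E_L(\mathcal P_L)$, the count of the \emph{original} minimal polygon. The preparatory moves borrowed from the proof of \fullref{conjecture2} are free for stick number but very costly for edge length: the leg-equalization lengthens every $x$-stick to the right of the relevant level (not just $a$), adding $(\ell-\mathrm{length}(a))\cdot m$ edges for $m$ affected sticks, and the final rescaling by $b_y$ (needed to return the squeezed, non-integer configuration to the lattice) multiplies $\mathcal E_{L;x}$ and $\mathcal E_{L;y}$ by $b_y$. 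After these steps $\mathcal E(\mathcal P_L')$ can vastly exceed $e_L[K]$, so the conclusion ``the corner swap removes $2\ell'$ edges and adds $\ell'$, net $-\ell'<0$ regardless of how large $\mathcal P_L'$ became'' only yields $e_{sh}[K]\le \mathcal E(\mathcal P_L')-\ell'$, which is not $<e_L[K]$. The sentence ``it does not interfere with the sign of the net edge change'' conflates the change relative to $\mathcal P_L'$ with the change relative to $\mathcal P_L$; as written, the argument does not prove the theorem, and even without the rescaling the extension step alone can outweigh the $-\ell$ saved at the corner.

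The fix (and the paper's actual proof) is to abandon the full-stick, equal-length setup entirely: take a minimal-edge cubic polygon, pick any corner $v=(v_x,v_y)$ where an $x$-stick meets a $y$-stick in an ``L,'' and consider only the \emph{unit} isosceles right triangle with vertices $(v_x,v_y)$, $(v_x+1,v_y)$, $(v_x,v_y+1)$. Because its legs have length $1$, no $z$-sticks of the lattice polygon can lie inside it, so \fullref{lemma2} applies with $c=1$ directly to the original polygon: after applying $T$, the two unit edges at the corner are replaced by a single $z$-edge, reducing the edge count by exactly $1$ with no extensions, no squeezing isotopy, and no rescaling. This gives $e_{sh}[K]\le e_L[K]-1<e_L[K]$. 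In short, your corner-to-$z$-stick idea is the right mechanism, but for edge length you must apply it at a unit corner of the untouched minimal polygon rather than importing the stick-number machinery, whose normalizations destroy the edge bookkeeping.
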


\begin{proof}
\fullref{reduceedge} shows the idea of the following proof. Let $\mathcal P$ be a knot polygon in the cubic lattice with minimal edge length. Pick any arbitrary corner $v$ of $\mathcal P$. Note that by rotating the whole knot, we do not change the knot type. Rotate this knot so that the sticks that intersect at $v$ are a $y$- and an $x$-stick in the shape of an ``L" when projected down onto the $xy$-plane. Now, label the coordinates of $v$ to be $(v_x,v_y)$. We can create an isosceles right triangle with the vertices $(v_x,v_y)$, $(v_x+1,v_y)$, $(v_x,v_y+1)$. Because we are in the cubic lattice and this isosceles right triangle has side length $1$, no $z$-sticks can occur within the region of this triangle. Thus, by \fullref{lemma2}, when we apply $T$, we can replace the legs of this triangle with a $z$-stick in the sh-lattice, which reduces the edge length by $1$. 
 \begin{figure}[ht]
\includegraphics[width = \textwidth]{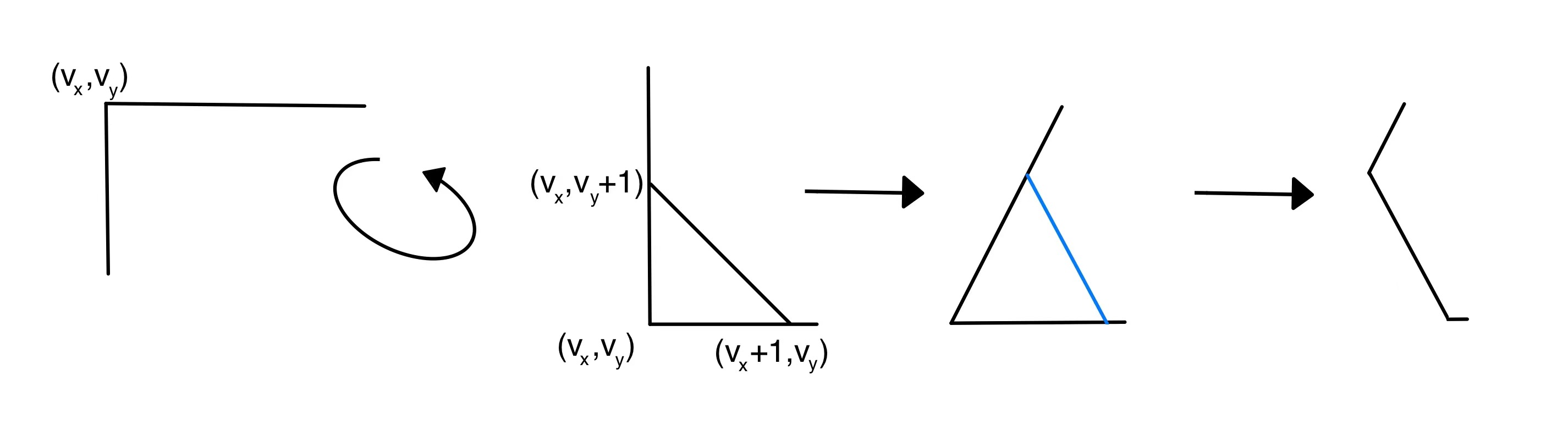}
\caption{Reducing edge length by 1}
\label{reduceedge}
\end{figure}
\end{proof}

Similar to the upper bound, we can find a lower bound on edge length in the sh-lattice for a knot type $[K]$ with respect to the edge length in the cubic lattice. 

\begin{proposition}\label{4.10}
For a non-trivial knot type $[K]$, $e_{sh}[K] \geq \frac{3e_L[K]+30}{8}$. 
\end{proposition}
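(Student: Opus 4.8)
The plan is to run the constructions of \fullref{conjecture2} and its edge-length analogue \emph{in reverse}: starting from an edge-minimal conformation in the sh-lattice, produce a cubic-lattice conformation of the same knot type whose edge length is controlled by the sh data, and then feed in the combinatorial restrictions forced by non-triviality.

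Concretely, let $\mathcal P_{sh}$ be a conformation of $[K]$ in the sh-lattice with $\mathcal E_{sh}(\mathcal P_{sh}) = e_{sh}[K]$, and write $n_\alpha = \mathcal E_{sh;\alpha}(\mathcal P_{sh})$ for $\alpha\in\{x,y,z,w\}$, so $e_{sh}[K] = n_x+n_y+n_z+n_w$. Apply $T^{-1}$ (which exists and is an orientation-preserving homeomorphism since $\det T\neq 0$, as in \fullref{prop1}). Reading \fullref{lemma1} backwards, each $x$-, $y$-, $w$-stick of $\mathcal P_{sh}$ is sent to a genuine cubic $x$-, $y$-, $z$-stick of the same length (hence the same number of edges), but each $z$-stick of length $\ell$ is sent to a \emph{diagonal} segment in direction $\langle -1,1,0\rangle$, which is not a cubic stick. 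Replace each such diagonal by a monotone unit staircase between its endpoints using $\ell$ cubic $(-x)$-edges and $\ell$ cubic $y$-edges. If the resulting polygon $\mathcal P_L$ is embedded and still represents $[K]$, then $e_L[K]\le \mathcal E_L(\mathcal P_L) = n_x+n_y+n_w+2n_z$.

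The heart of the argument — and what I expect to be the main obstacle — is making that last step rigorous: each staircase must be disjoint from the rest of $\mathcal P_L$. Since the diagonal replacing a $z$-stick $S$ has interior disjoint from the rest of $T^{-1}(\mathcal P_{sh})$ (because $S$ was disjoint from the rest of $\mathcal P_{sh}$ and $T^{-1}$ is a homeomorphism), any segment at the level of $S$ that crosses the $\ell\times\ell$ square spanned by $S$ lies strictly on one side of the diagonal, and the $w$-sticks meet that level in isolated points off the diagonal. One then argues that a monotone staircase hugging the diagonal from a suitably chosen side, routed around the finitely many forbidden lattice points, avoids all of this, and that the short isotopy from diagonal to staircase does not change the knot type. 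I would handle any residual conflicts the way the proof of \fullref{conjecture2} does, by a local ``squeeze'' that vacates a neighborhood of each diagonal before inserting its staircase, treating the $z$-sticks one at a time; note that a global rescaling of $\mathcal P_L$ is \emph{not} permissible here, as that would multiply the edge count.

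Finally one supplies the combinatorial input forced by $[K]$ being non-trivial. First, $n_x+n_y\ge 2$: if $n_x+n_y=0$ the polygon lies in the vertical plane spanned by $z$ and $w$, hence is unknotted, and $n_x+n_y=1$ is impossible since one $x$- or $y$-stick together with only $z$-sticks cannot have vanishing total horizontal displacement (the directions $\langle 1,0\rangle$, $\langle \tfrac12,\tfrac{\sqrt3}{2}\rangle$, $\langle -\tfrac12,\tfrac{\sqrt3}{2}\rangle$ make the relevant component sums forbid it). Second, $n_w\ge 4$: the number of local maxima of the height function is at most the number of $(+w)$-sticks, which is at most $n_w/2$, so $n_w/2\ge b[K]\ge 2$ for a non-trivial knot; equivalently, $n_w=2$ edges means a single top arc and hence bridge number $1$, the unknot. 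Thus $n_x+n_y+n_w\ge 6$, so $5(n_x+n_y+n_w)+2n_z\ge 30$, and combining with $n_x+n_y+n_w+2n_z\ge e_L[K]$ from the construction,
\[
8\,e_{sh}[K] = 3\big(n_x+n_y+n_w+2n_z\big) + \big(5(n_x+n_y+n_w)+2n_z\big) \ \ge\ 3\,e_L[K] + 30,
\]
using the identity $8(n_x+n_y+n_z+n_w)=3(n_x+n_y+n_w)+6n_z+5(n_x+n_y+n_w)+2n_z$. Dividing by $8$ gives $e_{sh}[K]\ge \tfrac{3e_L[K]+30}{8}$, as desired.
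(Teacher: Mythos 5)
The crux of your argument is the conversion inequality $e_L[K]\le n_x+n_y+n_w+2n_z$, and this is exactly the step you do not establish. After applying $T^{-1}$, the staircase you want to substitute for a diagonal must pass through specific lattice points and unit segments adjacent to that diagonal, and there is no reason these are free: at the same $w$-level an $x$- or $y$-stick can end on, or a $w$-stick can puncture, \emph{both} corner points of the square spanned by a short $z$-stick, so neither the upper nor the lower monotone staircase is available. ``Routing around the finitely many forbidden points'' costs extra edges, which destroys the factor $2$ per $z$-edge that your final arithmetic depends on; and the ``local squeeze'' you borrow from the proof of \fullref{conjecture2} takes vertices off the integer lattice, which in that proof is repaired precisely by the global rescaling you correctly note you cannot afford here (rescaling multiplies edge length, unlike stick number). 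Even when a staircase is unobstructed, you would still need the triangular region swept by the isotopy to be free of $w$-stick punctures, which is a further unverified condition. A warning sign is that your claimed conversion, if true, would immediately give the much stronger bound $e_{sh}[K]\ge\frac{e_L[K]+6}{2}$; deliberately weakening the conclusion afterwards does not supply the missing proof.

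The paper sidesteps this obstruction by building the rescaling into the conversion itself: each $y$-edge of the sh-conformation is replaced by $xy^2$, each $z$-edge by $y^2x^{-1}$ (and each $x$-edge costs $2$), so the horizontal picture is effectively doubled and there is room to realize the substitutions in the cubic lattice. This conversion is lossier ($2$ per $x$-edge, $3$ per $y$- or $z$-edge), and the paper compensates with two ingredients you do not use: a rotation making the $x$-direction the most common horizontal direction, giving $\mathcal{E}_{sh;x}(\mathcal P)\ge\frac13\bigl(e_{sh}(\mathcal P)-\mathcal{E}_{sh;w}(\mathcal P)\bigr)$, and the bound $\mathcal{E}_{sh;w}(\mathcal P)\ge6$ from Lemmas 1.3 and 1.4 of \cite{mann2012stick}. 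Your closing algebra and your combinatorial observations ($n_x+n_y\ge2$, $n_w\ge4$) are fine, but they carry none of the weight; until you can either prove the staircase substitution goes through without added edges or adopt a conversion that, like the paper's, pays for its own collision-avoidance, the proof is incomplete.
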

\begin{proof}
To represent the knot type $[K]$, we choose polygon $\mathcal{P}$ that has $e_{sh}[K]$ edges. We aim to, while preserving $w$-sticks, transform $w$-levels in the sh-lattice into $z$-levels in the cubic lattice. Before we begin, note that we can rotate our polygon around the vertical axis so that the following is true: 
\[ \mathcal{E}_{sh;x}(\mathcal{P})\ge\frac{1}{3}(e_{sh}(\mathcal{P})-\mathcal{E}_{sh;w}(\mathcal{P})). \]

Now to form each $w$-level in the sh-lattice into a $z$-level in the cubic lattice, we replace each $x$-edge in the sh-lattice by $x^2$ in the cubic lattice, each $y$-edge in the sh-lattice by $xy^2$, and each $z$-edge by $y^2 x^{-1}$. Wherever the results of our substitutions may overlap, simply remove the overlapping edge so there is exactly one edge in its place. With this in mind, we have the following inequalities:
\begin{align*}
e_{L}[K] & \leq \mathcal{E}_L (\mathcal{P}) \\
& \leq \mathcal{E}_{sh;w}(\mathcal{P}) + 2 \mathcal{E}_{sh;x}(\mathcal{P}) + 3 (\mathcal{E}_{sh;y}(\mathcal{P}) + \mathcal{E}_{sh;z}(\mathcal{P})) \\
& = 3 e_{sh}(\mathcal{P}) - 2 \mathcal{E}_{sh;w}(\mathcal{P}) - \mathcal{E}_{sh;x}(\mathcal{P}) \\
& \leq 3 e_{sh}(\mathcal{P}) - 2 \mathcal{E}_{sh;w}(\mathcal{P}) - \frac{1}{3} (e_{sh}(\mathcal{P}) - \mathcal{E}_{sh;w}(\mathcal{P})) \\
& = \frac{8}{3} e_{sh}(\mathcal{P}) - \frac{5}{3} \mathcal{E}_{sh;w}(\mathcal{P}) \\
& \leq \frac{8}{3} e_{sh}(\mathcal{P}) - \frac{5}{3} \times 6 \\
& = \frac{8}{3} e_{sh}(\mathcal{P}) - 10
\end{align*}

Note that $\mathcal{E}_{sh;w}(\mathcal{P}) \geq 6$ by Lemma 1.4 from \cite{mann2012stick}, which establishes that since $\mathcal{P}$ is non-trivial, $|\mathcal{P}|_w \geq 4$, and by Lemma 1.3 from \cite{mann2012stick}, which tells us that $w$-sticks with an endpoint on a boundary level must have at least two edges. By manipulating the final inequality, we find the desired result.
\end{proof}

\section{Classification of 11-stick Knots}\label{specific knots}

In this section, we classify the possible knot types and explore related properties of knots with stick number $11$ in the simple hexagonal lattice. Unless specified otherwise, in what follows all discussion related to the knots, e.g. the stick numbers and other concepts, is in the context of simple hexagonal lattice, and all polygons have eleven sticks.

\begin{proposition}\label{4-1knot}
The stick number of a figure-eight knot in the sh-lattice is $11$, i.e. $s_{sh}(4_1) = 11$. 
\end{proposition}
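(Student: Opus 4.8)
The plan is to prove the two inequalities $s_{sh}(4_1)\le 11$ and $s_{sh}(4_1)\ge 11$ separately.

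For the upper bound I would exhibit an explicit $11$-stick polygon in the sh-lattice representing $4_1$. Concretely, I would give the cyclic list of its sticks together with the integer coordinates of its vertices in the $x,y,w$-basis, chosen so that the polygon is properly leveled in $w$ with only a few $w$-sticks, and then verify the three required properties directly from the data: that the polygon closes up, that it is non-self-intersecting, and that its knot type is $4_1$. The last point is checked by projecting to the $xy$-plane, using the $w$-coordinates to resolve crossings, and identifying a four-crossing alternating diagram of the figure-eight --- equivalently, by reading off the Gauss code or computing the Alexander polynomial $\pm(t^{2}-3t+1)$ from that diagram. A natural way to locate a candidate is to start from a small polygonal figure-eight diagram and exploit the extra $z$-direction available in the sh-lattice, trading $xy$-corners for single $z$-sticks exactly as in \fullref{lemma2} and in the proof of \fullref{conjecture2}, so as to land below any cubic-lattice count; but for the write-up it is cleanest to display the resulting conformation and simply check it.

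For the lower bound I would first invoke the estimate $5b[K]\le s_{sh}[K]$ coming from Theorem~1 of \cite{bailey2015stick} (recorded in the remark after \fullref{conjecture2}): since the bridge number of the figure-eight is $b(4_1)=2$, this gives $s_{sh}(4_1)\ge 10$, so it remains only to exclude a $10$-stick figure-eight. Suppose $\mathcal P$ were such a polygon, taken properly leveled in $w$. Since $4_1$ is non-trivial, Lemma~1.4 of \cite{mann2012stick} gives $|\mathcal P|_w\ge 4$; on the other hand each of the $|\mathcal P|_w$ gaps between consecutive $w$-sticks must contain at least one non-$w$ stick, so $|\mathcal P|_w\le 10-|\mathcal P|_w$, hence $|\mathcal P|_w\le 5$. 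Thus $|\mathcal P|_w\in\{4,5\}$, leaving at most $6$ horizontal ($x$-, $y$-, $z$-) sticks, each pointing in one of only four directions $\pm x,\pm y,\pm z$, to be distributed among the $w$-levels so that each level carries a single connected arc. A finite case analysis over the possible $w$-level structures and over these allocations then shows that $\mathcal P$ must represent the unknot or the trefoil $3_1$, never the figure-eight. This contradiction gives $s_{sh}(4_1)\ge 11$, and combining the two bounds we conclude $s_{sh}(4_1)=11$.

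I expect the main difficulty to be this last case analysis: the bridge-number bound only reaches $10$, and excluding the single remaining band of configurations requires a careful use of the structural constraints on properly-$w$-leveled polygons (the lemmas of \cite{mann2012stick}, together with the connectedness of each level-arc) to control the $xy$-projection obtained from so few horizontal sticks. The upper-bound step, by contrast, is routine once a valid $11$-stick conformation has been written down.
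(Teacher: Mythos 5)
Your upper bound is exactly what the paper does: it exhibits an explicit $11$-stick conformation of $4_1$ (its Figure~\ref{fig:4_1knot11}) and checks nothing further in print, so that half of your plan is fine. The gap is in your lower bound. The paper does not argue through bridge number at all; it simply cites Theorem~3 of \cite{bailey2015stick}, which states that \emph{every} non-trivial knot in the sh-lattice requires at least eleven sticks, and the bound $s_{sh}(4_1)\ge 11$ is immediate. Your route via $5b[K]\le s_{sh}[K]$ only reaches $s_{sh}(4_1)\ge 10$, and you then delegate the entire remaining step --- showing that no $10$-stick properly $w$-leveled polygon can be a figure-eight --- to an unexecuted ``finite case analysis.'' That step is not routine bookkeeping: it is essentially the content of the Bailey--Chaumpanich--Millett-type minimality result you would be re-proving, and the paper's own Section~\ref{specific knots} needs several pages of delicate case analysis (structure of the $w$-sticks, $R$-moves, squares of replacement, the $p$-case, etc.) merely to classify \emph{eleven}-stick polygons. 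Asserting that ten-stick polygons with $|\mathcal P|_w\in\{4,5\}$ and at most six horizontal sticks ``must represent the unknot or the trefoil'' without carrying out that analysis leaves the proof incomplete; moreover the constraints you list (four horizontal directions, one connected arc per level) do not by themselves rule out a figure-eight, so the conclusion cannot be waved through.

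The fix is easy and is what the paper does: replace your bridge-number argument and the sketched case analysis by a direct appeal to Theorem~3 of \cite{bailey2015stick} (any non-trivial sh-lattice knot has stick number at least $11$), after which the explicit $11$-stick conformation finishes the proof. Alternatively, if you insist on a self-contained lower bound, you must actually perform the exclusion of $10$-stick non-trivial polygons, which amounts to redoing that cited theorem and is a substantially larger task than your proposal acknowledges.
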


\begin{proof}
\fullref{fig:4_1knot11} is a knot presentation of $4_1$ in sh-lattice with eleven sticks. The $w$-sticks are colored black, and for others, sticks in the same plane parallel to the $xy$-plane are marked with the same color.
Moreover, by Theorem 3 in \cite{bailey2015stick}, any non-trivial knot in the simple hexagonal lattice should have at least eleven sticks. Therefore, $s_{sh}(4_1) = 11$. 
\begin{figure}[ht]
    \centering
    \includegraphics[scale = 0.4]{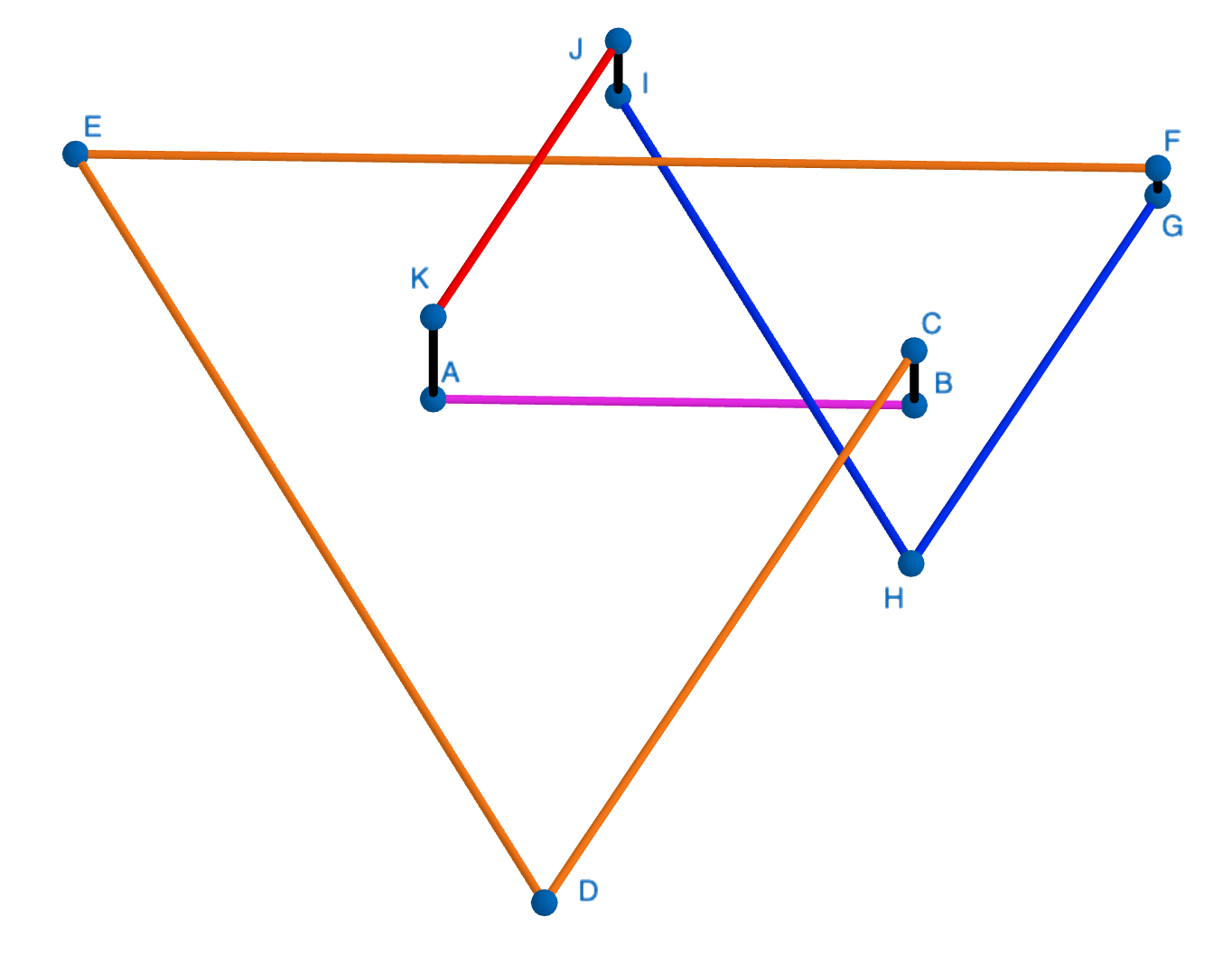}
    \caption{$4_1$ knot in sh-lattice with 11 sticks}
    \label{fig:4_1knot11}
\end{figure}
\end{proof}

Recall that we have the following results in the cubic lattice:

\begin{theorem*}[Huang \& Yang, 2017, \cite{huang2017lattice}]
The only non-trivial knot types $[K]$ with stick number $s_L[K] \le 15$ in the cubic lattice are $3_1$ and $4_1$.
\end{theorem*}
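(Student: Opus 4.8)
The plan is to reduce the statement to a finite case analysis organized by the triple $(n_x,n_y,n_z)$ of stick counts in the three coordinate directions, using a bridge-number estimate to cut the list of possible knot types down to size. The engine is a sign-counting observation. Orient the polygon and, for a fixed axis --- say $z$ --- record the cyclic word of signs, $+$ for an ascending $z$-stick and $-$ for a descending one. Perturbing the polygon slightly so that the $z$-coordinate is a Morse function, each occurrence of the pattern $+\,-$ in this cyclic word accounts for exactly one local maximum of the height function, while the horizontal excursions between consecutive $z$-sticks can be chosen monotone and contribute none; hence the number of maxima equals the number of $+\,-$ patterns, which is at most $\min(\#\{+\},\#\{-\})$. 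Consequently, a polygon using at most three $z$-sticks has at most one maximum and is unknotted, and a polygon using at most five $z$-sticks satisfies $\operatorname{br}[K]\le 2$.

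Applying the first consequence in each of the three directions, a non-trivial polygon uses at least four sticks in every direction, so $s_L[K]\ge 12$; and if $s_L[K]\le 15$, then up to permuting axes $(n_x,n_y,n_z)$ is one of $(4,4,4),(4,4,5),(4,4,6),(4,4,7),(4,5,5),(4,5,6),(5,5,5)$, each of which has some direction with at most five sticks. Applying the second consequence in that direction, every non-trivial knot type with $s_L[K]\le 15$ has bridge number at most $2$, hence is a prime alternating $2$-bridge knot.

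Next I would bound the crossing number. Projecting along the short axis and stacking the level arcs presents $[K]$ by an arc/grid presentation whose size is controlled by the stick counts; combined with $\alpha(K)=c(K)+2$ for alternating knots and with $n_x+n_y+n_z\le 15$, this yields an absolute bound $c[K]\le C$ for a small explicit constant. The $2$-bridge knots with crossing number at most $C$ are finitely many and fully tabulated --- $3_1,4_1,5_1,5_2$, and, depending on how sharp $C$ is made, possibly also $6_1,6_2,6_3,7_1,\dots$ --- so the proof reduces to showing that none of these except $3_1$ and $4_1$ admits a cubic lattice polygon with at most $15$ sticks.

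This elimination is the laborious part. For each surviving candidate $K$ and each admissible triple I would rule out a $\le 15$-stick representative by combining (i) lower bounds for $s_L(K)$ from its arc index --- for instance $\alpha(5_1)=\alpha(5_2)=7$ and $\alpha(6_i),\alpha(7_i)\ge 8$, fed into an inequality of the shape $s_L(K)\ge 2\alpha(K)+\text{const}$ --- with (ii) a direct enumeration of the finitely many sign patterns of the $z$-sticks in the short direction, each of which pins down a shallow layered diagram with only a few crossings whose knot type can be computed by hand. I expect (ii) to be the main obstacle: the triples $(4,5,6)$ and $(5,5,5)$, where no direction is as tightly constrained as in the minimal presentations of $3_1$ and $4_1$, carry the bulk of the bookkeeping and are likely handled most cleanly by a computer search, as in the original argument of Huang and Yang. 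Assembling all the cases yields the theorem.
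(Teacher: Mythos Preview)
This theorem is not proved in the paper; it is quoted verbatim from Huang and Yang \cite{huang2017lattice} and used as a black box in Section~\ref{specific knots}. So there is no ``paper's own proof'' to compare against, and your proposal should be assessed on its own merits as an independent argument.

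As such, what you have written is a plan rather than a proof, and the plan has a genuine gap at exactly the point you flag. The bridge-number reduction via sign patterns is sound and standard: it correctly gives $n_x,n_y,n_z\ge 4$ for a non-trivial polygon and $\operatorname{br}[K]\le 2$ whenever $s_L[K]\le 15$, cutting the candidates down to $2$-bridge knots. But from there your argument is schematic. The crossing-number bound is asserted (``yields an absolute bound $c[K]\le C$'') without actually producing the inequality relating $s_L$ to arc index or deriving a concrete $C$; and the elimination step --- showing that $5_1,5_2,6_1,\ldots$ admit no $\le 15$-stick cubic polygon --- is explicitly deferred to ``a computer search, as in the original argument of Huang and Yang.'' That is the entire content of the theorem: the global invariant bounds only get you to a short list, and ruling out $5_1$ and $5_2$ in particular (which \emph{do} satisfy $\operatorname{br}=2$ and have small arc index) requires a genuine geometric argument or exhaustive enumeration that you have not supplied. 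Until that step is actually carried out, the proposal does not constitute a proof.
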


By \cite{bailey2015stick} and \cite{mann2012stick}, we know the trefoil knot $3_1$ has stick number $11$ in the sh-lattice. Combined with \fullref{4-1knot} above, we will prove a result analogous to the above theorem but in the sh-lattice. That is, the only non-trivial knot types $[K]$ with stick number $s_{sh}[K] \le 11$ in the sh-lattice are $3_1$ and $4_1$. In pursuit of this result, we begin by determining the number of $w$-sticks required in an 11-stick polygon.

\begin{proposition}\label{stickequallevel}
Suppose that $\mathcal P$ is any polygon properly leveled with respect to $w$, then the number of $w$-sticks is equal to the number of $w$-levels.
\end{proposition}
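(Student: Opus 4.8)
The plan is to fix a normalization of the $w$-levels, record three elementary facts about how $w$-sticks attach to the horizontal slices of $\mathcal P$, and then close the argument with a single double count on endpoints.

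First I would set up notation. After a vertical translation we may assume the $w$-levels occupied by $\mathcal P$ are $L_1 < L_2 < \cdots < L_n$, so that ``the number of $w$-levels'' is $n$, and the goal is to show $\mathcal P$ has exactly $n$ $w$-sticks. For each $i$, the hypothesis that $\mathcal P$ is properly leveled with respect to $w$ says that $A_i := \mathcal P \cap \{w = L_i\}$ is a single connected polygonal arc; I will read this in the literal topological sense that $A_i$ is homeomorphic to $[0,1]$. In particular $A_i$ has exactly two endpoints, these are distinct, and $A_i$ is neither a single point nor a closed curve. In passing this already excludes a planar $\mathcal P$, whose unique slice would be a closed polygon rather than an arc, and it excludes a level through which a $w$-stick merely passes without contributing an arc, since such a slice would be a finite point set rather than an honest arc. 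It is exactly here that the proper-leveling hypothesis does its real work, and this is the point that needs the most care.

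Next I would record the structural facts. (a) No vertex of $\mathcal P$ is an endpoint of two distinct $w$-sticks: two $w$-sticks sharing a vertex are either collinear, contradicting maximality of sticks, or point in opposite $w$-directions, which would force $\mathcal P$ to retrace itself and hence fail to be embedded. (b) Every endpoint of a $w$-stick is an endpoint of some $A_i$: at such a vertex the stick on the other side is not a $w$-stick (by maximality and (a)), hence is an $x$-, $y$-, or $z$-stick, so it lies in a single plane $\{w = L_i\}$ and thus in $A_i$; moreover the vertex is an endpoint of $A_i$ since $\mathcal P$ leaves the plane $\{w = L_i\}$ there. (c) Conversely, each of the two endpoints of each $A_i$ is an endpoint of a $w$-stick: at an endpoint of $A_i$ the polygon leaves the plane $\{w = L_i\}$, so the outgoing stick is parallel to $w$.

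Finally, the count. Write $W$ for the number of $w$-sticks of $\mathcal P$. Each $w$-stick has two distinct endpoints, and by (a) endpoints of different $w$-sticks are distinct, so the $w$-sticks contribute $2W$ distinct vertices. By (b) each of these $2W$ vertices is an endpoint of one of the arcs $A_1, \dots, A_n$; since the $A_i$ lie in distinct horizontal planes they are pairwise disjoint, so together they have exactly $2n$ distinct endpoints. By (c) every one of these $2n$ arc endpoints is hit. Hence the set of $w$-stick endpoints coincides with the set of arc endpoints, giving $2W = 2n$ and therefore $W = n$, as claimed. The only genuinely delicate step is the one flagged above — extracting from ``single connected polygonal arc'' that every $w$-level actually carries a two-endpoint arc and that no spurious levels occur; once that is pinned down, the rest is a routine incidence count.
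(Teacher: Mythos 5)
Your core argument is essentially the paper's: both proofs come down to the observation that each occupied $w$-level carries exactly two $w$-stick endpoints, and then count the $2W$ endpoints of the $W$ $w$-sticks against the levels. You phrase this as a direct bijection between $w$-stick endpoints and arc endpoints, where the paper argues by contradiction that a level can have neither fewer than two nor more than two endpoints; your facts (a)--(c) and the double count are correct and, if anything, tidier than the paper's informal handling of the ``odd'' and ``even composite'' cases.

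The one genuine problem is the step you yourself flag: reading the hypothesis as saying that $A_i := \mathcal P \cap \{w = L_i\}$ is homeomorphic to $[0,1]$. That reading is too strong. A $w$-stick that spans more than one level (say $w_{13}$) punctures each intermediate occupied plane in an isolated interior point, so for such a polygon the full slice at that level is an arc together with finitely many isolated points, and is not an arc in your literal sense, even though the polygon is properly leveled in the intended sense (the ``single connected polygonal arc'' refers to the union of the $x$-, $y$-, and $z$-sticks at that level). These are not edge cases: the non-trivial $11$-stick polygons of \fullref{specific knots}, whose $w$-sticks are $w_{13}$, $w_{23}$, $w_{24}$, $w_{14}$, are exactly of this kind, and the proposition is applied to them in \fullref{4stick4level}; so your proof as literally stated does not cover the situations for which the statement is needed. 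The repair is immediate and costs nothing: define $A_i$ to be the union of the non-$w$ sticks of $\mathcal P$ at level $L_i$. By the hypothesis this is still a single arc with two distinct endpoints, your facts (a)--(c) hold verbatim (the puncture points are interior points of $w$-sticks, hence are endpoints neither of $w$-sticks nor of any $A_i$, and they cannot lie on any $A_i$ without violating embeddedness), and the count $2W = 2n$ goes through unchanged. With that adjustment your proof is complete and matches the paper's in substance.
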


\begin{proof}
Suppose we have $n$ $w$-sticks in $\mathcal P$. Note that $\mathcal P$ cannot have more than $n$ $w$-levels, as if it did, the polygon would not be closed or would be reducible. Suppose for the sake of contradiction that $\mathcal P$ has fewer than $n$ $w$-levels.

We know that each $w$-stick has two endpoints. In other words, each $w$-stick has an endpoint on two distinct $w$-levels. Moreover, a given $w$-level cannot have fewer than two endpoints of $w$-sticks, since such a level would be disconnected from the rest of the knot. Thus, as there are $2n$ $w$-stick endpoints and each $w$-level must have at least two endpoints, there must be at most $n$ $w$-levels with at least two $w$-stick endpoints a piece.

Suppose there exists some $w$-level with an odd number of endpoints. Then the polygon is no longer a knot since the arc of the knot on that $w$-level must cause the knot to branch at one of the endpoints or not pass through an endpoint, making it no longer part of the rest of the knot. If there are an even composite number of endpoints, connected in pairs by disjoint arcs, then the knot is not properly leveled. Hence, there cannot be $w$-levels with more than two endpoints, and the result is true.
\end{proof}

\begin{remark}
    Note that if a properly leveled 11-stick polygon has more than five $w$-sticks, then there is an insufficient number of non-$w$-sticks to connect the $w$-sticks (there must be at least distinct six $w$-levels, but only at most five non-$w$-sticks), so a properly leveled 11-stick polygon can have at most five $w$-sticks. 
\end{remark}

Before continuing, we introduce a useful tool, as defined in \cite{huh2005lattice}.

\begin{definition} [$R$-move] \label{R-move}
Let $s$ and $t$ be connected, perpendicular sticks in a polygon $\mathcal{P}$. Also, let $R$ be the rectangle with $s$ and $t$ as sides, where $s'$ and $t'$ denote the sides opposite to $s$ and $t$ in $R$ respectively. Note that $R$ is unique. If $R$ does not intersect with any other part of $\mathcal{P}$, exchange $s$ and $t$ of $\mathcal{P}$ with $s'$ and $t'$ to get a new polygon $\mathcal{P'}$. Since $R$ does not intersect with any other part of $\mathcal{P}$, $\mathcal{P}'$ is equivalent to $\mathcal{P}$. This operation is called an \textit{$R$-move} of $s$ and $t$. Note that we will also refer to this as an $R$-move on the rectangle $R$, with $s$ and $t$ the sides of $R$ in the polygon $\mathcal{P}$ before the $R$-move.
\end{definition}

\begin{corollary}\label{specificsticks}
A non-trivial irreducible $11$-stick polygon $\mathcal P$ with four $w$-sticks has $w$-sticks exactly $w_{13}$, $w_{23}$, $w_{24}$, and $w_{14}$. 
\end{corollary}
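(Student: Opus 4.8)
The plan is to reduce the statement to a finite combinatorial enumeration of how four $w$-sticks can join four $w$-levels, and then to eliminate the unwanted configurations geometrically by exhibiting reductions, reusing the $R$-move bookkeeping developed for \fullref{5w_stick}.

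First I would pin down the combinatorics of the $w$-sticks. By \fullref{4stick4level}, $\mathcal P$ has exactly four $w$-sticks and four $w$-levels. Exactly as in the proof of \fullref{stickequallevel}, every $w$-level must carry at least two $w$-stick endpoints; since there are $2\cdot 4 = 8$ endpoints distributed among four levels, each $w$-level carries exactly two. Viewing the four $w$-levels as vertices and the four $w$-sticks as edges, we obtain a loopless $2$-regular multigraph on four labeled vertices. Such a multigraph is a disjoint union of cycles, and a double edge would force the other two edges to be a double edge on the remaining two vertices, making $\mathcal P$ disconnected — impossible. Hence the multigraph is a single $4$-cycle through all four levels, and there are exactly three of these, giving the $w$-stick sets
\[
A=\{w_{12},w_{23},w_{34},w_{14}\},\qquad B=\{w_{13},w_{23},w_{24},w_{14}\},\qquad C=\{w_{12},w_{13},w_{24},w_{34}\}.
\]
Set $B$ is precisely the claimed configuration, so it remains to rule out $A$ and $C$.

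To eliminate $A$ and $C$ I would show each forces $\mathcal P$ to be reducible, contradicting irreducibility. Note that $\mathcal P$ has only $11-4=7$ non-$w$-sticks spread over four $w$-levels with at least one per level, so the number of non-$w$-sticks on a level is controlled by a partition of $7$ into four positive parts — only $(4,1,1,1)$, $(3,2,1,1)$, $(2,2,2,1)$ — leaving very few geometric possibilities. Both $A$ and $C$ contain a $w$-stick $w_{i,i+1}$ joining two consecutive $w$-levels (three of them in the case $A$). Fixing such a stick, I would collapse $w$-level $i$ onto $w$-level $i+1$ by an ambient isotopy that shrinks $w_{i,i+1}$ to a point and splices the arc of $\mathcal P$ on level $i$ onto the arc on level $i+1$ at that point; this drops the stick count to at most $10$. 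The isotopy is legitimate unless the projections of the two arcs overlap beyond their shared endpoint or a surviving $w$-stick blocks the motion, and in those exceptional sub-cases one first performs $R$-moves (\fullref{R-move}) on the obstructing rectangles — exactly the kind of manipulation carried out repeatedly in \fullref{5w_stick} and \fullref{5stickcase} — to clear the way. Because the non-$w$-stick budget is so small, only finitely many sub-cases occur, and each yields either an immediate reduction or a forced collision between two surviving $w$-sticks (as happened again and again in the five-$w$-stick analysis), both contradictions.

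The main obstacle is this last, geometric step: organizing the sub-cases for $A$ and $C$ so that the verification that the collapsing isotopy — possibly after a preliminary $R$-move — is unobstructed is both exhaustive and not unreasonably long. This is where the tight count of seven non-$w$-sticks does the real work and where the diagrams and reduction moves already used for the five-$w$-stick case can be recycled. I expect $C$ to be the more delicate of the two, since, unlike $A$, it does not contain $w_{14}$ and its two short $w$-sticks $w_{12}$ and $w_{34}$ sit at opposite ends of the level stack, so one must check that collapsing at one end does not run afoul of the constraints imposed at the other.
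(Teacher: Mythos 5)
Your combinatorial first half is sound and essentially matches the paper: with exactly four $w$-sticks and four $w$-levels (\fullref{4stick4level}) and exactly two $w$-stick endpoints per level (as in \fullref{stickequallevel}), the incidence multigraph is $2$-regular, repeated levels force a disconnected polygon, and the three $4$-cycles you list are exactly the paper's configurations $(a)$, $(b)$, $(c)$. (The paper gets the no-repeated-levels step by citing Lemma 1.2 of \cite{mann2012stick}; your multigraph argument is an acceptable self-contained substitute.)

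The genuine gap is the elimination of your configurations $A$ and $C$. You do not prove it; you propose a collapse-one-level isotopy and assert that whenever the pushed-down arc or a surviving $w$-stick obstructs it, ``$R$-moves clear the way'' and that ``only finitely many sub-cases occur,'' each ending in a reduction or a collision. None of this is verified, and it is not automatic: an $R$-move (\fullref{R-move}) is itself only available when its rectangle meets no other part of $\mathcal P$, so claiming you can always clear obstructions by $R$-moves is exactly the assertion that needs the case analysis you explicitly defer as ``the main obstacle.'' You also do not explain why a blocked collapse must terminate in a ``forced collision between two surviving $w$-sticks.'' The paper closes this step in one line: both excluded configurations contain $w_{12}$ and $w_{34}$, i.e.\ $w$-sticks of a single edge with an endpoint on a boundary $w$-level, and Lemma 1.3 of \cite{mann2012stick} says such sticks cannot occur in a non-trivial properly leveled polygon; only the configuration $w_{13}, w_{23}, w_{24}, w_{14}$ avoids them. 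To complete your proof you should either invoke that lemma (at which point your second half collapses to a sentence) or actually carry out the finite case analysis over the partitions $(4,1,1,1)$, $(3,2,1,1)$, $(2,2,2,1)$ of the seven non-$w$-sticks, with explicit verification in each sub-case that the collapse or a preparatory $R$-move is unobstructed; as written, the statement is not yet proved for $A$ or $C$.
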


\begin{proof}
Lemma $1.2$ in \cite{mann2012stick} says that there are no two $w$-sticks in $\mathcal P$ with their endpoints on the same $w$-levels. Because $\mathcal{P}$ has four $w$-sticks, there are only three possible combinations of the $w$-sticks in $\mathcal P$:
\begin{enumerate} [label = (\alph*)]
    \item $w_{12}$, $w_{23}$, $w_{34}$, $w_{14}$.
    
    \item $w_{12}$, $w_{24}$, $w_{34}$, $w_{13}$, 
    
    \item $w_{13}$, $w_{23}$, $w_{24}$, $w_{14}$.
\end{enumerate}
By Lemma $1.3$ in \cite{mann2012stick}, both $(a)$ and $(b)$ are the unknot. Therefore, every non-trivial irreducible polygon $\mathcal P$ with eleven sticks and four $w$-sticks has the said configuration. 
\end{proof}

\begin{lemma}
\label{atleastonelemma}
A non-trivial $11$-stick polygon $\mathcal{P}$ satisfies $\min\{|\mathcal P|_x,|\mathcal P|_y,|\mathcal P|_z\}\ge 1$. 
\end{lemma}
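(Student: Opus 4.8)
The plan is to reduce the three inequalities to a single one by symmetry and then obtain a contradiction via the transformation $T$ together with the known lower bound on cubic stick number. First I would record the relevant symmetry: the rotation of $\R^3$ by $120^\circ$ about the $w$-axis is an automorphism of the sh-lattice. It fixes the $w$-direction and cyclically permutes the $x$-, $y$-, and $z$-directions (up to sign), so it sends any polygon $\mathcal P$ to a polygon representing the same knot type with $|\mathcal P|_x$, $|\mathcal P|_y$, $|\mathcal P|_z$ cyclically permuted (and $|\mathcal P|_w$ unchanged). Hence the statements ``$|\mathcal P|_x\ge 1$'', ``$|\mathcal P|_y\ge 1$'', ``$|\mathcal P|_z\ge 1$'' are equivalent for a non-trivial $11$-stick polygon, and it suffices to prove $|\mathcal P|_z\ge 1$.

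Suppose toward a contradiction that $\mathcal P$ is a non-trivial $11$-stick polygon with $|\mathcal P|_z=0$. Then every stick of $\mathcal P$ is an $x$-, $y$-, or $w$-stick, and all vertices of $\mathcal P$ lie in the sh-lattice. Applying $T^{-1}$ — which, like $T$, is an orientation-preserving linear homeomorphism of $\R^3$ and which carries the sh-lattice basis $x,y,w$ to the cubic-lattice basis — therefore produces a polygon $\mathcal Q=T^{-1}(\mathcal P)$ lying in $\mathbb L^3$, built from $x$-, $y$-, and $z$-sticks. Running \fullref{prop1} and \fullref{cor2} in reverse, $\mathcal Q$ represents the same knot type as $\mathcal P$ — in particular $\mathcal Q$ is non-trivial — and $|\mathcal Q| = |\mathcal P| = 11$. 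So $\mathcal Q$ is an $11$-stick non-trivial cubic lattice knot.

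It remains to exclude this. By the Huang--Yang theorem, the only non-trivial knot types with cubic stick number at most $15$ are $3_1$ and $4_1$; and combining \fullref{conjecture2} with the equalities $s_{sh}(3_1)=11$ (from \cite{bailey2015stick} and \cite{mann2012stick}) and $s_{sh}(4_1)=11$ (\fullref{4-1knot}) gives $s_L(3_1)\ge 12$ and $s_L(4_1)\ge 12$. Hence every non-trivial knot type has cubic stick number at least $12$, contradicting $|\mathcal Q|=11$. Therefore $|\mathcal P|_z\ge 1$, and by the symmetry reduction $\min\{|\mathcal P|_x,|\mathcal P|_y,|\mathcal P|_z\}\ge 1$.

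I expect the main point needing care to be the symmetry step — checking that the $120^\circ$ rotation genuinely preserves the sh-lattice and realizes a cyclic permutation of the $x$-, $y$-, $z$-labels while fixing $w$, so that the reduction to the single case $|\mathcal P|_z=0$ is legitimate — together with the bookkeeping that a $z$-stick-free sh-polygon pulls back under $T^{-1}$ to a bona fide cubic-lattice knot. A more hands-on alternative would instead start from \fullref{specificsticks} (four $w$-levels, $w$-sticks $w_{13},w_{23},w_{24},w_{14}$, and seven non-$w$ sticks distributed one-arc-per-level) and argue directly that these seven sticks cannot avoid any of the three horizontal directions, but the $T^{-1}$ argument above avoids that case analysis.
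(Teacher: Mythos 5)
Your proof is correct and follows essentially the same route as the paper: reduce to the $z$-stick-free case by a lattice-preserving rotation about the $w$-axis, pull back by $T^{-1}$ to obtain a non-trivial $11$-stick cubic lattice knot, and contradict the known cubic-lattice lower bound. The only difference is in the final citation: the paper invokes \cite{huh2005lattice} directly for the nonexistence of non-trivial $11$-stick cubic knots, whereas you rederive $s_L[K]\ge 12$ from the Huang--Yang theorem together with the paper's strict inequality $s_{sh}[K]<s_L[K]$ and $s_{sh}(3_1)=s_{sh}(4_1)=11$; this is valid and non-circular, since those ingredients do not depend on the present lemma.
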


\begin{proof}
If $\mathcal{P}$ has no $z$-sticks, then this implies there exists a non-trivial polygon in the cubic lattice with eleven sticks. (In particular, it is the preimage of $\mathcal{P}$ with respect to linear transformation $T$.) This is false by results in \cite{huh2005lattice}. Similarly, if $\mathcal{P}$ has no $x$-sticks or no $y$-sticks, we rotate the polygon so that there are no $z$-sticks in the knot instead.
\end{proof}

\begin{lemma}
A non-trivial $11$-stick polygon $\mathcal{P}$ with four $w$-sticks must have at least two sticks for two of $x$-sticks, $y$-sticks, and $z$-sticks and at least one stick for the remaining stick type. 
\end{lemma}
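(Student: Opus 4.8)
The plan is to pin down the stick-vector of $\mathcal P$ as tightly as possible, identify the single "bad" case left over, and kill it with a height-function (bridge number) argument.

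First I would note that a non-trivial $11$-stick polygon is automatically irreducible: every non-trivial knot type needs at least $11$ sticks in the sh-lattice (by \cite{bailey2015stick} and \cite{mann2012stick}), so an $11$-stick presentation of a non-trivial knot is minimal. Hence \fullref{4stick4level} and \fullref{specificsticks} apply, so $|\mathcal P|_w = 4$ and therefore $|\mathcal P|_x + |\mathcal P|_y + |\mathcal P|_z = 7$. By \fullref{atleastonelemma} each of these three numbers is at least $1$. If the lemma were false, at most one of them would be $\ge 2$; but three positive integers summing to the odd number $7 > 3$ with at most one exceeding $1$ must be $\{1,1,5\}$. Using the $120^\circ$ rotational symmetry of the sh-lattice about the $w$-axis, which carries sh-polygons to sh-polygons and cyclically permutes the $x$-, $y$-, and $z$-directions, I may assume without loss of generality that $|\mathcal P|_x = 5$ and $|\mathcal P|_y = |\mathcal P|_z = 1$.

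Next I would contradict non-triviality. Let $h\colon \R^3 \to \R$ be projection onto the second coordinate. Among the stick directions $x=\langle 1,0,0\rangle$, $y=\langle \tfrac12,\tfrac{\sqrt3}{2},0\rangle$, $z=\langle -\tfrac12,\tfrac{\sqrt3}{2},0\rangle$, $w=\langle 0,0,1\rangle$, only $y$- and $z$-sticks change $h$. Traversing $\mathcal P$ once, $h$ is therefore constant except along the unique $y$-stick and the unique $z$-stick, where it moves monotonically; since $\mathcal P$ closes up, these two changes are nonzero and cancel. So $h$ takes exactly two values $\beta < \beta'$ on $\mathcal P$: $\mathcal P$ is one arc in the plane $\{h=\beta\}$, one arc in the plane $\{h=\beta'\}$, and two monotone segments joining them. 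After a small perturbation $h|_{\mathcal P}$ becomes a Morse function with a single local maximum and a single local minimum, so $\mathcal P$ sits in $1$-bridge position and is therefore the unknot, a contradiction. Combined with \fullref{atleastonelemma}, this is exactly the assertion of the lemma.

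The main obstacle is the last step — passing from "$\mathcal P$ lies in two parallel planes joined by two arcs" to "$\mathcal P$ is trivial". Routing it through bridge number is clean but uses the classical fact that the unknot is the only knot of bridge number one; alternatively one can isotope $\mathcal P$ into a single plane, but then some care is needed because the two planar arcs may cross once superimposed, so the isotopy must slide one of them aside inside a thin slab. I would also want to double-check the two WLOG reductions: that "non-trivial $11$-stick" genuinely forces irreducibility (hence the $w$-stick count), and that the rotational symmetry really normalizes the bad vector to $|\mathcal P|_x = 5$ — since the height-function argument needs the two singleton directions to be precisely $y$ and $z$, there being no horizontal direction orthogonal to both $x$ and $y$.
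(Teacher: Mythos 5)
Your proposal is correct, but it reaches the key conclusion by a genuinely different route than the paper. You and the paper agree on the setup: non-triviality plus the minimality bound $s_{sh}\ge 11$ makes the polygon irreducible, so \fullref{4stick4level} gives $|\mathcal P|_x+|\mathcal P|_y+|\mathcal P|_z=7$, \fullref{atleastonelemma} rules out a zero count, and the only configuration violating the lemma is $\{1,1,5\}$, normalized by the rotational symmetry of the sh-lattice about the $w$-axis so that $|\mathcal P|_x=5$, $|\mathcal P|_y=|\mathcal P|_z=1$ (the paper makes the same ``five $x$-sticks'' reduction, and your explicit justification of irreducibility and of why the singleton types must be exactly $y$ and $z$ fills in details the paper leaves implicit). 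From there the paper argues combinatorially: it locates the $w$-level carrying two $x$-sticks, analyzes the possible $xyzx$/$xzyx$ versus $xyx$/$xzx$ sequences on that level, and shows case by case (with figures) that the polygon either reduces to ten sticks or is visibly an unknot projection. You instead observe that the linear functional $h$ given by the second coordinate is constant on $x$- and $w$-sticks and strictly monotone on the unique $y$- and $z$-sticks, whose contributions must cancel around the closed polygon; hence $\mathcal P$ consists of two planar arcs in the levels $\{h=\beta\}$ and $\{h=\beta'\}$ joined by two monotone segments, and after pushing the interiors of the two planar arcs slightly out of their levels (possible without creating intersections, since no point of $\mathcal P$ lies beyond those levels) $h$ has a single maximum and a single minimum, so $\mathcal P$ has bridge number one and is the unknot, contradicting non-triviality. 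This height-function argument is cleaner and less error-prone than the paper's picture-driven case analysis, and it proves the stronger statement that \emph{any} sh-polygon (of any stick count) with exactly one $y$-stick and one $z$-stick is trivial; its only costs are the appeal to the classical fact that one-bridge knots are trivial and the small amount of care you already flag in making the perturbation to a Morse-type position precise (your phrase ``$h$ takes exactly two values'' should be read as ``the complement of the two monotone sticks lies in two parallel planes,'' which is what you in fact use).
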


\begin{proof}
Since we have four $w$-sticks, we know $|\mathcal P|_x+|\mathcal P|_y+|\mathcal P|_z = 7$. Suppose, towards contradiction, that the proposition is false, then either only two of the three types of sticks are used, or exactly two types of stick have stick number $1$. The first case is impossible in \fullref{atleastonelemma}. We now prove the second case to be impossible as well. 

Without loss of generality, we assume that $\mathcal P$ has five $x$-sticks. Recall that $\mathcal P$ should have four $w$-levels. Observe that exactly one of the $w$-levels has two $x$-sticks, therefore requiring the presence of a $y$- or $z$-stick on the level. Suppose the level has a sequence $xyzx$ or $xzyx$, then because we only have two $w$-sticks not attached to this level, there is always a way for us to reduce the stick number of polygon to $10$, resulting in the unknot. Therefore, we can suppose the level has an $xyx$ or $xzx$ sequence. We assume that the polygon has the $xyx$ sequence, and the other case would follow with the same proof. Note that this particular $w$-level should not be on the boundary, otherwise we can reduce the stick number by $1$. We hereby assume the sequence is on $w$-level $2$ (the proof for the $w$-level $3$ case is the same). Now, the view on level $2$ looks like \fullref{xyx1} as the $xyx$ sequence passes between $w_{13}$ and $w_{14}$, preventing the stick number from being reduced. 
\begin{figure}[ht]
    \centering
    \includegraphics[height=5cm]{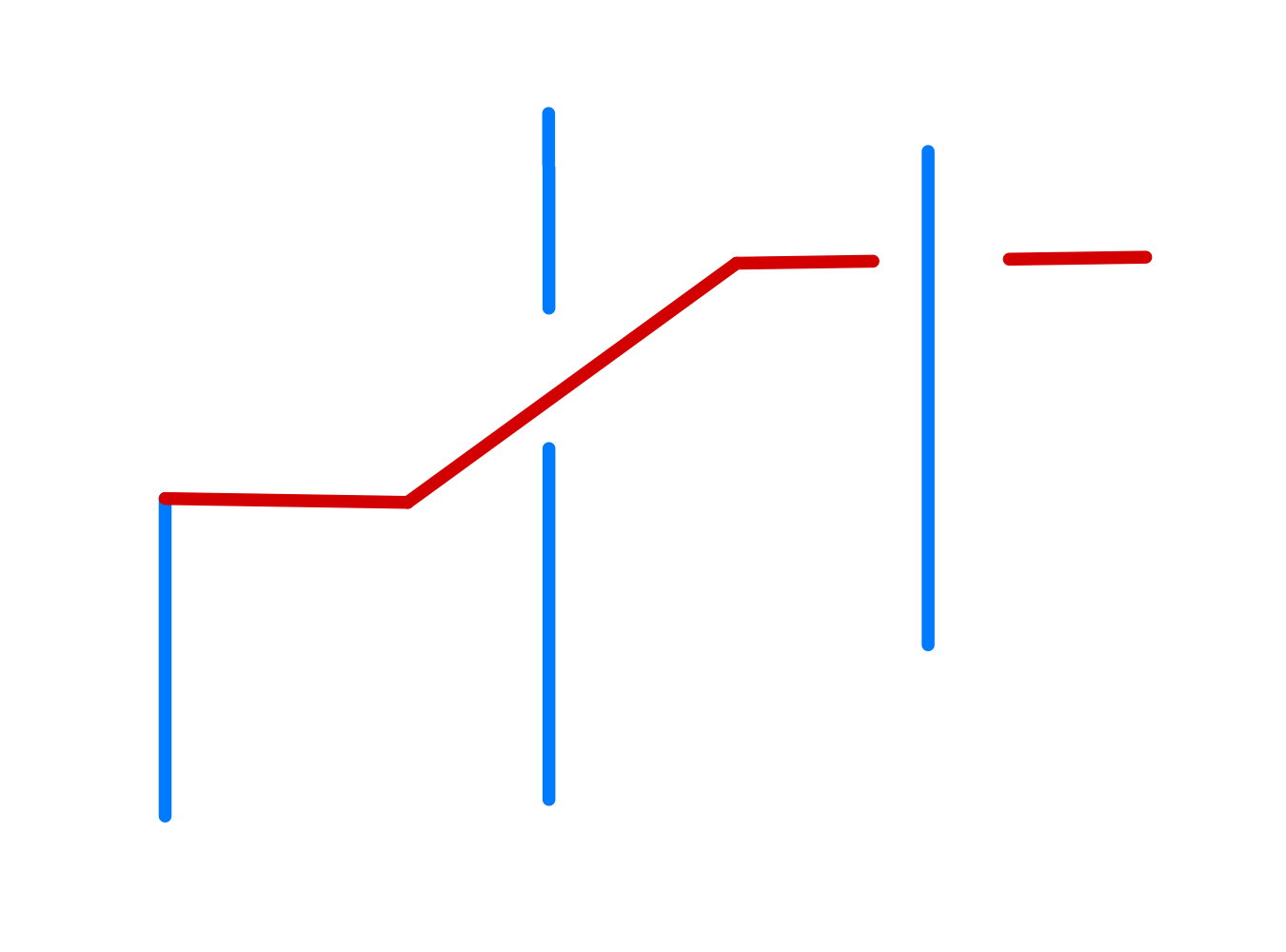}
    \caption{The $xyx$ sequence passes between the two $w$-sticks.}
    \label{xyx1}
\end{figure}
Suppose the remaining $z$-stick is on $w$-level $1$, then the knot projection should look like one of the unknot projections from \fullref{xyx2} below. 
\begin{figure}[ht]
    \centering
    \includegraphics[height=5cm]{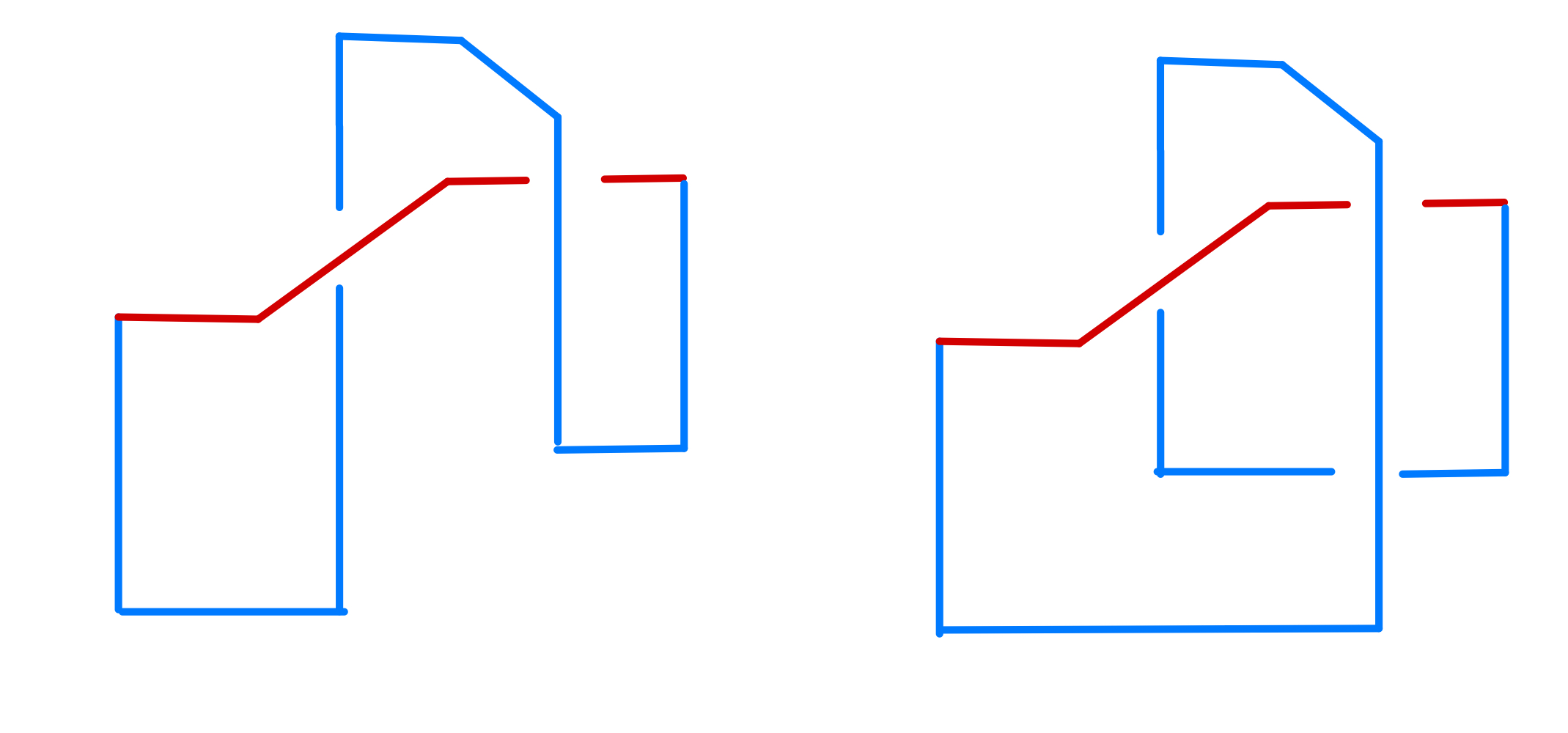}
    \caption{Two projections of the unknot}
    \label{xyx2}
\end{figure}
Otherwise, $w$-level $1$ only has an $x$-stick. Therefore, $w_{13}$ and $w_{14}$ are on the same $x$-level. However, now the two sticks are not in the same $x$-level as the other two $w$-sticks, otherwise we have an intersection of sticks. Therefore, we need at least two $z$-sticks to connect the rest of the knot, a contradiction. 
\end{proof}

A direct consequence of this lemma is the following. 

\begin{corollary}\label{stickbound}
For a non-trivial $11$-stick polygon with four $w$-sticks, $\mathcal P$, there is $|\mathcal P|_x\ge 3$, $|\mathcal P|_y\ge 2$ and $|\mathcal P|_z\ge 1$, up to permutation of stick types. In particular, a non-trivial 11-stick polygon must have either $(a)$ four $x$-sticks, two $y$-sticks, and one $z$-stick, $(b)$ three $x$-sticks, three $y$-sticks and one $z$-stick, or $(c)$ three $x$-sticks, two $y$-sticks and two $z$-sticks without loss of generality. We will refer to these cases as the $(4, 2, 1)$ case, the $(3, 3, 1)$ case, and the $(3, 2, 2)$ case, respectively.
\end{corollary}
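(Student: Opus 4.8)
The plan is to combine the lemma immediately preceding the corollary with the $w$-stick count from \fullref{4stick4level}, after which only a short enumeration of integer partitions remains. First I would note that a non-trivial $11$-stick polygon $\mathcal P$ is automatically irreducible: by Theorem 3 of \cite{bailey2015stick} every non-trivial sh-lattice knot requires at least eleven sticks, so no stick-reducing move can leave $\mathcal P$ non-trivial. Hence \fullref{4stick4level} applies, $\mathcal P$ has exactly four $w$-sticks, and therefore $|\mathcal P|_x + |\mathcal P|_y + |\mathcal P|_z = 11 - 4 = 7$.

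Next I would invoke the lemma directly above the corollary: among the three numbers $|\mathcal P|_x$, $|\mathcal P|_y$, $|\mathcal P|_z$, at least two are $\ge 2$ and the remaining one is $\ge 1$. So the task reduces to listing all ways of writing $7$ as an unordered sum of three positive integers, at least two of which are $\ge 2$. The partition $5+1+1$ is excluded because it has only one summand $\ge 2$; the remaining possibilities are exactly $4+2+1$, $3+3+1$, and $3+2+2$, which are precisely the $(4,2,1)$, $(3,3,1)$, and $(3,2,2)$ cases named in the statement.

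Finally, sorting each of these triples in non-increasing order, the largest entry is always $\ge 3$, the middle entry is always $\ge 2$, and the smallest entry is $\ge 1$; relabeling the stick types so that $x$ is the most frequent, $y$ the next, and $z$ the least gives $|\mathcal P|_x \ge 3$, $|\mathcal P|_y \ge 2$, $|\mathcal P|_z \ge 1$ up to permutation of stick types, as claimed. I do not expect any genuine obstacle here: all of the topological content sits in the preceding lemma and in \fullref{4stick4level}, and the only points requiring a moment's care are verifying that the non-triviality hypothesis legitimately licenses the irreducibility-based results and that the list of three partitions is exhaustive.
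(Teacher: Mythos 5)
Your proposal is correct and matches the paper's intent: the paper states this corollary as a direct consequence of the preceding lemma (whose proof already establishes $|\mathcal P|_x+|\mathcal P|_y+|\mathcal P|_z=7$ via \fullref{4stick4level}), and your partition enumeration of $7$ with at least two parts $\ge 2$ is exactly the implicit argument. Your observation that non-triviality forces irreducibility (via the eleven-stick lower bound from Theorem 3 of \cite{bailey2015stick}), so that \fullref{4stick4level} legitimately applies, is a careful point the paper leaves unstated.
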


\begin{definition} [Square of Replacement]
Consider the view of a sh-lattice knot after applying $T^{-1}$: that is, we send $x$-, $y$-, and $w$-sticks in sh-lattice back to where they were in the cubic lattice, and send the $z$-sticks in sh-lattice to line segments parallel to $\langle-1,1,0\rangle$ in the cubic lattice, while keeping the same labels as in sh-lattice. 
The \textit{square of replacement} is defined to be the square formed with a particular $z$-stick as the diagonal in this view, restricted to the $w$-level on which the $z$-stick sits.
When we say a stick is in the square of replacement, it means the stick intersects with the square at exactly one point. We exclude the endpoints of the $z$-stick from the square of replacement.
\end{definition}

\begin{proposition} \label{z reduction lemma}
Let $\mathcal P$ be any sh-lattice polygon. If there are three $w$-sticks and no $z$-sticks in the square of replacement of a given $z$-stick, and there are at most four sticks on the $w$-level on which the $z$-stick sits, then the $z$-stick can be replaced with $x$- and $y$-sticks, adding a net total of three sticks.
\end{proposition}

\begin{proof}
Consider the figure after applying $T^{-1}$ while keeping the $x$-, $y$-, $z$-, and $w$-sticks labeled as they were. Also, let the top-left corner of the $z$-stick be the origin and let $l$ be the side length of the square of replacement. We will go through the cases for one to three sticks in the square of replacement and show the number of sticks necessary to replace the $z$-stick with $x$- and $y$-sticks does not exceed three. We will also ignore any $x$- or $y$-sticks in the square of replacement, which will be justified at the end of the proof. 

First, assume that we have one $w$-stick in the square. This is either in the upper or lower triangular portion so we can replace the $z$-stick with an $x$- and $y$-stick in the other half of the square, adding one stick. 

Now, assume there are two $w$-sticks in the square (see \fullref{2 w stick case}). If they are both in the upper or lower triangle then we can replace them like above. If not, then there is one on either side. They must differ in the $x$- or $y$-coordinate by at least one. Scale the entire knot by a factor of $2$ so we are guaranteed a gap of at least two between the $w$-sticks. Then, without loss of generality, suppose that they differ in the $y$-coordinate. Let $a$ be the $y$-coordinate of the lowest $w$-stick. Then as depicted in \fullref{2 w stick case}, we can replace the $z$-stick by the following sequence of sticks: $y_{0,a}x_{0,l}y_{a,l}$. 

\begin{figure}[ht]
    \centering
    \includegraphics[width = 0.45\textwidth,trim={0 8cm 0 8cm},clip]{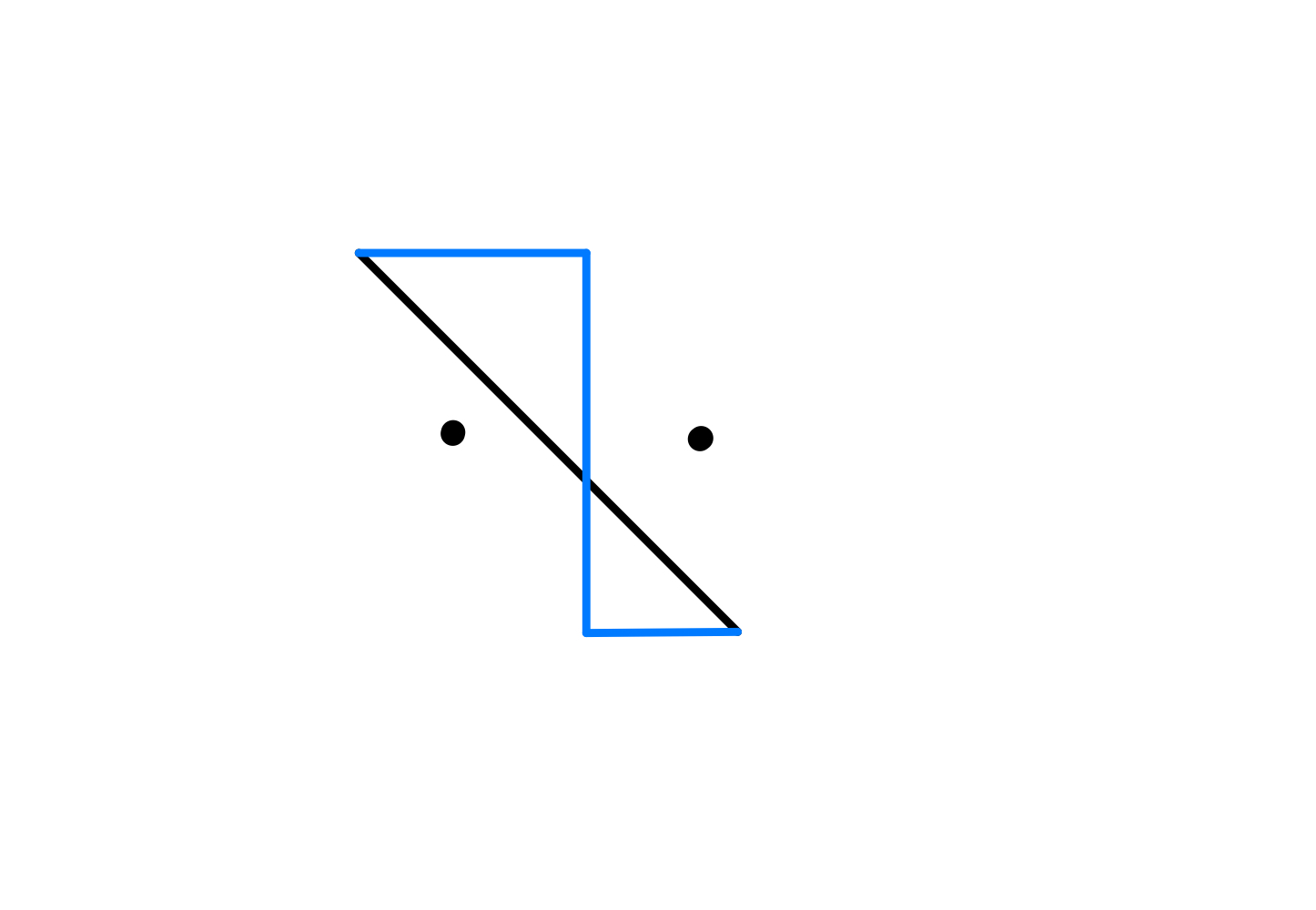}
    \caption{Two $w$-stick case}
    \label{2 w stick case}
\end{figure}

Assume that there are three $w$-sticks in the square. Scale the knot by a factor of $2$. If all $w$-sticks are in the upper or lower triangle then we can replace them like in the one-stick case. Note that between the three of them, there must be at least two unique $x$- or $y$-coordinates. Assume without loss of generality that there are two unique $y$-coordinates then let $s$ be the greatest of the $y$-coordinates. Look at the portions of $z$ above $s$ and below $s$ separately, forming two new squares of replacement for each of these. One of these squares has at most two sticks and the other has at most one. For simplicity, assume that there are the maximum amount of sticks in each square but note that there may be other ways to replace the $z$-stick with fewer sticks if there are fewer than the maximum. 

Without loss of generality, suppose the bottom portion has two $w$-sticks. If each of these portions has both $w$-sticks on one side of the $z$-stick, then we can replace the $z$-stick like in the two $w$-stick case. If not, then the portion with two $w$-sticks has a $w$-stick on either side of the triangle. Once again, they differ in at least one of the $x$- or $y$-coordinates. Thus, we have four subcases for which side of the $z$-stick the $w$-sticks are on and which of the $w$-coordinates we know is different in the lower portion, which we can see in \fullref{3 w stick cases}. However, note that the first and fourth cases are the same up to rotation, and likewise for the second and third. So, we only have two subcases to cover. Also, note that if the $w$-sticks in the lower portion differ in the $x$- and $y$-coordinates, then it satisfies both subcases so the method described in either subcase will work.

For the first subcase, the $w$-stick in the upper square is above the $z$-stick and the lower square has sticks differing in the $x$-coordinate as in the picture. Let $a$ be one less than the $y$-coordinate of the upper $w$-stick and let $b$ be one less than the $x$-coordinate of the rightmost $w$-stick. Then, we can replace $z$ with the following sequence of sticks: $y_{0a}x_{0b}y_{al}x_{bl}$. 

For the second subcase, the $w$-stick in the upper square is below the $z$-stick and the lower square has sticks differing in the $x$-coordinate. Let $a$ be the $x$-coordinate of the rightmost $w$-stick minus $1$. Then, we can replace $z$ with the following sequence of sticks: $x_{0a}y_{0l}x_{bl}$. 

\begin{figure}[ht]
    \centering
    \includegraphics[width = \textwidth,trim={0 8cm 0 8cm},clip]{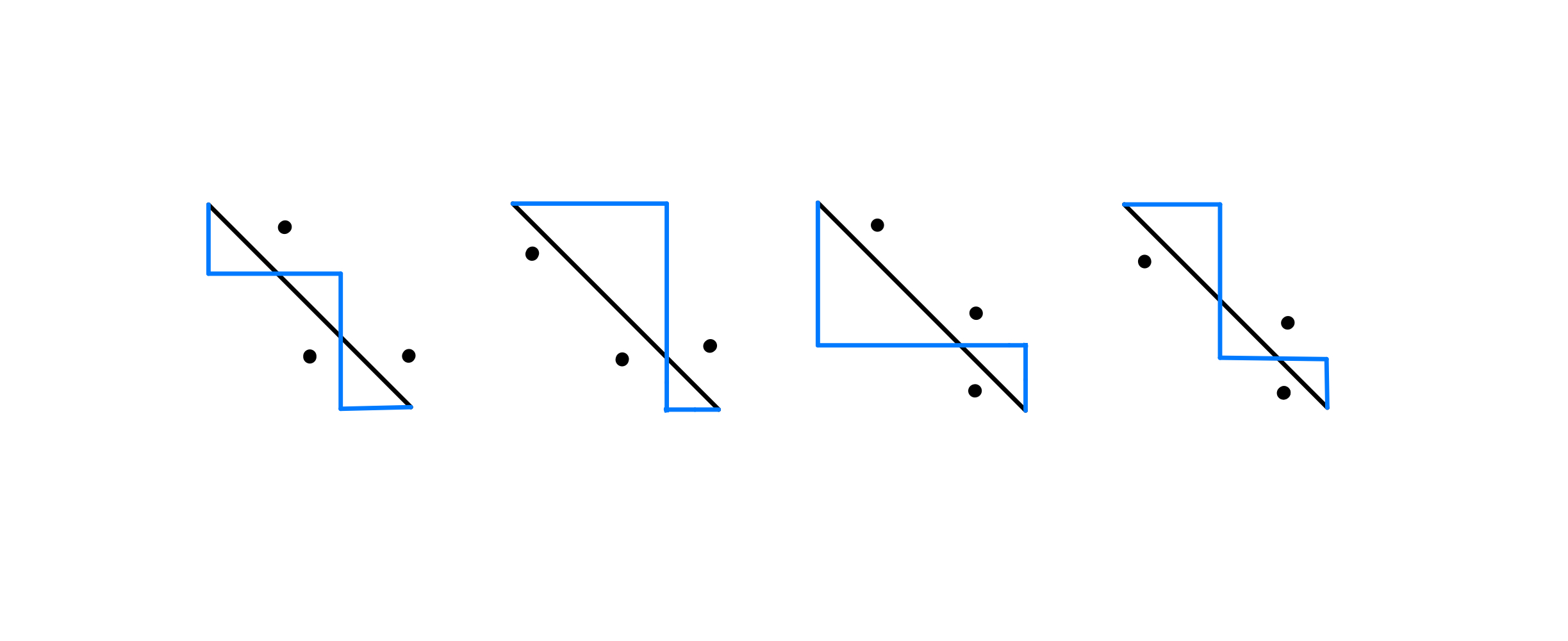}
    \caption{Three $w$-stick cases}
    \label{3 w stick cases}
\end{figure}

We need to address the possibility of $x$- or $y$-sticks in the square on the same $w$-level as the $z$-stick. If we want to replace a $z$-stick with an $x$- and $y$-stick and are obstructed by a stick on the border of the square, we can simply combine them. We will now consider the case where an $x$- or $y$-stick is in the interior of the square. Without loss of generality, suppose that it is an $x$-stick. Note that one endpoint of the $x$-stick must be connected to the $z$-stick. There are two ways that this can occur, namely through a $zxyx$ sequence of sticks or a $zyx$ sequence of sticks. 

In the first case, observe that we know the entirety of the $w$-level and in particular, the endpoint of the $x$-stick intersecting the interior of the square is connected to a $w$-stick. We would like to apply an $R$-move on the $yx$ sequence of sticks. If possible, then we have two outcomes. If the $x$-stick was connected to a $w$-stick outside the square, we no longer have any sticks intersecting the boundary of the square. If not, then the $w$-stick is one of the sticks we consider in the above procedure. In this case, note that having a singular $y$-stick attached to one of the $w$-sticks does not interfere with the procedure, so we can still replace the $z$-stick as desired. Now, suppose that we cannot apply an $R$-move to the $yx$ sequence of sticks. In this case, the $R$-move must be obstructed by a $w$-stick, which must lie within the square of replacement. In this case, we can simply replace that $w$-stick in our procedure with the intersection of the $x$- and $y$-sticks, thus allowing us to replace the $z$-stick as desired.

In the second case, if the $x$-stick is connected to a $w$-stick, then the $x$-stick plays no role and we can replace the $z$-stick like normal. Otherwise, we have a $zyxy$ sequence of sticks. The same logic applies as when we had the $zxyx$ sequence of sticks so once again, we can replace the $z$-stick as desired.

As a result of this procedure, there will be at most a net total of three sticks added to our knot (i.e. one stick removed and four added).
\end{proof}

\begin{remark}
    For any $11$-stick polygon, there are at most four sticks in any given $w$-level so \fullref{z reduction lemma} is always applicable.
\end{remark}

\begin{lemma} \label{4wstick z reduction}
Let $\mathcal{P}$ be a $11$-stick polygon with four $w$-sticks. Then, if a $z$-stick has four $w$-sticks in its square of replacement, the $z$-stick can be replaced with $x$- and $y$-sticks with an addition of at most three sticks.
\end{lemma}

\begin{proof}
If none of the $w$-sticks are on the border of the square, then from each end of our $z$-stick, we need two sticks to reach another $w$-stick. Then, each other $w$-stick requires at least one stick, meaning that we use seven sticks in total, which takes us to twelve total sticks in total. If there is one $w$-stick on the border of the square, we can reach it from the end of the $z$-stick with one stick while the $w$-stick connected to the other endpoint of the $z$-stick will require two. If we can connect all the rest with one stick, we get eleven sticks in total. No other configurations are possible so we have this configuration of sticks. Note that we can use an $R$-move to move the stick on the border to the endpoint of the $z$-stick. Now by \fullref{z reduction lemma} we can reduce the $z$-stick with the addition of at most three sticks. Lastly, suppose there are two $w$-sticks on the border. If neither of them is connected to the endpoints of the $z$-stick with one stick then we run into the same issue as if there were two. If one of them is, then we have the same situation as the case with one on the border. If both are connected to the $z$-stick endpoints with one stick, note that we can connect all our required points with one stick each to get ten sticks. Thus, we have one stick unaccounted for. However, this stick can only add one more endpoint to our understanding of the knot so it can obstruct $R$-moves at most at one of the endpoints of the $z$-stick; if we had a stick obstructing $R$-moves at both ends, we need more than one stick beyond what is already required to connect everything else. So, on at least one of the endpoints, we can perform an $R$-move to move a $w$-stick onto the endpoint of the $z$-stick and we have the three $w$-stick case.
\end{proof}

\begin{remark}
Note that by the result in \cite{huang2017lattice}, we know that any knot in the cubic lattice with less than sixteen sticks is a trefoil knot or a figure-eight knot. Therefore, we can add up to four sticks in our replacement of $z$-sticks starting from a non-trivial $11$-stick polygon, and the resultant knot is still a $3_1$ or $4_1$ knot.
\end{remark}

\begin{corollary}\label{1least}
The only non-trivial $11$-stick polygons $\mathcal P$ with four $w$-sticks satisfying 
$$
\min\{|\mathcal P|_x,|\mathcal P|_y,|\mathcal P|_z\} = 1$$
are $3_1$ and $4_1$. 
In other words, any knots in the $(4, 2, 1)$ or the $(3, 3, 1)$ cases are $3_1$ or $4_1$.
\end{corollary}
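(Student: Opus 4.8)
The plan is to use the single $z$-stick as a handle: replace it by $x$- and $y$-sticks at the cost of only a few extra sticks, pass to the cubic lattice via $T^{-1}$, and then invoke the Huang--Yang classification.

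First I would reduce to the case of a single $z$-stick. By \fullref{stickbound}, a non-trivial $11$-stick polygon $\mathcal P$ lies, up to permutation of the stick types, in the $(4,2,1)$, $(3,3,1)$, or $(3,2,2)$ case; the hypothesis $\min\{|\mathcal P|_x,|\mathcal P|_y,|\mathcal P|_z\}=1$ rules out $(3,2,2)$, so after relabelling the directions we may assume $|\mathcal P|_z=1$. Let $z_0$ be this unique $z$-stick. Note the argument below will not need to distinguish the $(4,2,1)$ and $(3,3,1)$ cases.

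Next I would carry out the $z$-reduction. By \fullref{4stick4level}, $\mathcal P$ has exactly four $w$-sticks and four $w$-levels. Pass to the view after applying $T^{-1}$ and consider the square of replacement $S$ of $z_0$. Since $z_0$ is the only $z$-stick of $\mathcal P$, the square $S$ contains no $z$-stick other than $z_0$, so the only sticks of $\mathcal P$ that can meet $S$ are $w$-sticks (at most four of them) and possibly some $x$- or $y$-sticks on the $w$-level of $z_0$. If at most three $w$-sticks meet $S$, apply \fullref{z reduction lemma}; if all four do, apply \fullref{4wstick z reduction}. Either way we obtain a polygon $\mathcal P'$ of the same knot type as $\mathcal P$, containing no $z$-sticks, with at most $|\mathcal P|+3=14$ sticks, the $x$/$y$-sticks in $S$ being absorbed by the merging-and-$R$-move bookkeeping already contained in those statements.

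Finally I would transfer to the cubic lattice and conclude. As $\mathcal P'$ has no $z$-sticks, $T^{-1}(\mathcal P')$ is an honest polygon in $\mathbb L^3$ representing the same knot type $[K]$ as $\mathcal P$, with at most $14\le 15$ sticks; and since $\mathcal P$ is non-trivial, so is $T^{-1}(\mathcal P')$. By the Huang--Yang theorem (\cite{huang2017lattice}), $[K]$ is $3_1$ or $4_1$. Because the $(4,2,1)$ and $(3,3,1)$ cases are precisely the instances of $\min\{|\mathcal P|_x,|\mathcal P|_y,|\mathcal P|_z\}=1$ recorded in \fullref{stickbound}, this proves the corollary; that $3_1$ and $4_1$ do occur with eleven sticks is known for the trefoil and is \fullref{4-1knot} for the figure-eight. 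The only point needing care is checking that the hypotheses of \fullref{z reduction lemma} and \fullref{4wstick z reduction} genuinely hold — that the sole non-$w$ obstructions inside $S$ are $x$/$y$-sticks on the $z_0$-level, which is immediate from $|\mathcal P|_z=1$ — and that the stick count never exceeds $15$, which follows since the net increase is at most three. Given the machinery already established, there is no substantial new obstacle here.
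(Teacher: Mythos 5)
Your proposal is correct and follows essentially the same route as the paper: reduce the single $z$-stick via \fullref{z reduction lemma}/\fullref{4wstick z reduction} (at most three extra sticks, so at most fourteen total), pass to the cubic lattice with $T^{-1}$, and invoke the Huang--Yang classification of knots with $s_L[K]\le 15$. The extra care you take in checking the lemmas' hypotheses and splitting the $\le 3$ versus $4$ $w$-stick cases is only a more explicit version of what the paper's proof does implicitly.
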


\begin{proof}
Note that we have only four $w$-sticks in an $11$-stick polygon so we don't have to account for cases with more than four $w$-sticks in the square.

Without loss of generality, we can say that a polygon in question has one $z$-stick. Then, by \fullref{4wstick z reduction}, this $z$-stick can be reduced into $x$- and $y$-sticks with the addition of at most three $x$- or $y$-sticks. Hence, the knot has at most fourteen sticks and is in the cubic lattice (by applying $T^{-1}$), so it is either the trefoil knot or the figure-eight knot.
\end{proof}

Now that we know any $11$-stick polygon with four $w$-sticks and one $z$-stick is $3_1$ or $4_1$, we seek to prove the same statement for $11$-stick polygons with four $w$-sticks and two $z$-sticks. First, we identify some conditions these pairs of $z$-sticks must meet. 

\begin{lemma}\label{diff w levels}
For any irreducible knot of the case $(3,2,2)$ that is not of type $3_1$ or $4_1$, we can rotate the knot so the $z$-sticks are on different $w$-levels.
\end{lemma}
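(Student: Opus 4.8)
The plan is to reduce the statement to a finite combinatorial case‑check on how the seven non‑$w$‑sticks can sit on the four $w$‑levels, and then to dispatch each case either by producing an explicit reduction or by recognizing the resulting diagram.

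First I would pin down the skeleton of $\mathcal P$. By \fullref{4stick4level} and \fullref{specificsticks}, $\mathcal P$ has exactly the four $w$‑sticks $w_{13},w_{14},w_{23},w_{24}$ and exactly four $w$‑levels, and since it is properly leveled each level carries a single connected polygonal arc. Hence the cyclic sequence of levels traversed by $\mathcal P$ is an Eulerian circuit of the multigraph on $\{1,2,3,4\}$ with edge set $\{13,14,23,24\}$; that multigraph is the $4$‑cycle $1\!-\!3\!-\!2\!-\!4\!-\!1$, so up to starting point and orientation the circuit is $1,3,2,4$. Therefore $\mathcal P$ splits around the circle into four arcs $A_1,A_3,A_2,A_4$ (with $A_i$ lying on the $i$th $w$‑level), separated by $w_{13},w_{23},w_{24},w_{14}$, and the seven non‑$w$‑sticks — which by \fullref{stickbound} consist, up to permuting directions, of three sticks in one direction $\alpha$ and two each in the other two directions $\beta,\gamma$ — are distributed among $A_1,A_2,A_3,A_4$ with every $A_i$ receiving at least one stick. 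A rotation of the sh‑lattice about the $w$‑axis through $60^{\circ}$ fixes every $w$‑level and cyclically permutes the three stick directions, so by choosing it appropriately we may take ``the $z$‑sticks'' of $\mathcal P$ to be the two $\beta$‑sticks or the two $\gamma$‑sticks. Thus it suffices to prove the contrapositive: if the two $\beta$‑sticks lie on a single level $p$ and the two $\gamma$‑sticks on a single level $q$, then $\mathcal P$ is reducible (contradicting irreducibility) or of type $3_1$ or $4_1$.

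Now the casework. Each $A_i$ already uses one stick and only three sticks remain to be placed freely, so the patterns are very restricted. If $p=q$, the arc $A_p$ carries all four $\beta,\gamma$‑sticks — forcing the word $\beta\gamma\beta\gamma$, since consecutive sticks of an embedded arc point in different directions — and each of the other three arcs is a single $\alpha$‑stick. If $p\neq q$, then $A_p$ carries the two $\beta$‑sticks, $A_q$ the two $\gamma$‑sticks, the two arcs other than $A_p,A_q$ carry only $\alpha$‑sticks, and a one‑line count shows at least one of those two arcs is a single $\alpha$‑stick; enumerating where the remaining one or two $\alpha$‑sticks may go yields a short explicit list of arc‑words. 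For each combinatorial type I would pass to the projection orthogonal to $w$: the single‑stick arcs become straight segments and the $w$‑sticks become points, and the constraints that $\mathcal P$ be embedded (no two $w$‑sticks overlapping in projection over a shared height interval, no arc meeting a $w$‑stick on its own level), properly leveled, and irreducible (no admissible $R$‑move in the sense of \fullref{R-move}, and none of the recurring ``slide a stick onto a neighbor'' or boundary‑level reductions — e.g. the short‑$w$‑stick obstruction of Lemma~1.3 of \cite{mann2012stick}) cut the picture down to at most a handful of embeddings, which one checks by hand to be exactly the standard $11$‑stick conformations of $3_1$ and $4_1$, or else reducible.

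I expect the main obstacle to be the two tightest subcases: the $p=q$ case, where one analyzes the single zigzag arc $\beta\gamma\beta\gamma$ capped by three straight $\alpha$‑sticks and the four $w$‑sticks joining them, and the $p\neq q$ subcase in which a leftover $\alpha$‑stick sits on $A_p$ or $A_q$. In both, the real work is bookkeeping the planar positions of the four $w$‑sticks relative to the arcs and using irreducibility in a disciplined way to kill the many ``almost reducible'' near‑misses; by contrast, the reduction to the combinatorial claim, the rotation argument, and the enumeration of arc‑words are routine.
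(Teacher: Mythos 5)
Your reduction is the right one and, in its first half, essentially coincides with the paper's proof: pass to the contrapositive, use a rotation about the $w$-axis (which fixes every $w$-level and cyclically permutes the three planar directions) to arrange that the relevant pair of sticks becomes the $z$-pair, and split according to whether the two two-stick directions sit on the same level or not. Your $p\neq q$ branch is fine --- in fact it is vacuous, by a count you almost make: each of $A_p$ and $A_q$ must contain an $\alpha$-stick separating its two parallel sticks, and the remaining two levels can only carry $\alpha$-sticks, so at least four $\alpha$-sticks are needed while only three exist. This is the paper's first case, phrased there via boundary levels, so the ``tight subcase'' you worry about with a leftover $\alpha$-stick on $A_p$ or $A_q$ never occurs.

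The genuine gap is the $p=q$ branch, which is where the content of the lemma lives. You correctly reduce it to the configuration ``one level carrying the word $\beta\gamma\beta\gamma$, the other three levels carrying one $\alpha$-stick each,'' but then you assert that projecting along $w$ ``cuts the picture down to at most a handful of embeddings, which one checks by hand.'' Nothing in the proposal makes that check finite or decidable: stick lengths and relative positions are unbounded, the zigzag can a priori cross the three parallel $\alpha$-sticks in up to a dozen points, and you give neither a bound on the crossing number, nor a list of combinatorial types, nor a method for identifying the resulting knots. You also never address whether the $\beta\gamma\beta\gamma$ level may be a boundary level, a possibility the paper excludes at the outset by its combine-or-extra-stick observation and which matters for any quantitative replacement argument. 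The paper closes this case without any diagram enumeration: since each of the other three levels carries a single stick, it rotates those three sticks into the $z$-direction, applies $T^{-1}$, and invokes \fullref{z reduction lemma} (cost one added stick on each boundary level, cost two for the interior two-stick case) to obtain a cubic-lattice representative with at most fifteen sticks, so the classification of \cite{huang2017lattice} forces $3_1$ or $4_1$. Without that argument, or some substitute that makes your ``check by hand'' an explicit finite verification, the decisive case of the lemma remains unproved.
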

\begin{proof}
First, note that if the $y$-sticks are on different levels, we can rotate the knot so that the $y$-sticks are now $z$-sticks. If not, then note first that the $z$-sticks cannot both be on the boundary levels because we would be able to combine them into one stick, or there would have to be a third stick in between them, both of which would be a contradiction. So without loss of generality, suppose they are on level $2$. First, suppose that both $y$-sticks are on $w$-level $3$. Then, we need an $x$-stick on each boundary, leaving one stick for the inner levels. However, both levels need an $x$-stick; otherwise, there would only be one type of stick on a level and we could combine them to reduce the stick number. Now, suppose that the $z$- and $y$-sticks are on level $2$. Then, we must have one $x$-stick on each other level. Rotate the knot so that the $x$-sticks are now $z$-sticks and apply $T^{-1}$. On both boundary levels, we can replace the $z$-stick with $x$- and $y$-sticks, increasing the stick number by $1$ each. On $w$-level $3$, we have only one stick so the $w$-sticks must be on the endpoints of the $x$-sticks. Thus, there are only two $w$-sticks unaccounted for and we can replace them like in the two $w$-stick case from \fullref{z reduction lemma}, adding at most two sticks. So, we have a knot in the cubic lattice with at most fifteen sticks, indicating that it is $3_1$ or $4_1$. 
\end{proof}

\begin{lemma} \label{not 2}
If there are two $z$-sticks in an irreducible $11$-stick knot with four $w$-sticks, then they cannot both be in the three $w$-stick case from \fullref{z reduction lemma}. 
\end{lemma}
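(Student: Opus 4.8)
The plan is to argue by contradiction. Suppose $\mathcal P$ is a non-trivial irreducible $11$-stick polygon in the $(3,2,2)$ case (the only case, up to permutation of stick types, in which two $z$-sticks occur), and suppose both $z$-sticks, say $z_1$ and $z_2$, lie in the three-stick case of \fullref{z reduction lemma}; that is, each of their squares of replacement contains exactly three $w$-sticks and no other $z$-stick. By \fullref{4stick4level} and \fullref{specificsticks}, $\mathcal P$ has exactly four $w$-levels and exactly the $w$-sticks $w_{13}, w_{14}, w_{23}, w_{24}$. On $w$-level $1$ only $w_{13}$ and $w_{14}$ are present, and on $w$-level $4$ only $w_{14}$ and $w_{24}$ are present; hence a $z$-stick on a boundary level has at most two $w$-sticks in its square of replacement and cannot be in the three-stick case. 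Since by \fullref{diff w levels} the two $z$-sticks may be taken on distinct $w$-levels, I conclude that, after relabeling, $z_1$ lies on $w$-level $2$ and $z_2$ on $w$-level $3$.

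Next I would record the global shape of $\mathcal P$. Writing $A, B, C, D$ for the common $(x,y)$-projections of $w_{23}, w_{24}, w_{13}, w_{14}$ respectively (these four points are pairwise distinct: two distinct $w$-sticks with overlapping level-ranges cannot lie on a common vertical line without intersecting, and two consecutive $w$-sticks meeting at an endpoint would merge), the polygon is the union of the four $w$-sticks with four horizontal arcs, namely a level-$1$ arc from $C$ to $D$, a level-$2$ arc from $B$ to $A$ avoiding $C$ and $D$, a level-$3$ arc from $A$ to $C$ avoiding $B$ and $D$, and a level-$4$ arc from $D$ to $B$. These arcs carry all seven non-$w$ sticks (three $x$-, two $y$-, two $z$-sticks), at least one per arc, so their lengths form $(4,1,1,1)$, $(3,2,1,1)$, or $(2,2,2,1)$ in some order; moreover $z_1$ lies on the level-$2$ arc and $z_2$ on the level-$3$ arc.

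The heart of the argument is the analysis of the two squares of replacement $S_1$ (of $z_1$, in the level-$2$ plane) and $S_2$ (of $z_2$, in the level-$3$ plane) in the $T^{-1}$-view, where $z_1$ and $z_2$ are segments of direction $\langle -1,1,0 \rangle$ and each $S_i$ is the axis-parallel square having $z_i$ as a diagonal. Each $S_i$ must contain exactly three of the four points $A, B, C, D$ — which occupy the same positions on levels $2$ and $3$ — and none of these three may lie on the diagonal $z_i$ itself, lest the knot self-intersect. The governing observation is that the two endpoints of $z_1$ differ by a vector parallel to $\langle -1,1 \rangle$, and likewise for $z_2$; so as soon as two of $A, B, C, D$ are forced to be endpoints of, or otherwise colinear with, a $z$-stick, they lie on the line through that $z$-stick's diagonal and hence cannot both lie in a square having that diagonal, contradicting the requirement that three of the four points lie in each of $S_1$ and $S_2$. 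I would push this through each arc-length distribution and each position of $z_1$ and $z_2$ within their arcs; in every configuration one obtains either a self-intersection, or a point of $\{A, B, C, D\}$ lying outside one of the squares, or an empty rectangle on which an $R$-move or a merge of two parallel sticks reduces $\mathcal P$ below $11$ sticks — contradicting, respectively, that $\mathcal P$ is a knot, that both squares are three-stick, or that $\mathcal P$ is irreducible.

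I expect the bookkeeping in this last step to be the main obstacle: one must handle the cases in which an arc has length greater than $1$, so a $z$-stick need not have an endpoint exactly at one of $A, B, C, D$, and check that the three points lying in $S_1$ cannot be arranged consistently with the three lying in $S_2$ given that $A, B, C, D$ are common to levels $2$ and $3$. The structurally decisive cases are those in which each of $S_1$ and $S_2$ captures its three points along short arcs, where the colinearity observation applies directly; the remaining cases should collapse to an available $R$-move, much as in the proof of \fullref{5w_stick}.
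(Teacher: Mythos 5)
There is a genuine gap, and it sits exactly where you predict it will: the decisive step of your plan — ``push this through each arc-length distribution and each position of $z_1$ and $z_2$ within their arcs'' — is never carried out, and it is not mere bookkeeping. The ``governing observation'' you propose to drive most of those cases is false in the configuration that matters most. If a $w$-stick is attached to each endpoint of a $z$-stick, then the two projections in question are the two corners of the square of replacement lying on the diagonal; under the paper's convention (a stick is in the square if it meets it in exactly one point, and sticks ``on the border'' are explicitly counted, cf.\ \fullref{4wstick z reduction}), both of these points \emph{do} lie in the square, with no self-intersection. So collinearity with the diagonal excludes nothing here, and this is precisely the hard configuration: the paper's own proof of \fullref{not 2} isolates it as the case in which only two $w$-sticks are unattached to $z$-sticks, and kills it not by geometry but by a stick count (connecting the free endpoints of the $w$-sticks at the corners back to $w_{23}$, and $w_{14}$ to $w_{13}$ and $w_{24}$, forces at least twelve sticks). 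Your plan contains no substitute for that count, so even after all the square-membership case-checking you envision, this configuration would survive.

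Two further points. First, your appeal to \fullref{diff w levels} to place the two $z$-sticks on levels $2$ and $3$ is shaky: that lemma produces a \emph{rotation}, which changes which sticks are labelled $z$-sticks, so it cannot be applied verbatim to the specific pair of $z$-sticks assumed to be in the three-stick case; your boundary-level observation (only two $w$-sticks meet levels $1$ and $4$) is sound, but it does not by itself exclude both $z$-sticks lying on the same interior level. Second, note that the paper's argument is structurally different from yours: it splits on how many $w$-sticks are unattached to $z$-sticks and runs a global budget of the eleven sticks, concluding in the first case that an $R$-move puts at least one $z$-stick into the two-stick case and in the second that twelve sticks would be needed. Any completed version of your square-membership analysis would end up re-deriving such counts anyway, so I would recommend adopting the counting frame directly rather than enumerating arc-length distributions.
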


\begin{proof}
Suppose we had two $z$-sticks in the three $w$-stick case. Then, there are two possibilities: either we have at least three $w$-sticks unconnected to $z$-sticks, or two $w$-sticks are unconnected to $z$-sticks. 

In the first case, we need at least seven sticks to connect at least seven different points, two of which are provided by the $z$-sticks. So, we can only have one stick connecting every pair of points. Thus, all sticks are accounted for and we can move a $w$-stick to the endpoint of a $z$-stick and at least one of the sticks is now in the two $w$-stick case. 

In the second case, note that each of the $z$-sticks must have a $w$-stick on each endpoint (because this stick along with the two others forms the three $w$-stick case for the other $z$-stick). Also, note that one $z$-stick has to be on $w$-level $2$ and the other on $w$-level $3$. Looking at the $z$-stick on $w$-level $2$, there are two possibilities for the $w$-stick on its endpoint: either the $w$-stick is the $w_{23}$ stick or the $w_{24}$ stick. If it is the $w_{24}$ the other $w$-stick sits right on top of it. However, this means that the $w$-sticks on the endpoints can't serve as the third obstruction so we only have the two $w$-stick case. If it is the $w_{24}$ stick then the $w_{23}$ stick must be one of the obstructing $w$-sticks. So, we need to connect the other endpoints of the $w$-stick to the $w_{23}$ stick. Note that these endpoints are not within the overlapping region of both $z$-sticks' squares. So, it will take at least two sticks to connect to $w_{23}$ from each $z$-stick. Also, it will take at least one stick each to connect the $w_{14}$ stick to the $w_{13}$ and $w_{24}$ sticks. However, this takes us up to twelve sticks in the knot which is a contradiction. 
\end{proof}

At this point, we define the case on which we will focus for the next few lemmas. 

\begin{definition}[$p$-case] \label{p-case}
For simplicity, we will call the first case detailed in \fullref{3 w stick cases}, which is the only case that requires a net gain of three sticks, the problematic case, or \textit{$p$-case} for short.
\end{definition}

\begin{remark}
In the next few lemmas, when we say a stick is ``attached to an $x$-, $y$-, or $z$-stick'', we are only looking at the knot projection onto the $xy$-plane and allow the possibility that there is a $w$-stick between them. 
\end{remark}

\begin{lemma}\label{a}
Suppose we have an irreducible $11$-stick knot in the sh-lattice with four $w$-sticks. If we have
 a $z$-stick in the $p$-case and it is attached to another $z$-stick, then the knot must be $3_1$ or $4_1$.
\end{lemma}

\begin{proof}
If the $z$-stick not in the $p$-case is in the one-stick case, then we know from \fullref{z reduction lemma} that we can replace it by adding one stick and we can replace the stick in the $p$-case with a net gain of three sticks. This gives us a knot with fifteen sticks so it must be a trefoil knot or a figure-eight knot. We will now show that we cannot have a stick in the $p$-case and another in the two $w$-stick case. 

First, assume that the $z$-sticks are connected and point in different directions from the $w$-stick connecting them so they do not lie on top of each other. Then, we only have three more $w$-sticks so we cannot have one with two obstructions and another with three. 
Now, assume that the $z$-sticks are connected and point in the same direction from the $w$-stick connecting them so they lie on top of each other. Then, we can perform an $R$-move between the $w$-stick and the overlapping portion of the $z$-stick then combine the overlapping $z$-sticks so that the $z$-sticks are in the earlier case. 
\end{proof}

\begin{lemma} \label{b}
 Suppose we have an irreducible $11$-stick knot in the sh-lattice with four $w$-sticks. If we have a $z$-stick in the $p$-case and the bottom right corner is attached to a $y$-stick with its other endpoint higher than the shared $y$-coordinate, then the knot is either $3_1$ or $4_1$. 
\end{lemma}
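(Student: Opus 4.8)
By \fullref{stickbound} and \fullref{1least} I may assume the knot is irreducible, non-trivial, and in the $(3,2,2)$ case; by \fullref{specificsticks} its four $w$-sticks are exactly $w_{13},w_{23},w_{24},w_{14}$. Write $z_1$ for the $z$-stick in the $p$-case and $z_2$ for the other $z$-stick. Since $z_1$ is in the $p$-case, three of the four $w$-sticks lie in the square of replacement of $z_1$, so at most one $w$-stick lies outside it. By \fullref{not 2}, $z_2$ is \emph{not} in the three-stick case, so by \fullref{z reduction lemma} the stick $z_2$ can be converted into $x$- and $y$-sticks with a net addition of at most two sticks. If at any stage the configuration exhibits an empty rectangle giving an $R$-move, that contradicts irreducibility, so such branches are vacuous; and if the strand leaving $z_1$ through the attached $y$-stick reaches $z_2$, the situation reduces to one already settled (cf.\ \fullref{a}).

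The plan is to show that, under the stated hypothesis, $z_1$ can itself be converted into $x$- and $y$-sticks with a net addition of at most \emph{two} sticks — rather than the three the $p$-case generically costs — so that the total addition over both $z$-sticks is at most four. Granting this, applying $T^{-1}$ yields a polygon of the same knot type in the cubic lattice with at most $11+4=15$ sticks, and by the theorem of Huang \& Yang that polygon, being non-trivial, represents $3_1$ or $4_1$, which is the claim.

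To obtain the improved replacement of $z_1$, let $\sigma$ be the $y$-stick attached to the bottom-right corner $c$ of $z_1$'s square of replacement, leaving $c$ in the $+y$-direction. Recall that in the $p$-case the two $w$-sticks in the lower half of $z_1$'s square sit on opposite sides of the diagonal $z_1$ and differ in $x$-coordinate, and it is exactly the need to route around both that forces the four-step staircase $y_{0a}x_{0b}y_{al}x_{bl}$. I would trace the strand leaving $c$ along $\sigma$ and split on what its far endpoint meets — an $x$-stick, a $w$-stick, or (as $y$-sticks are maximal) possibly $z_2$ — using in each case the position of $\sigma$ together with the forced $w$-levels to show that one of the two lower $w$-sticks is reachable from a corner of $z_1$'s square \emph{through} $\sigma$. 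This drops $z_1$ into the two-stick replacement of \fullref{z reduction lemma} (net $+2$) after at most one $R$-move, since the direction hypothesis on $\sigma$ forces the needed staircase for $z_1$ to be built on the side of the diagonal away from $\sigma$, so that merging $\sigma$ with the first staircase step creates no new crossing.

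The main obstacle is precisely this last step: checking, for each possibility for the stick met by the far end of $\sigma$, that the cheaper two-stick replacement of $z_1$ introduces no intersection with the remaining sticks. This is a projection-geometry bookkeeping argument — one tracks the planar footprints of $w_{13},w_{23},w_{24},w_{14}$, the orientation of $z_1$'s square, and the lone $w$-stick (if any) outside that square — and the real content of the lemma is that ``$\sigma$ points up'' is exactly the hypothesis that makes this bookkeeping come out favorably. Everything after the improved replacement is routine: count sticks, apply $T^{-1}$, and quote Huang \& Yang.
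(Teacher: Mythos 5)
Your proposal does not prove the lemma: the step that carries all the weight --- that under the ``$y$-stick points up'' hypothesis the $p$-case $z$-stick can be converted to $x$- and $y$-sticks at a net cost of only two sticks --- is exactly the step you defer (``the main obstacle is precisely this last step''), and no sub-case of it is actually checked. Everything else in your write-up (applying \fullref{not 2} to the other $z$-stick, counting $11+2+2=15$, applying $T^{-1}$ and quoting Huang--Yang) is routine bookkeeping that the paper also uses elsewhere; the lemma's entire content is the part you leave as a plan. Worse, there is a concrete reason to doubt that the plan goes through: the cheaper replacement works when the stick attached to the bottom-right corner can absorb (or be rerouted into) the final step of the $yxyx$ staircase, which is why the paper proves exactly your claimed net-$+2$ bound in \fullref{pre} for an attached $x$-stick or a $y$-stick pointing down or level. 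The upward $y$-stick is the direction in which that absorption fails --- the stick runs along the right edge of the square away from the staircase's terminal $x$-stick, with the two lower $w$-sticks on opposite sides of the diagonal --- and that is precisely why it is split off into the present lemma.

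The paper's own proof takes a different route entirely and does not attempt any replacement. It observes that the hypothesis forces a specific inventory of sticks: the $z$-stick, the attached upward $y$-stick, at least two sticks to connect that $y$-stick's far endpoint to a $w$-stick on the lower level, at least one stick joining the two lower-level $w$-sticks, and at least two sticks returning to the upper endpoint of the $z$-stick. This exhausts the budget of an $11$-stick polygon, so the possible configurations can be enumerated outright (the eight diagrams of \fullref{Cases}), and each visibly has fewer than five crossings; hence the knot is $3_1$, $4_1$, or the unknot, the last being excluded. If you want to rescue your approach, you must either carry out the collision-free net-$+2$ replacement in every configuration consistent with the hypothesis (which the geometry suggests will fail in at least some of them), or fall back on an enumeration of the forced diagrams as the paper does.
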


\begin{proof}
Suppose we have a $z$-stick in the situation described in the lemma. Then, note that we need the following sticks in our knot:
\begin{itemize}
    \item a $z$-stick, 
    \item a $y$-stick attached to the bottom end of the $z$-stick, 
    \item at least two sticks to attach the other endpoint of this $y$-stick to one of the $w$-sticks on the lower level, 
    \item at least one stick to attach the two $w$-sticks on the lower level, 
    \item at least two sticks to attach the lower level $w$-sticks back to the upper endpoint of the $z$-stick. 
\end{itemize}
These restraints give us the cases in \fullref{Cases}. 

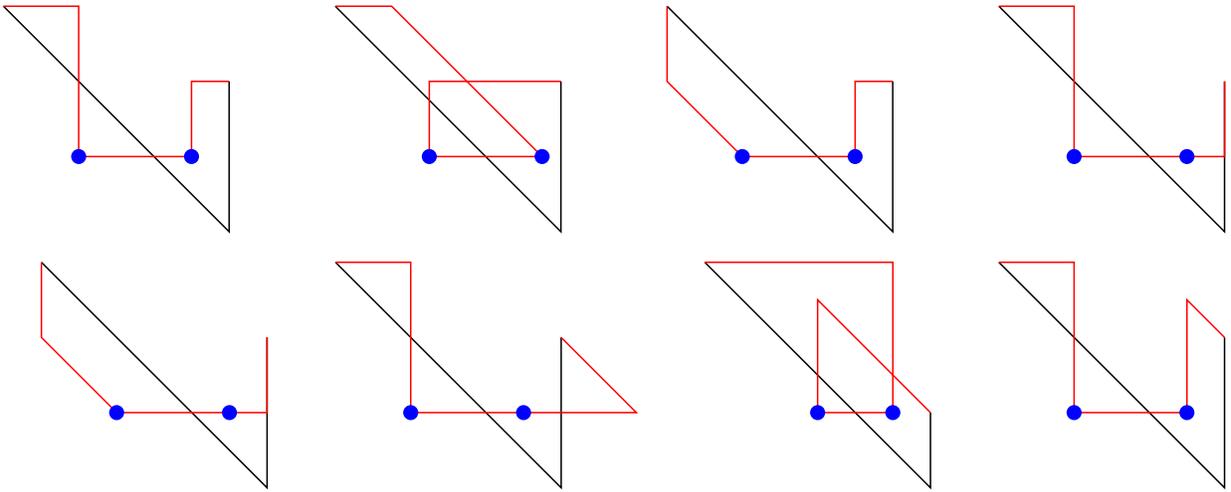
\begin{figure}[ht]
    \centering
    \subfigure{
    \begin{tikzpicture}
    \draw[line width=0.2mm] (0,0) -- ++(3,-3) -- ++(0,2);
    \draw[line width = 0.2mm, red] (3,-1)-- ++(-0.5,0) -- ++(0,-1)--++(-1.5,0) -- ++(0,2) --++(-1,0);
    \fill[blue] (1,-2) circle(.1);
    \fill[blue] (2.5,-2) circle(.1);
    \end{tikzpicture}
    }
    \hfill
    \subfigure{
    \begin{tikzpicture}
    \draw[line width=0.2mm] (0,0) -- ++(3,-3) -- ++(0,2);
    \draw[line width = 0.2mm, red] (3,-1)-- ++(-1.75,0) -- ++(0,-1) -- ++(1.5,0) -- ++(-2,2) --++(-0.75,0);
    \fill[blue] (1.25,-2) circle(.1);
    \fill[blue] (2.75,-2) circle(.1);
    \end{tikzpicture}
    }
    \hfill 
    \subfigure{
    \begin{tikzpicture}
    \draw[line width=0.2mm] (0,0) -- ++(3,-3) -- ++(0,2);
    \draw[line width = 0.2mm, red] (3,-1)-- ++(-0.5,0) -- ++(0,-1)--++(-1.5,0) -- ++(-1,1) --++(0,1);
    \fill[blue] (1,-2) circle(.1);
    \fill[blue] (2.5,-2) circle(.1);
    \end{tikzpicture}
    }
    \hfill 
    \subfigure{
    \begin{tikzpicture}
    \draw[line width=0.2mm] (0,0) -- ++(3,-3) -- ++(0,2);
    \draw[line width = 0.2mm, red] (3,-1)-- ++(0,-1) -- ++(-2,0) -- ++(0,2) --++(-1,0);
    \fill[blue] (1,-2) circle(.1);
    \fill[blue] (2.5,-2) circle(.1);
    \end{tikzpicture}
    }\\
    \hfill 
    \subfigure{
    \begin{tikzpicture}
    \draw[line width=0.2mm] (0,0) -- ++(3,-3) -- ++(0,2);
    \draw[line width = 0.2mm, red] (3,-1)-- ++(0,-1) -- ++(-2,0)--++(-1,1) -- ++(0,1);
    \fill[blue] (1,-2) circle(.1);
    \fill[blue] (2.5,-2) circle(.1);
    \end{tikzpicture}
    }
    \hfill 
    \subfigure{
    \begin{tikzpicture}
    \draw[line width=0.2mm] (0,0) -- ++(3,-3) -- ++(0,2);
    \draw[line width = 0.2mm, red] (3,-1)-- ++(1,-1) -- ++(-3,0) -- ++(0,2) --++(-1,0);
    \fill[blue] (1,-2) circle(.1);
    \fill[blue] (2.5,-2) circle(.1);
    \end{tikzpicture}
    }
    \hfill
    \subfigure{
    \begin{tikzpicture}
    \draw[line width=0.2mm] (0,0) -- ++(3,-3) -- ++(0,1);
    \draw[line width = 0.2mm, red] (3,-2) --++(-1.5,1.5) -- ++(0,-1.5) -- ++(1,0) -- ++(0,2) -- ++(-2.5,0);
    \fill[blue] (1.5,-2) circle(.1);
    \fill[blue] (2.5,-2) circle(.1);
    \end{tikzpicture}
    }
    \hfill
    \subfigure{
    \begin{tikzpicture}
    \draw[line width=0.2mm] (0,0) -- ++(3,-3) -- ++(0,2);
    \draw[line width = 0.2mm, red] (3,-1)-- ++(-0.5,0.5) -- ++(0,-1.5)--++(-1.5,0) -- ++(0,2) --++(-1,0);
    \fill[blue] (1,-2) circle(.1);
    \fill[blue] (2.5,-2) circle(.1);
    \end{tikzpicture}
    }
    \caption{The possible cases}
    \label{Cases}
\end{figure}

Note that in all of these, we have less than five crossings so it must be either the trefoil knot, the figure-eight knot, or the unknot (which would be a contradiction).
\end{proof}

\begin{lemma}\label{pre}
 Suppose we have an irreducible $11$-stick knot in the sh-lattice with four $w$-sticks. Also, suppose we have a $z$-stick in the $p$-case and the bottom right corner is attached to a $x$-stick or a $y$-stick with its other endpoint lower than or equal to the shared $y$-coordinate, then the knot must be either $3_1$ or $4_1$. 
\end{lemma}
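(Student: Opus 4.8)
The plan is to argue exactly as in the proof of \fullref{b}: under the stated hypotheses the polygon is so constrained that only finitely many $xy$-projections survive, each has at most four crossings, and hence the knot is $3_1$, $4_1$, or the unknot; since an irreducible $11$-stick polygon cannot be trivial, the knot must be $3_1$ or $4_1$.

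First I would pin down the rigid part of the picture. By \fullref{specificsticks} the four $w$-sticks are $w_{13}$, $w_{14}$, $w_{23}$, $w_{24}$, and since the distinguished $z$-stick lies in the $p$-case (\fullref{p-case}), three of these $w$-sticks occupy its square of replacement in the pattern of the first diagram in \fullref{3 w stick cases} --- one $w$-stick in the upper triangle above the $z$-stick and one on each side of it in the lower triangle --- leaving exactly one free $w$-stick. Combined with \fullref{diff w levels}, this determines the $w$-level of the $z$-stick and forces its bottom-right endpoint onto a strictly lower $w$-level than its top-left endpoint. Next I would split on the hypothesis: the bottom-right corner is attached, in the $xy$-projection (possibly across a $w$-stick), either to an $x$-stick or to a $y$-stick whose far endpoint is not higher than the shared $y$-coordinate; since a $y$-stick is never horizontal, in the second case the $y$-stick points strictly downward. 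In each case I would list the unavoidable sticks, as in \fullref{b} --- the $z$-stick, the attached $x$- or $y$-stick, at least two sticks routing its free end to one of the two lower $w$-sticks, at least one stick joining those two lower $w$-sticks, and at least two sticks returning from the upper $w$-stick to the top-left end of the $z$-stick --- and count against the $(3,2,2)$ budget of \fullref{stickbound}. With essentially no slack, each case expands to a short explicit list of projections to be displayed in a figure.

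A useful shortcut to trim the enumeration: if the \emph{other} $z$-stick falls in the one-stick case of \fullref{z reduction lemma}, then reducing the $p$-case $z$-stick (net gain three) and the other $z$-stick (net gain one) yields a cubic-lattice polygon with at most fifteen sticks, which by the Huang--Yang theorem is $3_1$ or $4_1$; so only the genuinely tight configurations, in which both $z$-reductions are expensive, need to be drawn. The main obstacle I anticipate is twofold: certifying that the list of closing-up patterns is exhaustive --- the subtle points being overlaps in the $xy$-projection when an $x$-stick rather than a $y$-stick is attached, and $w$-sticks lying between projected sticks --- and then checking in every surviving diagram that the crossing number does not exceed four. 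Once those diagrams are in hand the conclusion is immediate, since the only knots with fewer than five crossings are the unknot, $3_1$, and $4_1$, and irreducibility rules out the unknot.
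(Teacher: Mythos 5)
Your plan diverges from the paper's proof in a way that leaves a real gap. The paper does not redo the diagram enumeration of \fullref{b} here; instead it exploits the hypothesis to make the $z$-reduction \emph{cheaper}: when the attached stick is an $x$-stick, the final $x$ of the $yxyx$ replacement sequence from \fullref{z reduction lemma} is absorbed into the preexisting $x$-stick, and when it is a downward (or level) $y$-stick, the $y$- and $z$-sticks are replaced together by a $yxyx$ or $yxy$ sequence (with further subcases, including falling back to \fullref{b} when the chain continues into another $z$-stick, and an $R$-move argument when a $w$-stick obstructs the replacement). Either way the $p$-case stick costs a net gain of at most \emph{two} sticks, and since by \fullref{not 2} the other $z$-stick is in the one- or two-stick case it costs at most two more, giving a cubic-lattice polygon with at most fifteen sticks and hence $3_1$ or $4_1$ by Huang--Yang. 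Your shortcut only recovers this when the other $z$-stick happens to be in the one-stick case ($11+3+1=15$); in the two-stick case your reduction route gives $11+3+2=16$ sticks, where Huang--Yang no longer applies, and this is exactly the situation you defer to an enumeration ``as in \fullref{b}.''

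That enumeration is the missing content, and it is not automatic. The tightness in \fullref{b} is driven by the hypothesis that the attached $y$-stick points strictly \emph{upward}, which forces at least two sticks to route its free end back to a lower $w$-stick and pins down the short list of closing patterns in \fullref{Cases}. With an attached $x$-stick or a downward/level $y$-stick, the free end can sit much closer to the lower $w$-sticks, the ``unavoidable stick'' count drops, slack appears, and the family of admissible projections is larger; you acknowledge the exhaustiveness and crossing-count checks as ``the main obstacle'' but do not carry them out, so the key claim that every surviving diagram has at most four crossings is unsupported. (Two smaller points: a $z$-stick lies in a single $w$-level, so its two endpoints are at the same height -- your claim that the bottom-right endpoint sits on a strictly lower $w$-level than the top-left one is not meaningful; and dismissing the ``equal $y$-coordinate'' alternative as vacuous skips precisely the subcases the paper treats at length, including the one that loops back to \fullref{b}.) To complete your route you would need to actually produce and verify the exhaustive list of tight configurations for the $x$-stick and downward/level $y$-stick attachments, or else adopt the paper's cheaper-replacement argument.
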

\begin{proof}
In the first case, we can follow the procedure in \fullref{z reduction lemma} to replace the $z$-stick with a $yxyx$ sequence of sticks. Because the last stick in this sequence is a $x$-stick, we can combine it with the preexisting $x$-stick so we only have a net gain of two sticks, as we can see in the first case of \fullref{Cases2}.
Thus, we get a knot in the cubic lattice with at most fifteen sticks so it must either be the trefoil knot or the figure-eight knot.

In the second case, where the $y$-stick has an endpoint lower than the $y$-level of the shared $y$-coordinate, we can replace the $z$- and $y$-sticks with a $yxyx$ sequence, as we can see in case $2$ of \fullref{Cases2}, adding four sticks but removing two for a net gain of two sticks. 

Now suppose the $y$-stick connected to the bottom right corner of the $z$-stick has an endpoint at the same $y$-level as the shared $y$-coordinate.

If the other endpoint of the $y$-stick is connected to one of our two $w$-sticks in the lower portion of the $p$-case, then we can replace both the $y$- and $z$-stick with a $yxy$ sequence of sticks as we can see in case $3$ of \fullref{Cases2}. Otherwise, we have three cases depending on what stick the $y$-stick is connected to:

If its other endpoint is a $y$-stick, we can either combine them or do an $R$-move to change the endpoint so it is no longer equal to the two lower $w$-sticks and we can refer to an earlier case. 

If this endpoint is a $z$-stick, we can scale the knot by a factor of $2$. Then, move the arc from this $z$-stick until the next $y$-stick upwards by one. Now, we are in the \fullref{b} case. 

If the endpoint is connected to an $x$-stick, then this endpoint must be connected to the rightmost $w$-stick. Then, we can once again replace the $z$-stick, $y$-stick, and $x$-stick with a $yxy$ sequence of sticks as in case $4$ of \fullref{Cases2}.

Note that if there is a $w$-stick in the way of these replacements, then we need to do an $R$-move to place the sticks on the same $w$-level. There cannot be a stick obstructing this $R$-move as this would require at least two more sticks to have a stick intersecting one such $R$-move which would exceed the total of eleven sticks.

Thus, we have a net gain of at most two sticks. The other $z$-stick is in the one- or two $w$-stick case by \fullref{not 2} so we can replace it with at most two sticks as well. Thus, we have a knot in the cubic lattice with at most fifteen sticks so it is a trefoil knot or a figure-eight knot. 
\end{proof}

\begin{figure}[ht]
    \centering
    \includegraphics[width=7cm]{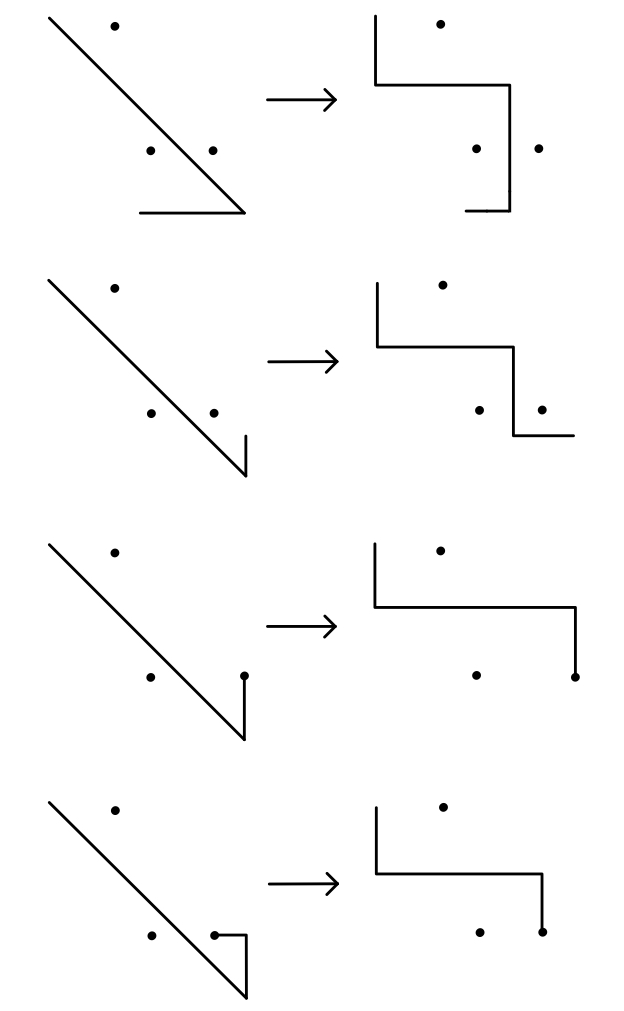}
    \caption{The cases from \fullref{pre} in which a $z$-stick is replaced}
    \label{Cases2}
\end{figure}

\begin{lemma}\label{2least}
The only non-trivial $11$-stick polygons $\mathcal P$ with four $w$-sticks satisfying 
$$\min\{|\mathcal P|_x,|\mathcal P|_y,|\mathcal P|_z\} = 2$$
have knot type $3_1$ or $4_1$. 
\end{lemma}

\begin{proof}
When we have a knot in the $(3, 2, 2)$ case, by \fullref{diff w levels}, the $z$-sticks are on different levels. So, we can apply the procedures from \fullref{z reduction lemma} to create a presentation of the knot in the cubic lattice with less than sixteen sticks unless one or both of our $z$-sticks is in the $p$-case defined in \fullref{p-case}. However, from \fullref{a}, \fullref{b}, and \fullref{pre}, we showed that a non-trivial knot with a stick in the $p$-case is either $3_1$ or $4_1$. 
\end{proof}

Finally, we will consider the case where a $11$-stick polygon has five $w$-sticks. 

\begin{lemma}\label{5 w}
The only non-trivial $11$-stick polygons $\mathcal P$ with five $w$-sticks are $3_1$ and $4_1$. 
\end{lemma}
\begin{proof}
    Let $\mathcal{P}$ be a non-trivial $11$-stick polygon with five $w$-sticks. Then, we have three cases, namely the cases $(4,1,1)$, $(3,2,1)$, or $(2,2,2)$ with notation as in the four $w$-stick case. Note that in all cases, we have at most two sticks on every $w$-level. We will first establish that we can replace any $z$-stick with the net addition of at most three $x$- or $y$-sticks in this scenario. 
    
    If a $z$-stick is the only stick on its level, then there can be at most three $w$- sticks in its square of replacement. So, by \fullref{z reduction lemma}, we can replace it with a net addition of three sticks. Now, we will consider the case where a $z$-stick is connected to another stick, which we will assume is an $x$-stick without loss of generality. Observe that we can replace the intersection of the $z$-stick and the $x$-stick with a small $y$-stick, as illustrated in \fullref{add-y}, similarly to what was done in \fullref{reduceedge}. In particular, let the length of the $y$-stick be small enough that the rectangle with the sides given by the $x$-stick and this $y$-stick contains no $w$-sticks. Then, looking at the remaining $z$-stick, we only have three $w$-sticks in the square of replacement, so we can apply the procedure from \fullref{z reduction lemma}. If the $y$-stick we added is connected to a another $y$-stick, we can combine them, meaning that we successfully replaced the $z$-stick with a net addition of at most three sticks. If not, then it is connected to another $x$-stick. In this case, note that we chose the $y$-stick we inserted to be sufficiently small that we can perform an $R$-move which reduces the number of sticks by one. So, we can replaced the $z$-stick with a net addition of at most three sticks. 

    \begin{figure} [ht]
    \centering
    \begin{tikzpicture}
        \draw[line width = 0.4mm] (7.5,-0.5) --++(4.5,-4.5);
        \draw[line width = 0.4mm] (7.5, 0) --++(1.5, 0);
        \draw[line width = 0.4mm] (7.5, 0) --++(0, -0.5);
        \fill[black!100] (2,0) circle(.08);
        \fill[black!100] (3.5,-1) circle(.08);
        \fill[black!100] (4.5,-3) circle(.08);
        \fill[black!100] (1.5,-3) circle(.08);
        \draw[line width = 0.5mm, ->] (5,-2.5) -- ++ (1,0);
        \draw[line width = 0.4mm] (0,0) --++(5,-5);
        \draw[line width = 0.4mm] (0, 0) --++(2, 0);
        \fill[black!100] (9,0) circle(.08);
        \fill[black!100] (10.5,-1) circle(.08);
        \fill[black!100] (11.5,-3) circle(.08);
        \fill[black!100] (8.5,-3) circle(.08);
    \end{tikzpicture}
    \caption{Addition of $y$-stick}
    \label{add-y}
    \end{figure}
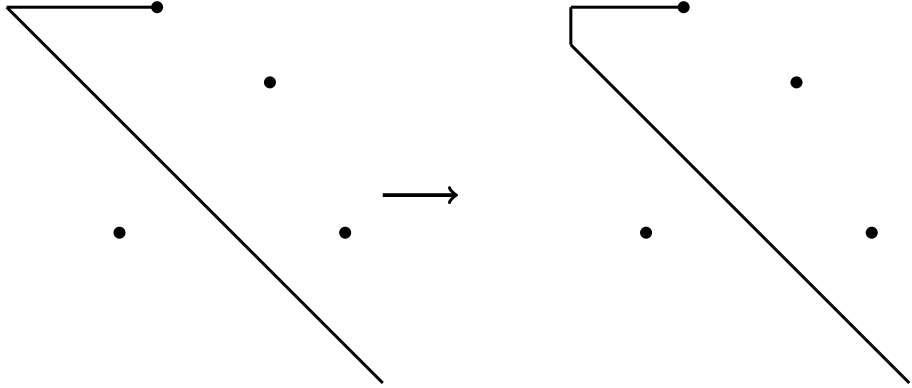
    
    Now, observe that in the cases $(4,1,1)$ and $(3,2,1)$, we can replace the $z$-stick with at most three $x$- or $y$-sticks, allowing us to create a presentation in the cubic lattice with at most fourteen sticks. In the $(2,2,2)$ case, we can assume without loss of generality that we have a $z$-stick on the $w$-level $1$. We can replace this stick with two sticks, resulting in a net addition of one stick. We can replace the other $z$-stick with a net addition of three sticks. So, we can create a polygon in the cubic lattice with at most fifteen sticks. So, in all cases, the knot is either $3_1$ or $4_1$. 
\end{proof}

We have now discussed all the possible cases for an irreducible $11$-stick polygon in the sh-lattice. Collecting the properties we proved from \fullref{stickbound}, \fullref{1least}, \fullref{2least}, and \fullref{5 w} above, we may state the desired theorem.

\begin{theorem}\label{finaltheorem}
In the sh-lattice, the only non-trivial $11$-stick knots are $3_1$ and $4_1$. 
\end{theorem}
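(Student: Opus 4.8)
The plan is to synthesize the structural results built up over this section; at this point essentially no new geometric argument is required. First I would recall that a non-trivial $11$-stick knot type must be realized by an \emph{irreducible} $11$-stick polygon: by Theorem~3 of \cite{bailey2015stick} every non-trivial sh-lattice knot needs at least eleven sticks, so an $11$-stick conformation of a non-trivial knot is automatically minimal, hence irreducible, and we may also assume it is properly leveled in $w$. Thus it suffices to show that every non-trivial irreducible $11$-stick polygon $\mathcal P$ represents $3_1$ or $4_1$, and conversely that both types are realizable with eleven sticks.

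By \fullref{4stick4level} such a $\mathcal P$ has exactly four $w$-sticks, so $|\mathcal P|_x + |\mathcal P|_y + |\mathcal P|_z = 7$, and by \fullref{stickbound} the unordered triple $(|\mathcal P|_x,|\mathcal P|_y,|\mathcal P|_z)$ is one of $(4,2,1)$, $(3,3,1)$, or $(3,2,2)$; no triple with all three parts $\ge 3$ can occur since $3+3+3 = 9 > 7$. Splitting on $m := \min\{|\mathcal P|_x,|\mathcal P|_y,|\mathcal P|_z\}$: the cases $(4,2,1)$ and $(3,3,1)$ have $m = 1$, so \fullref{1least} forces $\mathcal P$ to be $3_1$ or $4_1$; the case $(3,2,2)$ has $m = 2$, so \fullref{2least} gives the same conclusion. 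As $m \ge 3$ is impossible, these cases are exhaustive.

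For the converse, \fullref{4-1knot} exhibits an explicit $11$-stick conformation of $4_1$ (indeed $s_{sh}(4_1) = 11$), and the trefoil $3_1$ has an $11$-stick sh-lattice conformation by \cite{bailey2015stick} and \cite{mann2012stick}. Together with the previous paragraph this pins down the non-trivial $11$-stick knots as exactly $3_1$ and $4_1$.

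The genuinely hard part has already been dispatched in the earlier lemmas --- the delicate case analyses of \fullref{5w_stick}, \fullref{z reduction lemma}, \fullref{4wstick z reduction}, and \fullref{a} through \fullref{pre}, which feed \fullref{1least} and \fullref{2least}. What remains here is only to verify that the trichotomy of \fullref{stickbound} together with \fullref{1least} and \fullref{2least} genuinely covers every case (which reduces to the arithmetic observation $|\mathcal P|_x + |\mathcal P|_y + |\mathcal P|_z = 7$) and that the two realizing conformations use eleven and not fewer sticks (immediate from the lower bound). So the proof is a short assembly of \fullref{stickbound}, \fullref{1least}, and \fullref{2least}.
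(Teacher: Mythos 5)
Your proposal matches the paper's own argument: the paper proves \fullref{finaltheorem} precisely by assembling \fullref{stickbound}, \fullref{1least}, and \fullref{2least} to cover the $(4,2,1)$, $(3,3,1)$, and $(3,2,2)$ cases, with realizability of $4_1$ from \fullref{4-1knot} and of $3_1$ from the cited constructions. Your added remarks (minimality implies irreducibility, and the arithmetic exhaustiveness of the triples summing to $7$) are correct fill-ins of steps the paper leaves implicit, so this is essentially the same proof.
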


\section{Future Work}

We know from \cite{huang2017lattice} that every knot type $[K]$ with crossing number $c[K]\ge 5$ has stick number $s_L[K]\ge 16$ in the cubic lattice. Therefore, it is possible to restrict the bound in \fullref{finaltheorem} even further. One can easily conclude from \fullref{51knot} and \fullref{52knot} that both $5_1$ and $5_2$ have stick number at most $14$. 
\begin{figure}[ht]
    \centering
    \subfigure[$14$-stick $5_1$ projection]{\includegraphics[width = .45\textwidth]{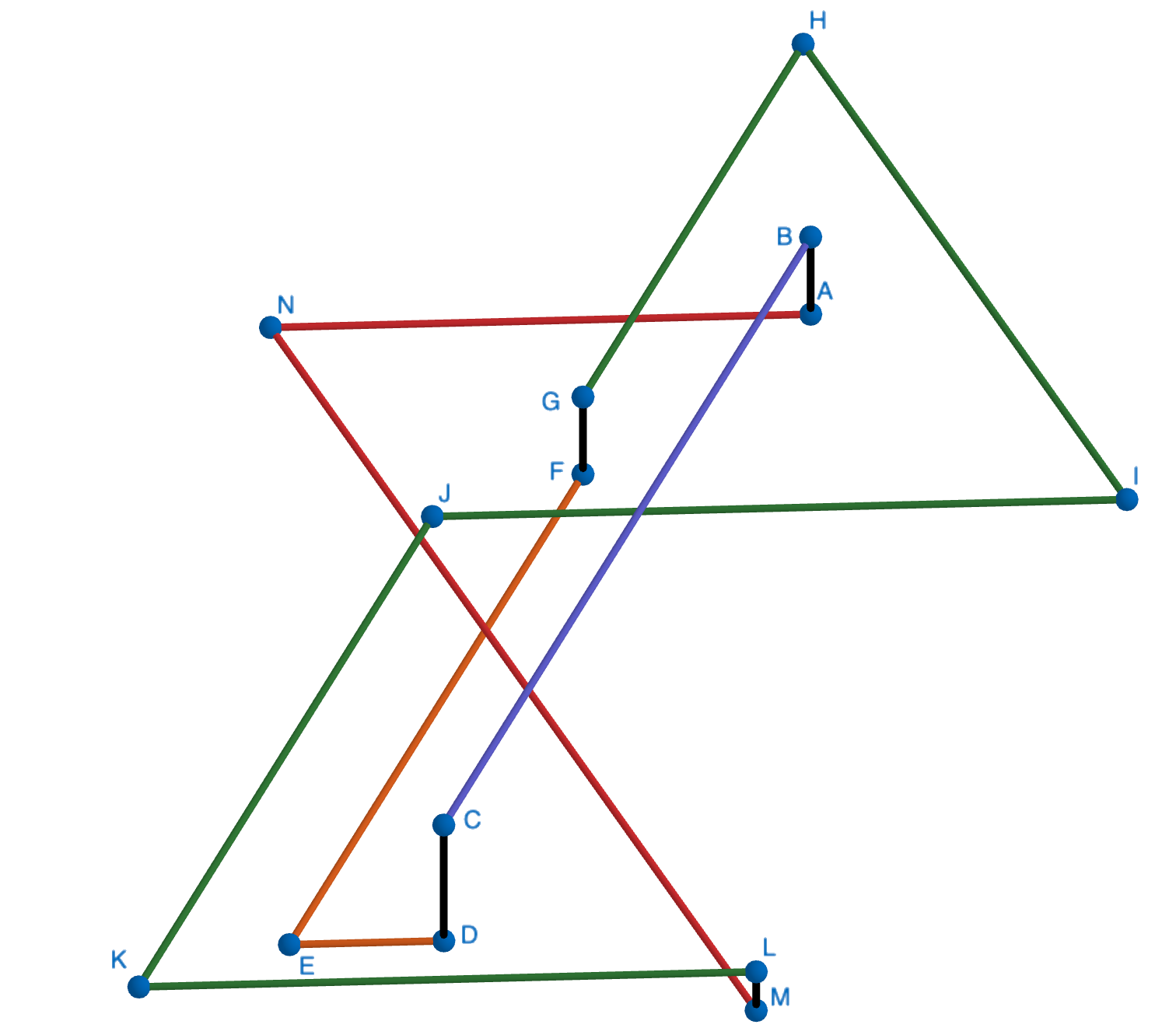}\label{51knot}}
    \subfigure[$14$-stick $5_2$ projection]{\includegraphics[width = .45\textwidth]{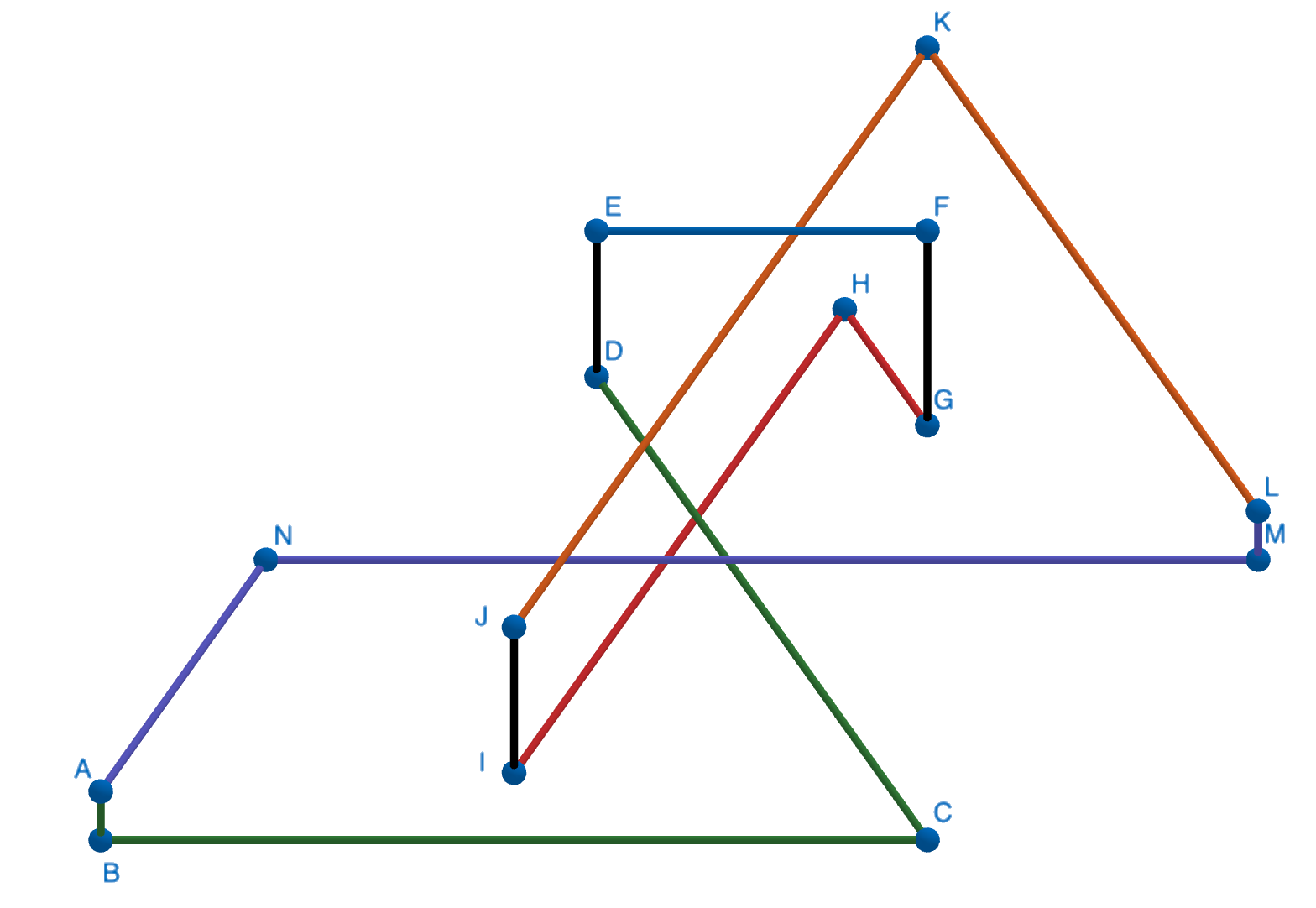}\label{52knot}}
    \caption{The stick numbers of $5_1$ and $5_2$ are at most $14$}
    \label{14stick}
\end{figure}
Hence, we have the following conjecture. 

\begin{conjecture}
The only non-trivial knot types $[K]$ with stick number $s_{sh}[K] \le 13$ in the cubic lattice are $3_1$ and $4_1$. In particular, $s_{sh}(5_1) = s_{sh}(5_2) = 14$. 
\end{conjecture}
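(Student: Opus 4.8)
The plan is to extend the case analysis of Section~\ref{specific knots} from $11$ sticks to $12$ and $13$ sticks, and then to read off the claimed values of $s_{sh}(5_1)$ and $s_{sh}(5_2)$ as an immediate corollary. (The statement is meant relative to the sh-lattice throughout; the $s_{sh}$ notation makes this clear.) The backbone is the dichotomy already used for $11$-stick polygons: take a minimal — hence irreducible — sh-lattice presentation $\mathcal P$ of a non-trivial knot type $[K]$ with $|\mathcal P|=s_{sh}[K]\in\{11,12,13\}$. Any stick-reducing move found in the analysis (an $R$-move as in \fullref{R-move}, or pushing a stick between $w$-levels) contradicts minimality, so the sub-cases where such a move exists are vacuous; in every surviving sub-case one shows that, after a favorable rotation, all $z$-sticks can be eliminated via the square-of-replacement procedure (\fullref{z reduction lemma}, \fullref{4wstick z reduction}), producing a cubic-lattice conformation. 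The quantitative target for each $n$ is that the total net number of sticks added while clearing all $z$-sticks is at most $15-n$, so that $T^{-1}$ yields a cubic presentation with at most $15$ sticks, which by \cite{huang2017lattice} must be $3_1$ or $4_1$.

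First I would redo the $w$-stick bookkeeping, as in Propositions~\ref{stickequallevel}--\ref{5stickcase} and Corollaries~\ref{4stick4level}--\ref{specificsticks}. For $n\in\{12,13\}$ the number of $w$-sticks lies between $4$ (Lemma~1.4 of \cite{mann2012stick}) and $\lfloor n/2\rfloor=6$, and equals the number of $w$-levels by \fullref{stickequallevel}; I would enumerate the admissible $w$-stick diagrams, using Lemmas~1.2 and~1.3 of \cite{mann2012stick} to discard configurations forced to be unknots or reducible, exactly as was done for five $w$-sticks on eleven strands in \fullref{5w_stick}. Next I would prove a rotation lemma, the analogue of \fullref{stickbound}: among the three planar directions $x,y,z$, one can be chosen so that it carries few sticks and its sticks occupy squares of replacement containing only the cheap configurations of \fullref{z reduction lemma}. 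Feeding those squares through the reduction procedure and applying $T^{-1}$ then finishes each case via \cite{huang2017lattice}. The ``in particular'' clause is then immediate: \fullref{51knot} and \fullref{52knot} exhibit $14$-stick sh-lattice conformations of $5_1$ and $5_2$, so $s_{sh}(5_1),s_{sh}(5_2)\le 14$; since neither knot is $3_1$ or $4_1$, the classification forces $s_{sh}(5_1),s_{sh}(5_2)\ge 14$, hence equality.

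The hard part will be the $n=13$ case, where the budget $15-n=2$ is extremely tight: there is essentially no room to absorb a $z$-stick in the expensive ``$p$-case'' of \fullref{p-case}, and at most one $z$-reduction may cost a net $+1$ while every other residual $z$-stick must be cleared for free, i.e. the new $x$- or $y$-stick must merge with an adjacent stick. Establishing this requires a substantially larger case split than Section~\ref{specific knots}: there are more $w$-stick/$w$-level patterns (up to six levels) and more stick-type distributions per level, and within each one must show either that a clean rotation with at most $2$ residual $z$-stick cost exists, or that a stick-reducing move is available (contradicting minimality). I expect the enumeration of these $w$-stick diagrams — together with the analogues of the delicate geometric sub-case arguments in Lemmas~\ref{a}--\ref{pre} — to be the genuine bottleneck, likely demanding either a careful hand analysis or a computer-assisted enumeration; the geometric primitives ($T^{-1}$, square-of-replacement reductions, $R$-moves) are already in place and transfer verbatim.

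A final, minor obligation is to verify that the conformations drawn in \fullref{14stick} are bona fide sh-lattice knot presentations — non-self-intersecting and properly leveled in $w$ — and that they indeed represent $5_1$ and $5_2$ respectively; this is routine but is needed for the equality $s_{sh}(5_1)=s_{sh}(5_2)=14$ to follow from the classification.
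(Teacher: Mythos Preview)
The statement you are attempting to prove is presented in the paper as a \emph{Conjecture} in the Future Work section; the paper does not claim a proof and offers none. So there is no ``paper's own proof'' to compare against, and your proposal should be read on its own merits.

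What you have written is not a proof but a strategy outline, and you say as much: the core step for $n=13$---showing that every irreducible minimal $13$-stick sh-polygon admits a rotation in which all $z$-sticks can be cleared at total net cost at most $2$---is deferred to ``a careful hand analysis or a computer-assisted enumeration'' that you do not carry out. That is precisely the content of the conjecture; the geometric primitives you cite (\fullref{z reduction lemma}, \fullref{4wstick z reduction}, $R$-moves, $T^{-1}$) are already available and were already pushed to their limit in the $11$-stick classification, where the budget was $15-11=4$. Shrinking the budget to $2$ is not a matter of reusing those lemmas verbatim: in the $p$-case a single $z$-stick already costs $+3$, and even the generic two-$w$-stick case costs $+2$, so for $n=13$ you would need to rule out essentially every configuration in which more than one $z$-stick survives any rotation, or in which a single $z$-stick lands in anything worse than the one-$w$-stick case without a mergeable neighbor. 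None of that is established.

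Your plan is a reasonable line of attack and the ``in particular'' deduction from the classification plus \fullref{14stick} is correct \emph{conditional} on the classification, but as written the proposal has the same status as the paper's conjecture: the decisive case analysis is identified but not done.
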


Similarly, we have the following conjecture. This, however, likely requires improved lower bounds for $6$-crossing knots in the cubic lattice before it can be seriously approached.

\begin{conjecture}
For a knot type $[K]$ such that $s_{sh}[K] \leq 14$, then $c[K] \leq 5$, where $c[K]$ is the crossing number of a knot type $[K]$.
\end{conjecture}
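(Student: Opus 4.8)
The plan is to argue by contraposition: assuming $c[K] \ge 6$, I would show $s_{sh}[K] \ge 15$. Fix an irreducible sh-lattice conformation $\mathcal P$ of $[K]$ realizing $s_{sh}[K]$, properly leveled in $w$, and suppose for contradiction that $|\mathcal P| \le 14$. Two structural facts come for free. First, by Theorem 1 of \cite{bailey2015stick}, $5b[K] \le s_{sh}[K] \le 14$, so $b[K] \le 2$; hence $[K]$ is a $2$-bridge (rational) knot, which already removes every $6$- and $7$-crossing knot of bridge number at least $3$ and isolates the two-bridge families as the genuine difficulty. Second, by Lemma 1.4 of \cite{mann2012stick}, $|\mathcal P|_w \ge 4$, so $|\mathcal P|_x + |\mathcal P|_y + |\mathcal P|_z \le 10$; rotating $\mathcal P$ by a multiple of $60^\circ$ about the vertical axis (a symmetry of the sh-lattice that cyclically permutes the three horizontal stick directions and does not disturb the $w$-levels or irreducibility) we may assume $z$ is the least-used horizontal direction, whence $|\mathcal P|_z \le \lfloor (|\mathcal P| - |\mathcal P|_w)/3 \rfloor \le 3$, with the bound improving when $|\mathcal P|_w \ge 5$.

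Next I would push the $z$-stick reduction machinery of \fullref{specific knots} from at most two $z$-sticks (as in the $11$-stick classification) to at most three. Applying $T^{-1}$ and replacing the $z$-sticks one at a time via \fullref{z reduction lemma} and \fullref{4wstick z reduction} produces a cubic-lattice conformation $\mathcal P_L$ of $[K]$; the crude bound $|\mathcal P_L| \le |\mathcal P| + 3\,|\mathcal P|_z \le 23$ is far too lossy, so the real work is to replay \fullref{not 2}, \fullref{a}, \fullref{b}, and \fullref{pre} in the three-$z$-stick setting: show that the costly $p$-case of \fullref{p-case} occurs for at most one of the three $z$-sticks, that a $z$-stick attached to another $z$-stick or to a favorably oriented $x$- or $y$-stick reduces with net gain at most two, and that overlapping squares of replacement can be resolved simultaneously for a combined saving rather than summing the individual costs. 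The target is an explicit bound $s_L[K] \le N$ with $N$ driven as low as the case analysis allows.

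Finally, I would invoke a classification/lower-bound input in the cubic lattice: if every cubic-lattice knot with at most $N$ sticks has $c \le 5$, the contradiction is immediate. Huang and Yang \cite{huang2017lattice} supply this for $N = 15$ (there in fact $c \le 4$), so if Step 2 can be driven to $s_L[K] \le 15$ the argument closes for all of $[K]$ at once; otherwise one needs the currently-unavailable statement ``$c[K] \ge 6 \Rightarrow s_L[K] \ge N+1$'' for the relevant $N$, strengthening the known ``$c[K] \ge 5 \Rightarrow s_L[K] \ge 16$''.

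The main obstacle is exactly this last gap, and it is genuine. The additive $z$-reduction loses roughly three sticks per $z$-stick, and even with the full $p$-case analysis it is unclear one can beat $s_L[K] \le 18$ or so, whereas no published bound comes close to ruling out $6$-crossing cubic knots with that many sticks (the best general lower bounds are much weaker). A complete proof therefore appears to require either a genuinely sharper, global $z$-reduction that keeps the cubic stick count at $15$, or new lower bounds on the cubic-lattice stick numbers of $6$-crossing knots; since $5b[K] \le s_{sh}[K]$ already disposes of the cases $b[K] \ge 3$, the essential remaining difficulty is confined to $2$-bridge knots of crossing number $6$ and $7$, where a direct sh-lattice lower bound tailored to rational knots is another avenue worth exploring.
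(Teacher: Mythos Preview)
The statement you are attempting to prove is a \emph{conjecture}, explicitly labeled as such in the paper's Future Work section; the paper provides no proof. Indeed, the authors write that this conjecture ``likely requires improved lower bounds for $6$-crossing knots in the cubic lattice before it can be seriously approached.'' So there is no paper proof to compare against.

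Your proposal is not a proof but a candid outline of a strategy together with an honest diagnosis of why it does not close. That diagnosis is correct and matches the paper's own assessment: the $z$-stick reduction machinery of \fullref{specific knots}, even pushed to three $z$-sticks and with all the $p$-case refinements, does not plausibly land you at $s_L[K] \le 15$, and no published result supplies the needed input ``$c[K] \ge 6 \Rightarrow s_L[K] \ge N+1$'' for the $N$ you can actually reach. Your observation that $5b[K] \le s_{sh}[K] \le 14$ forces $b[K] \le 2$ is a nice reduction the paper does not make explicit, and it does localize the difficulty to rational knots; but it does not by itself yield a bound on $c[K]$, since there are $2$-bridge knots of arbitrarily large crossing number.

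In short: your approach is reasonable and your identification of the gap is accurate, but this is genuinely an open problem, and nothing in your outline (or in the paper) closes it.
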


Another potential direction to work on in the future is to further investigate the relationships between the number of $w$-sticks and the number of sticks in general. This may help us classify sh-lattice knots with a specific number of sticks, as it is heavily involved in the proof of \fullref{specific knots} for instance. 

\begin{problem}
For a properly leveled polygon $\mathcal P$ of type $[K]$, construct upper and lower bounds on the number of $w$-sticks, both in terms of stick number $s_{sh}[K]$ and in terms of crossing number $c[K]$.
\end{problem}

As a call back to \fullref{4.10}, we also have the following direction for further research.

\begin{problem}
Improve the lower bound on $e_{sh}$ in terms of $e_L$, and find a lower bound on $s_{sh}$ in terms of $s_L$. 
\end{problem}

\section*{Acknowledgements}

Many thanks to the organizers of \href{https://geometrynyc.wixsite.com/polymathreu}{Polymath Jr.} who made it possible for this group to meet and do this work.

\printbibliography

\end{document}